\theoremstyle{plain}
\newtheorem{lem}{Lemma}[subsection]
\newtheorem{prop}[lem]{Proposition}
\newtheorem{thm}[lem]{Theorem}
\newtheorem{cor}[lem]{Corollary}
\theoremstyle{definition}
\newtheorem{const}{Construction}[subsection]
\newtheorem{defi}[const]{Definition}
\theoremstyle{remark}
\newtheorem*{remark}{Remark}
\newtheorem{example}{Example}[subsection]
\newtheorem{nota}{Notation}[section]\newcommand{\trop}{\textnormal{trop}}
\numberwithin{equation}{section}
\newcommand{\legval}{\operatorname{legval}}
\newcommand{\HS}{\operatorname{HS}}
\newcommand{\VS}{\operatorname{VS}}
\newcommand{\bbG}{\mathbb{G}}
\newcommand{\bbP}{\mathbb{P}}
\newcommand{\bbQ}{\mathbb{Q}}
\newcommand{\bbR}{\mathbb{R}}
\newcommand{\bbZ}{\mathbb{Z}}
\newcommand{\cM}{\mathcal{ M}}
\newcommand{\cX}{\mathcal{ X}}
\newcommand{\cY}{\mathcal{ Y}}
\newcommand{\op}{\textnormal{op}}
\newcommand{\Hom}{\operatorname{Hom}}
\newcommand{\Top}{\operatorname{Top}}
\newcommand{\Lin}{\operatorname{Lin}}
\newcommand{\rk}{\operatorname{rk}}
\newcommand{\Div}{\operatorname{Div}}
\newcommand{\val}{\operatorname{val}}
\newcommand{\Rat}{\operatorname{Rat}}
\newcommand{\Mtrop}{\operatorname{M}^{\trop}}
\renewcommand{\Im}{\operatorname{Im}}
\newcommand{\rank}{\operatorname{rk}}
\newcommand{\ord}{\operatorname{ord}}
\renewcommand{\div}{\operatorname{div}}
\newcommand{\POIC}{\operatorname{POIC}}
\newcommand{\ST}{\operatorname{ST}}
\newcommand{\st}{\operatorname{st}}
\newcommand{\est}{\operatorname{est}}
\newcommand{\bbAC}{\mathbb{A}\mathbb{C}}
\newcommand{\bbEAC}{\mathbb{E}\mathbb{A}\mathbb{C}}
\newcommand{\src}{\textnormal{src}}
\newcommand{\trgt}{\textnormal{trgt}}
\newcommand{\ft}{\operatorname{ft}}
\newcommand{\tint}{\textnormal{int}}
\newcommand{\pspcs}{\textnormal{s}}
\newcommand{\pcmplxs}{\textnormal{c}}
\newcommand{\AC}{\mathrm{AC}}
\newcommand{\EST}{\mathrm{EST}}
\newcommand{\Eff}{\mathrm{Eff}}
\newcommand{\Subd}{\operatorname{Subd}}
\newcommand{\mt}{\textnormal{mult}}
\title{Enumeration of Weierstrass points of metric graphs.}
\author{Diego A. Robayo Bargans}
\address{FB Mathematik, RPTU Kaiserslautern, 67663 Kaiserslautern, Germany}
\email{robayo@mathematik.uni-kl.de}
\begin{document}

\begin{abstract}
    A classical result states that on a smooth algebraic curve of genus $g$ the number of Weierstrass points, counted with multiplicity, is $g^3-g$. In this paper, we introduce the notion of geometric Weierstrass points of metric graphs and show that a generic metric graph of genus $g$ has $g^3-g$ geometric Weierstrass points counted with multiplicity. Our methods also provide a new proof of the existence of Weierstrass points on metric graphs of genus bigger than or equal to $2$.
\end{abstract}
\subjclass{14T15, 14T20. }

\maketitle
\setcounter{tocdepth}{2}
\section{Introduction}

The divisor theory of discrete and metric graphs has been developed in close parallel to its classical algebrogeometric analog, so that several results and constructions mimic their classical counterpart (see \cite{BakerSpecialization}, \cite{BakerNorine}, \cite{GathmannKerberARRTITG} \cite{MikhalkinZharkovTCTJATF}). Following this mirroring spirit the corresponding notion of Weierstrass points for discrete and metric graphs was introduced by Baker in \cite{BakerSpecialization}. Namely, for a metric graph $\Gamma$ of genus $g$, a point $p\in \Gamma$ is a \emph{Weierstrass point} if the divisor $g\cdot [p]$ has Baker-Norine rank greater than or equal to $1$. The classical enumerative result states that on a smooth algebraic curve of genus $g$ the number of Weierstrass points, counted with multiplicity, is $g^3-g$. Such result does not hold in this setting, where, in contrast with the classical situation, there are usually infinitely many Weierstrass points on a given metric graph. In this paper, we introduce a special kind of Weierstrass points on metric graphs, geometric Weierstrass points, and make use of the technical framework from \cite{DARB1} and \cite{DARB2} to obtain a tropical intersection theoretic proof of the existence of Weierstrass points on metric graphs. Additionally, we also the obtain the corresponding enumerative statement with these geometric Weierstrass points in a similar vein to that of Theorem 4.3.3 of \cite{DARB2}. To be precise, we show that: \emph{A generic genus $g$ metric graph has $g^3-g$ geometric Weierstrass points counted with multiplicity}. 

In what follows, by a tropical curve is meant a metric space given by a discrete graph (possibly with legs) with a metric, and by a metric graph is meant a compact tropical curve. We consider discrete admissible covers of marked trees in order to obtain divisors of positive rank on metric graphs. More specifically, a discrete admissible cover of a marked tree gives rise to a tropical admissible cover of a metric tree, and the pullback through a tropical admissible cover of any degree-$1$ effective divisor on the metric tree produces a divisor with rank at least $1$ on the source. This procedure is related to Weierstrass points when we consider discrete admissible covers where the degree coincides with the genus of the source and there is a leg of the source whose weight equals the degree. In this set-up, the vertex incident to the latter leg will determine a Weierstrass point of the metric graph arising from the source, and we refer to Weierstrass points that arise in this way as geometric Weierstrass points. To further study the metric graphs that have geometric Weierstrass points, and how many there could be, we are directly prompted towards the framework of \cite{DARB2} and we use a special case of Theorem 4.1.1 of loc. cit. to deduce our enumerative statement.

In more detail, we will consider the poic-spaces $\Mtrop_g$ and $\AC_{g,0,\vec{\mu}}$ that parameterize, respectively, all genus-$g$ metric graphs and all degree-$g$ discrete admissible covers of $3g$-marked trees with ramification profiles above the legs given by $\vec{\mu}=(\mu_1,\dots,\mu_{3g})$ where $g\geq1$ and  
\begin{align*}
    &\mu_1= (g),&&\mu_i=(2,1^{g-2}),\textnormal{ for }2\leq i\leq 3g.
\end{align*}
We are interested in keeping the source of these discrete admissible covers and forgetting all the markings of the source curve to produce a genus-$g$ metric graph with a geometric Weierstrass point. For this reason, we consider the spanning tree fibration $\st_g$ over $\Mtrop_g$ and the extended spanning tree fibration $\est^\varnothing_{g,0,\vec{\mu}}$ over $\AC_{g,0,\vec{\mu}}$ (here $\varnothing$ indicates that we will eventually forget all the marked legs of the source). These poic-fibrations allow us to study tropical cycles on $\Mtrop_g$ and $\AC_{g,0,\vec{\mu}}$, and by means of these we are able to deduce our enumerative statement. The standard weight $\varpi^\varnothing_{g,0,\vec{\mu}}$ (Definition \ref{eq: standard weight}) defines an $\est^\varnothing_{g,0,\vec{\mu}}$-equivariant tropical cycle (Theorem 4.1.1 of loc. cit.), and the poic-fibration $\est^\varnothing_{g,0,\vec{\mu}}$ comes with the source morphism $\mathfrak{src}\colon \est^\varnothing_{g,0,\vec{\mu}}\to \st_g$ through which we pushforward the cycle defined by $\varpi^\varnothing_{g,0,\vec{\mu}}$. We show that this pushforward is a non-trivial multiple of the fundamental cycle of $\st_g$ and compute the corresponding multiplicity. Afterward, we define an equivalence relation through permutations of the markings of the discrete admissible covers whose equivalence classes correspond to the different Weierstrass points on a given metric graph. Through the previous results we introduce a multiplicity on this set of equivalence classes, and show that on a generic genus-$g$ graph counting this equivalence classes with multiplicity gives $(g+1)\cdot g\cdot(g-1)$.

The structure of the paper is as follows: In Section 2 we gather some preliminaries from \cite{DARB1} and \cite{DARB2}, and the reader is pointed toward these references for examples, details, and proofs. In Section 3 we compile several constructions and results from the divisor theory of metric graphs and their relation with tropical admissible covers. In Section 4 we discuss the notion of Weierstrass points of metric graphs, introduce that of geometric Weierstrass points, and apply the machinery of \cite{DARB1} and \cite{DARB2} to produce our previously mentioned enumerative result.

\subsection*{Acknowledgements:}
I would like to thank A. Gathmann for helpful and inspiring discussions, proofreading, and commentary. I also thank A. Vargas for helpful conversations and discussions.

\section{Preliminaries}
We recall the relevant notions, constructions, and results from \cite{DARB1} and \cite{DARB2}. In the spirit of brevity, the reader is pointed to these references for examples, proofs, and much more detail.

\subsection{Discrete graphs and discrete admissible covers} We begin by fixing notation and terminology surrounding discrete graphs and discrete admissible covers.

\begin{defi}
    A \emph{discrete graph} $G$ consists of the data $(F(G),r_G,\iota_G)$ where:
    \begin{itemize}
        \item $F(G)$ is a finite set.
        \item $r_G:F(G)\to F(G)$ is a map of sets (the root map).
        \item $\iota_G:F(G)\to F(G)$ is an involution, such that $\iota_G\circ r_G=r_G$.
    \end{itemize}
The \emph{set of vertices} of $G$ is the subset $V(G)$ of $F(G)$ given by $\Im(r_G)$, and its complement is denoted by $H(G)$.
\end{defi}

For a discrete graph $G$ the involution $\iota_G$ of $F(G)$ restricts to an involution of $H(G)$, and hence partitions $H(G)$ into orbits of size $1$ and $2$. The following definitions will be employed:
\begin{itemize}
    \item A \emph{leg} of $G$ is a size $1$ orbit of $H(G)$. The set of legs will be denoted by $L(G)$, and the \emph{boundary of a leg} $\ell\in L(G)$ is the subset $\partial e :=r_G(\ell) \subset V(G)$.
    \item An \emph{edge} of $G$ is a size $2$ orbit of $H(G)$. The set of edges will be denoted by $E(G)$, and the \emph{boundary of an edge} $e\in E(G)$ is the subset $\partial e:= r_G(e)\subset V(G)$.
    \item A vertex is \emph{incident} to an edge or a leg if it belongs to its boundary.
    \item The \emph{valency} of $V\in V(G)$ is simply $\val(V) := \# r_G^{-1}(V)-1$. 
    \item The \emph{leg valency} of $V\in V(G)$ is $\legval(V):= \#r_G^{-1}(V)\cap L(G)$.
    \item An edge is called a \emph{leaf} if it has an incident vertex with leg valency $2$.
    \item A \emph{subgraph} $K$ of $G$ is a graph $(F(K),r_K,\iota_K)$ such that $F(K)\subset F(G)$ with $r_K=r_G|_{F(K)}$ and $\iota_K=\iota_G|_{F(K)}$.
\end{itemize}

\begin{defi}\label{defi: maps of graphs}
 Suppose $G_1$ and $G_2$ are discrete graphs. A \emph{map of graphs} $f\colon G_1\to G_2$ is a map of sets $f\colon F(G_1)\to F(G_2)$ that commutes with the root maps (i. e. $r_{G_2}\circ f = f\circ r_{G_1}$) and the involution maps (i. e. $\iota_{G_2}\circ f = f\circ \iota_{G_1}$). 
\end{defi}

A map of graphs $f\colon G_1\to G_2$ is called:
\begin{itemize}
    \item \emph{bijective}, if the underlying map of sets is bijective.
    \item \emph{non-contracting}, if it does not involve the contraction of edges or legs of $G_1$ to vertices (i. e. $f^{-1}(V(G_2))\subset V(G_1)$).
\end{itemize}

We follow the notation of \cite{DARB2} for the contraction of edges and subgraphs of a graph.
\begin{defi}\label{defi: contraction of legless subgraph}
    If $K$ be a legless subgraph of $G$, with $K=K_1\sqcup \dots \sqcup K_m$ its connected components, then the \emph{contraction of $K$ in $G$} is the graph $G/K$ obtained by the successive contractions of all the edges of $K_1, \dots ,K_m$, where the new vertex obtained from the contraction of all the edges of $K_i$ is denoted by $V_{K_i}$.
\end{defi}
The reader is kindly referred to Lemma 2.2.3 of \cite{DARB1} for the proof of the following lemma.
\begin{lem}\label{lem: map of graphs is contraction}
    Suppose $f\colon G_1\to G_2$ is a map of graphs such that:
    \begin{enumerate}
        \item For every $V\in V(G_2)$, the inverse image $f^{-1}(V)\subset F(G_1)$ is a legless tree.
        \item For every $e\in E(G_2)\cup L(G_2)$, the inverse image under $f$ consists of a single edge or leg.
    \end{enumerate}
    Then there is a natural bijective map $G_1/f^{-1}\left(V(G_2)\right)\to G_2$.
\end{lem}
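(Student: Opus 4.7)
The plan is to construct the bijection explicitly by showing that $f$ itself factors through the contraction $G_1/K$, where $K:=f^{-1}(V(G_2))$, and that the induced map $\bar f\colon G_1/K\to G_2$ is a bijective map of graphs. First I would verify that $K$ is a legless subgraph of $G_1$ whose connected components are exactly the legless trees $f^{-1}(V)$ indexed by $V\in V(G_2)$: closedness of each $f^{-1}(V)$ under $r_{G_1}$ and $\iota_{G_1}$ is automatic from the commutation relations defining a map of graphs, since vertices of $G_2$ are fixed by $r_{G_2}$ and $\iota_{G_2}$, and the tree structure is provided by hypothesis (1). I would also record that $V(G_1)\subset K$, because $f$ sends vertices to vertices; hence after contracting $K$ the vertex set of the quotient is exactly $\{V_{f^{-1}(V)}: V\in V(G_2)\}$, one per vertex of $G_2$.

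Next, I would define the candidate map $\bar f\colon G_1/K\to G_2$ at the flag level by $V_{f^{-1}(V)}\mapsto V$ on the new vertices and $\phi\mapsto f(\phi)$ on the remaining flags $\phi\in F(G_1)\setminus K$. Well-definedness is immediate because $f$ is constant on each component $f^{-1}(V)$, so $f$ factors through the quotient projection $G_1\to G_1/K$ with $\bar f$ as the induced map. That $\bar f$ is a map of graphs follows without difficulty: compatibility with $r$ and $\iota$ on flags of $F(G_1)\setminus K$ is inherited from $f$, while on the new vertices both structural maps act as the identity on either side.

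For bijectivity, the vertex level comes directly from hypothesis (1), which yields one contracted component $V_{f^{-1}(V)}$ per $V\in V(G_2)$, and the edge/leg level comes from hypothesis (2): the unique preimage edge or leg above each $e\in E(G_2)\cup L(G_2)$ is supported on flags mapping to half-edges or legs of $G_2$, hence lies outside $K$ and survives unaltered in $G_1/K$. The one point requiring genuine care, and which I would flag as the main obstacle, is the combinatorial bookkeeping under contraction: one must confirm that no two flags of $F(G_1)\setminus K$ become identified after quotienting. This is handled by the definition of the contraction of a legless subgraph, which only collapses flags internal to the connected components of $K$, but it is easy to overlook if the verification is rushed.
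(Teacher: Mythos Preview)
Your proposal is correct and follows the natural direct route: show that $K=f^{-1}(V(G_2))$ is a legless subgraph with connected components the trees $f^{-1}(V)$, factor $f$ through the contraction, and verify that the induced $\bar f$ is a bijective map of graphs using hypotheses (1) and (2) for the vertex and half-edge levels respectively. The paper itself does not supply a proof of this lemma; it simply refers the reader to Lemma~2.2.3 of \cite{DARB1}, so there is no in-paper argument to compare against, but your explicit construction is precisely the standard verification one would expect and nothing in it is problematic.
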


\begin{defi}
    A \emph{degree assignment} on $G_1$ is a map $d\colon F(G_1)\to \bbZ_{\geq0}$ such that $d\circ \iota = d$.
\end{defi}

\begin{nota}
    A degree assignment on $G_1$ is equivalently defined as an integral valued function on $V(G_1)\cup E(G_1)\cup L(G_1)$ due to its behavior with the involution. We abuse this equivalence and alternate indiscriminately between these two equivalent definitions.
\end{nota}

\begin{defi}
     A \emph{discrete admissible cover} $\pi\colon G_1\to G_2$ consists of the following data:
     \begin{itemize}
         \item A surjective non-contracting map of graphs $\pi\colon G_1\to G_2$. 
         \item A degree assignment $d_\pi$ on $G_1$ satisfying: For any $V\in V(G_1)$ and $H^\prime\in r_{G_2}^{-1}\left(\pi(V)\right)\backslash\{\pi(V)\}$ 
     \begin{equation*}
     d_\pi(V) = \sum_{H\in \pi^{-1}(H^\prime)\cap r_{G_1}^{-1}(V)}d_\pi(H).
     \end{equation*}
        \item These are subject to the following condition (vanishing of the Riemann-Hurwitz number): For every $V\in V(G)$ we have that
        \begin{equation*}
            \val(V)-2 = d_\pi(V)\cdot\left(\val\pi(V)-2\right).
        \end{equation*}
     \end{itemize}
    If $\pi\colon G_1\to G_2$ is a discrete admissible cover, then the \emph{local degree at } $V\in V(G_1)$ is the integer $d_\pi(V)$.
\end{defi}
It is well known that the second condition on a discrete admissible cover $\pi\colon G\to H$ implies that the weighted sum of fibers of vertices of $H$ is a constant number (see for instance Lemma 3.1.1 of \cite{DARB2}). This gives rise to the next definition.
\begin{defi}
    The \emph{degree} of a discrete admissible cover $\pi\colon G\to H$ is the integer $\deg (\pi)$ given as $\deg \pi := \sum_{V\in\pi^{-1}(A)}d_\pi(V)$, for an arbitrary $A\in V(H)$.
\end{defi}

The degree of a discrete admissible cover is related to the genera of the source and target as follows (Proposition 3.1.2 of \cite{DARB2}).

\begin{prop}[Riemann-Hurwitz equality] Suppose $G_2$ is connected and $\pi\colon G_1\to G_2$ is a discrete admissible cover. The genera $g(G_1)$ and $g(G_2)$ are related to the degree of $\pi$ by:
\begin{equation}
\#L(G_1) + 2(g(G_1)-1) = \deg\pi \cdot ( \#L(G_2) + 2(g(G_2)-1)) + \sum_{V\in V(G_1)} r_\pi(V).\label{eq: Riemann-Hurwitz}
\end{equation}
\end{prop}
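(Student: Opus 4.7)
The plan is to pass from a pointwise identity to a global one: rewrite the admissibility relation as $\val(V)-2 = d_\pi(V)(\val\pi(V)-2) + r_\pi(V)$, where $r_\pi(V)$ is the local Riemann-Hurwitz discrepancy (which is zero under the admissibility hypothesis but which I keep explicit throughout), and sum this equation over all $V\in V(G_1)$. Elementary counting then converts each side into one of the advertised global invariants, and the term $\sum_V r_\pi(V)$ is carried through to the final form exactly as stated in the proposition.

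For the left-hand side $\sum_V(\val(V)-2)$: because $\sum_V \val(V)$ counts every half-edge and every leg of $G_1$ exactly once at its boundary vertex, this sum equals $2\#E(G_1)+\#L(G_1)-2\#V(G_1)$, and the Euler-characteristic relation $\#E(G_1)-\#V(G_1)=g(G_1)-1$ (valid since $G_1$ is connected, which follows from surjectivity and non-contraction of $\pi$ together with connectedness of $G_2$) simplifies this to $\#L(G_1)+2(g(G_1)-1)$. For the first summand on the right-hand side, I would reindex by fibers, writing $\sum_V d_\pi(V)(\val\pi(V)-2) = \sum_{A\in V(G_2)}(\val(A)-2)\sum_{V\in\pi^{-1}(A)} d_\pi(V)$. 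The inner sum equals $\deg\pi$ for every $A\in V(G_2)$; this is precisely the content of Lemma 3.1.1 of \cite{DARB2} referenced just before the definition of $\deg\pi$, and is what promotes the local balancing relation into a genuinely global invariant. Factoring $\deg\pi$ out and applying the same Euler-characteristic count to $G_2$ converts this sum into $\deg\pi\cdot(\#L(G_2)+2(g(G_2)-1))$.

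Combining the two computations gives the claimed identity with the residual sum $\sum_V r_\pi(V)$ appearing on the right-hand side exactly as written; the admissibility condition then sits in the identity as the pointwise vanishing of each $r_\pi(V)$ without having to be invoked to collapse the sum. The only step that is not purely formal is the constancy of $\sum_{V\in\pi^{-1}(A)} d_\pi(V)$ in $A$: the balancing condition of the degree assignment is a priori a relation at an individual vertex involving only a single choice of target half-edge $H^\prime$, and turning it into the statement that the weighted fiber-sum is independent of $A$ requires propagating the relation along edge paths of $G_2$. This propagation is exactly what the cited Lemma 3.1.1 of \cite{DARB2} establishes and is what makes $\deg\pi$ well-defined in the first place.
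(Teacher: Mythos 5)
Your argument is correct and is the standard double-counting proof: sum the local relation $\val(V)-2 = d_\pi(V)(\val\pi(V)-2)+r_\pi(V)$ over $V\in V(G_1)$, use the handshake identity $\sum_V \val(V)=2\#E+\#L$ together with $\#E-\#V=g-1$ on both source and target, and reindex the middle term by fibers using the constancy of $\sum_{V\in\pi^{-1}(A)}d_\pi(V)$. The paper itself does not prove this proposition --- it cites Proposition 3.1.2 of \cite{DARB2} --- so there is no internal proof to diverge from, and your route is surely the intended one.

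One side remark in your write-up is false: connectedness of $G_1$ does \emph{not} follow from surjectivity and non-contraction of $\pi$ together with connectedness of $G_2$ (a disjoint union of two copies of $G_2$, each mapping identically with all local degrees $1$, is a degree-$2$ discrete admissible cover with disconnected source). The step you need, $\#E(G_1)-\#V(G_1)=g(G_1)-1$, is therefore justified either by an implicit standing assumption that sources of admissible covers are connected (as they are everywhere the proposition is used in the paper, since they are objects of $\bbG_{g,n}$) or by taking $g-1=\#E-\#V$ as the definition of genus for possibly disconnected graphs; it is not a consequence of the stated hypotheses. This does not affect the validity of the computation itself.
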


\subsection{Essentials of poic-spaces, -fibrations, and their tropical cycles} 
In this section we recollect the definitions concerning poic-spaces, -complexes, and -fibrations necessary to our endeavors. We avoid detail and favor heuristics, so that the interested reader is referred toward \cite{DARB1} and \cite{DARB2} for the technicalities of the framework. We refrain from formalities and technicalities to favor a quicker and more direct introduction to the objects of interest. 

We begin with the definition of poics. These are the building blocks behind poic-spaces, and the subsequent objects.

\begin{defi}\label{defi: poic}
A \emph{partially open integral polyhedral cone} (hereafter poic) is a pair $(\sigma, N)$ where 
\begin{itemize}
    \item $N$ is a free abelian group of finite rank, 
    \item $\sigma$ is a partially open rational polyhedral cone of $N_\bbR$ with $\dim(\sigma)=\dim(N_\bbR)$.
\end{itemize}
A \emph{face} of a partially open integral cone $(\sigma,N)$ is $(\tau,\Lin_N(\tau))$ where $\tau$ is a face of $\sigma$ in $N_\bbR$ and $\Lin_N(\tau) = N\cap \Lin(\tau)$. 
\end{defi}

We remark that if $(\sigma,N)$ is a poic, then $(\sigma^0,N)$ is also a poic where $\sigma^0$ denotes the relative interior. If the pair $(\sigma,N)$ is a poic, then we will denote $N$ by $N^\sigma$, and denote the pair simply by $\sigma$.

\begin{defi}
    If $\xi$ is an additional poic, then a \emph{morphism} $f\colon\sigma\to \xi$ consists of an integral linear map $f\colon  N^\sigma\to N^\xi$, such that the induced linear map $f_\bbR:N^\sigma_\bbR\to N^\xi_\bbR$ sends $\sigma$ to $\xi$. We say that a morphism $f\colon \sigma\to \xi$ is a \emph{face-embedding}, if the integral linear map is injective and $f_\bbR(\sigma)\leq \xi$. 
\end{defi}

The data of poics and morphisms thereof, together with natural composition of maps, defines a category, which we denote by $\POIC$. We now continue with poic-spaces. The idea behind a poic-space is that of topological spaces glued out of finitely many quotients of partially open polyhedral cones by actions of finite groups. 

\begin{defi}
    A \emph{poic-space} $\cX$ is a functor $\cX\colon C_\cX\to \POIC$, where $C_\cX$ is an essentially finite category with finite hom-sets, and $\cX$ is subject to the following conditions:
    \begin{itemize}
        \item The functor $\cX$ maps morphisms of $C_\cX$ to face-embeddings.
        \item If $x$ is an object of $C_\cX$ and $\tau$ is a face of $\cX(x)$, then there exists a unique up to isomorphism $w\to x$ in $C_\cX$ with $\cX(w\to x) \cong \tau\leq \cX(x)$.
    \end{itemize} 
    \end{defi}

We denote the set of isomorphism classes of $C_\cX$ by $[\cX]$, and the set of isomorphism classes of $k$-dimensional cones of $\cX$ by $[\cX](k)$. We say that a poic-space $\cX$ is \emph{pure of dimension $n$}, if every object $t$ of $\cX$ appears as the source of a morphism to an object of $\cX$ whose cone is $n$-dimensional.

\begin{defi}
A \emph{morphism of poic-spaces} $F\colon \cX\to\cY$ consists of the following data: 
    \begin{itemize}
        \item A functor $F\colon C_\cX\to C_\cY$.
        \item A natural transformation $\eta_F\colon \cX\implies \cY\circ F$.
    \end{itemize}
    These are subject to the following condition: for a cone $x$ of $\cX$, the cone $\eta_{F,x}\left(\cX(x)\right)$ does not lie in a proper face of $\cY(F(x))$. Composition of morphisms of poic-spaces is defined in the natural way.
\end{defi}

Of special interest are the poic-spaces that mimic the structure of partially open fans, namely where there are no finite groups acting on the cones. These are poic-complexes. More precisely:

\begin{defi}
A \emph{poic-complex} $\Phi$ is a poic-space, where the underlying category $C_\Phi$ is essentially finite and thin. A \emph{$k$-dimensional rational weight $\omega$ on $\Phi$} is simply a function $\omega\colon  [\Phi](k)\to \bbQ$.
\end{defi}

As usual, tropical cycles are described as weights (on a subdivision) satisfying a balancing condition. In our context, the balancing condition makes sense upon the introduction of a  morphism to a rational vector space, and this data gives rise to the notion of linear poic-complexes. 

\begin{defi}
A \emph{linear poic-complex} $\Phi_X$ consists of the data of:
\begin{itemize}
    \item A poic-complex $\Phi$. 
    \item A morphism of poic-complexes $X\colon \Phi\to \underline{(N_X)_\bbR}$, where $N_X$ is a free abelian group of finite rank and $\underline{(N_X)_\bbR}$ denotes the poic-space defined (tautologically) by the poic $((N_X)_\bbR,N)$.
\end{itemize}
\end{defi}

Suppose $\Phi_X$ is a linear poic-complex and let $k\leq0$ be an integer. The additional datum of the morphism $X\colon \Phi\to\underline{(N_X)_\bbR}$ gives rise to a notion of balancing, and therefore to \emph{$k$-dimensional rational Minkowski weights} of $\Phi_X$ which form a subgroup $M_k(\Phi_X)_\bbQ$ of $W_k(\Phi)_\bbQ$. We use rational Minkowski weights to define rational tropical cycles on linear poic-complexes. More specifically, if $S\colon\Phi^\prime\to\Phi$ is a \emph{subdivision}, then we obtain a linear poic-complex $\Phi^\prime_{X\circ S}$ and an injective linear map $S^*\colon M_k(\Phi_X)_\bbQ\to M_k(\Phi^\prime_{X\circ S})_\bbQ$. We consider the category $\Subd(\Phi)$ given by the \emph{subdivisions} of $\Phi$ (Definition 3.4.4 \cite{DARB1}) and observe that the above defines a functor with a representable colimit
\begin{equation*}
    M_k(\bullet)_\bbQ \colon \Subd(\Phi)^{\op}\to \bbQ\mathrm{-Vect},
\end{equation*} 
and we say that the \emph{group of rational tropical $k$-cycles} is the colimit of this functor (here $\bbQ\mathrm{-Vect}$ denotes the category of $\bbQ$-vector spaces) an denote it by $Z_k(\Phi_X)_\bbQ$.

\begin{defi}
    A linear poic-complex $\Phi_X$ pure of dimension $n$ (every cone maps to an $n$-dimensional cone) is called \emph{irreducible}, if $Z_n(\Phi_X)_\bbQ$ is $1$-dimensional.
\end{defi}

If $\Phi_X$ and $\Psi_Y$ are linear poic-complexes, then a \emph{morphism of linear poic-complexes} $\phi:\Phi_X\to\Psi_Y$ consists of the following data: 
\begin{itemize}
    \item A morphism of poic-complexes $\phi\colon\Phi\to\Psi$
    \item An integral linear map $\phi_{\tint}\colon N_X\to N_Y$ (this is equivalent to a morphism of poic-complexes $\underline{\left(N_X\right)_\bbR}\to \underline{\left(N_Y\right)_\bbR}$).
    \item These yield a commutative square of poic-complexes.
\end{itemize}
There are several technicalities concerning pushforward of tropical cycles, for which we refer the reader to Section 3.5 of \cite{DARB1}.

To describe cycles of a poic-space we make use of linear poic-fibrations. In this case, we speak of equivariant rational tropical cycles with respect to the poic-fibration. We briefly describe what these are, and the general framework around them.

\begin{defi}
     Let $\Phi$ be a poic-complex and $\cX$ a poic-space. A morphism of poic-spaces $\pi\colon \Phi\to \cX$ is called a \emph{poic-fibration}, if:
\begin{enumerate}
    \item The functor is essentially surjective.
    \item For any cone $p$ of $\Phi$ the map $\eta_{\pi,p}\colon  \Phi(p)\to \cX\left( \pi(p)\right)$ induces an isomorphism between their relative interiors.
    \item For any object $p$ of $\Phi$ and any morphism $f\colon \pi(p)\to x$ in $\cX$, there exists a morphism $h\colon p\to q$ in $\Phi$ and an isomorphism $g\colon  \pi(q)\to x$ such that $f=g\circ \pi(h)$, where $h$ is unique up to isomorphism under $p$, and $g$ is unique up to isomorphism over $x$.
\end{enumerate}
If $\Phi_X$ is additionally a linear poic-complex, and $\pi\colon\Phi\to\cX$ is a poic-fibration, then we say that $\pi\colon\Phi_X\to\cX$ is a linear poic-fibration. 
\end{defi}

Naturally, we are interested in morphisms of poic-fibrations, for which we make use of the following notation. Let $\pi\colon \Phi\to \cX$ and $\rho\colon \Psi\to \cY$ denote poic-fibrations. A morphism of poic-fibrations $\mathfrak{f}\colon \pi\to\rho$ consists of: 
\begin{itemize}
    \item A morphism of poic-complexes $\mathfrak{f}^\pcmplxs\colon \Phi\to\Psi$ (the superscript stands for complexes).
    \item A morphism of poic-spaces $\mathfrak{f}^\pspcs\colon\cX\to \cY$ (the superscript stands for spaces).
    \item These are subject to the condition $\rho\circ \mathfrak{f}^\pcmplxs = \mathfrak{f}^\pspcs\circ \pi$.
\end{itemize}
In addition, if $\pi\colon \Phi_X\to\cX$ and $\rho\colon\Psi_Y\to\cY$ are linear poic-fibrations, then $\mathfrak{f}^\pcmplxs$ must also be a morphism linear poic-complexes. 

Let $\pi\colon\Phi_X\to\cX$ be a linear poic-fibration. The first and second condition imply that the fibration $\pi$ induces a surjective map of sets $[\pi]\colon [\Phi]\to [\cX]$ that preserves the dimension of the associated cones.
\begin{defi}
Let $k\geq 0$ be an integer. A \emph{$k$-dimensional rational $\pi$-equivariant Minkowski weight} $\omega$ is a weight $\omega\in M_k(\Phi_X)_\bbQ$ such that $\omega$ is constant on the fibers of $[\pi]$. The set of $\pi$-equivariant Minkowski weights is denoted by $M_k(\cX_{\pi,X})_\bbQ$, which is in fact a subgroup of $M_k(\Phi_X)_\bbQ$.
\end{defi}

As in the case of linear poic-complexes subdivisions of $\Phi$ are necessary to introduce $\pi$-equivariant cycles. However, we are not interested in general subdivisions of $\Phi_X$, but rather on subdivisions compatible with the poic-fibration. We call these \emph{$\pi$-compatible subdivisions} and introduce them in Definition 4.3.3 of \cite{DARB1}. The $\pi$-compatible subdivisions of $\Phi$ exhibit the same behaviour with respect to $\pi$-equivariant Minkowski weights as the standard subdivisions of $\Phi$ with respect to Minkowski weights. We consider the category $\pi\textnormal{-}\Subd(\Phi)$ of $\pi$-compatible subdivisions of $\Phi$, then for any integer $k\geq0$ the $k$-dimensional $\pi$-equivariant Minkowski weights give rise to a representable functor $M_k(\cX_{\pi,\bullet})_\bbQ\colon          \pi\textnormal{-}\Subd(\Phi)^{\op}\to \bbQ\text{-Vect}$.
\begin{defi}
    If $\pi:\Phi_X\to\cX$ is a linear poic-fibration, the \emph{group of tropical $k$-cycles of the fibration} is defined as the colimit of $M_k(\cX_{\pi,\bullet})_\bbQ$ over $\pi\textnormal{-}\Subd(\Phi)^{\op}$.
\end{defi}

It is also shown in Lemma 4.3.11 of loc. cit., that if $\pi\colon\Phi_X\to\cX$ is a linear poic-fibration, then the natural inclusions give rise to an injective linear map $\pi_k^*\colon Z_k(\cX_{\pi,X})_\bbQ\to Z_k(\Phi_X)_\bbQ$. We close this subsection with the corresponding definition of irreducible linear poic-fibrations.

\begin{defi}
A linear poic-fibration $\pi\colon \Phi_X\to \cX$ (with $\Phi_X$ pure of dimension $n$) is called \emph{irreducible}, if $Z_n(\cX_{\pi,X})_\bbQ$ is $1$-dimensional.
\end{defi}

\subsection{Spanning tree fibrations and their extensions} We will first describe a poic-space whose underlying cones are given by genus-$g$ $n$-marked graphs together with a linear poic-fibration over it. We then proceed with similar constructions in the case of discrete admissible covers. Throughout this exposition we provide concrete references to \cite{DARB1} and \cite{DARB2} for the interested reader.

Let $n\geq0$ be an integer. A \emph{discrete graph with $n$-marked legs} is a discrete graph $G$ with a bijection $\ell_\bullet(G)\colon \{1,\dots,n\}\to L(G),i\mapsto \ell_i(G)$, which we will call the \emph{marking}. For $1\leq i\leq n$, the \emph{$i$th leg of $G$} is simply the leg $\ell_i(G)\in L(G)$. A \emph{map of $n$-marked graphs} is a map of graphs that commutes with the respective markings.

A map of graphs $f\colon G_1\to G_2$ is a \emph{contraction} if it satisfies the hypotheses of Lemma \ref{lem: map of graphs is contraction}. If both graphs are $n$-marked, then we say that $f$ is a \emph{contraction of $n$-marked graphs} if it is additionally a morphism of $n$-marked graphs. Naturally, composition of contractions is a contraction. In addition, any bijective map of graphs is a contraction.
\begin{defi}
    For an non-negative integers $n$ and $g$ with $2g+n-2>0$, the \emph{category of $n$-marked graphs of genus-$g$} is the category $\bbG_{g,n}$ specified by:
\begin{itemize}
    \item The class of objects consists of the connected genus-$g$ discrete graphs with $n$-marked legs whose vertices are at least $3$-valent.
    \item For two objects $G_1$ and $G_2$ of $\bbG_{g,n}$, the set of morphisms $\Hom_{\bbG_{g,n}}(G_1,G_2)$ consists of the contractions of $n$-marked graphs $f\colon G_1\to G_2$. Composition of morphisms is just composition of maps.
\end{itemize}
    In addition, if $G$ is an object of $\bbG_{g,n}$ its \emph{cone of metrics} $\sigma_G$ is the poic defined as follows. Suppose $K$ is an unmarked subgraph of $G$ whose connected components are non-trivial trees, then $G/K$ is an object of $\bbG_{g,n}$. Furthermore, $E(G/K)=E(G)\backslash K$, and the inclusion $E(G/K)\subset E(G)$ gives rise to a face-embedding
    \begin{equation*}
        \bbR^{E(G/K)}_{\geq 0}\to \bbR^{E(G)}_{\geq0}.
    \end{equation*}
    The poic $\sigma_G$ is defined as the subcone of $\bbR_{\geq0}^{E(G)}$ that contains $\bbR^{E(G)}_{>0}$ and all the images of $\bbR_{>0}^{E(G/K)}$, where $K$ is as above, under the corresponding face-embeddings. 
\end{defi}

This cone of metrics functor is of special importance to us, because it gives rise to a poic-space of interest. Namely, Lemma 2.4.3 of \cite{DARB1} shows that the cone of metrics construction gives rise to a poic-space $\Mtrop_{g,n}\colon \bbG_{g,n}^{\op}\to \textnormal{POIC}$. The case of $g=0$ is relevant as it is a well studied case where $\Mtrop_{0,n}$ carries naturally a linear poic-complex structure, this is explained in full detail in Section 3.3 of \cite{DARB1} (and Section 2.8 of \cite{DARB2}).

We will use this linear poic-complex structure on the case of $g=0$, to define a linear poic-fibration over $\Mtrop_{g,n}$ for arbitrary $g$. We consider the linear poic-complex $\ST_{g,n}$ specified as follows. The underlying poic-space is the product $\Mtrop_{0,n+2g}\times \underline{\mathbb{R}^g_{>0}}$, but the linear poic-complex is obtained from the projection onto $\Mtrop_{0,n+2g}$ and its natural linear poic-complex structure. If $T$ is an object of $\bbG_{0,n+2g}$, then we obtain on object $\st_{g,n}(T)$ of $\bbG_{g,n}$ out of $T$ by joining the $(n+i)$th and the $(n+g+i)$th legs of $T$ into a single edge for $1\leq i\leq g$. In Proposition 4.5.2 of \cite{DARB1} we show that this construction gives rise to an essentially surjective functor
    \begin{equation}
    {\st_{g,n}}\colon  \bbG^{\op}_{0,2g+n}\to \bbG_{g,n}^{\op}.\label{eq: spanning tree functor genus g n marked}
    \end{equation}
For $1\leq i\leq g$ let $e_i$ denote the edge of $\st_{g,n}(T)$ given by glueing the $(n+i)$th and the $(n+g+i)$th legs of $T$. The graph $\st_{g,n}(T)$ has the set of edges $E(\st_{g,A}(T)) = E(T) \cup \{e_1,\dots,e_g\}$, and the poic $\sigma_T$ can therefore be identified with a face of $\sigma_{\st_{g,A}(T)}$. Furthermore, by identifying the $i$th-coordinate with the coordinate corresponding to the edge $e_i$, the poic $\bbR_{>0}^g$ corresponds to the relative interior of the face of $\sigma_{\st_{g,n}(T)}$ given by the $\{e_i\}_{i=1}^g$. In this way, the product $\sigma_T\times \bbR_{>0}^g$ has a natural inclusion morphism into $\sigma_{\st_{g,n} (T)}$. It is shown in Proposition 4.5.6 of loc. cit. that the functor \eqref{eq: spanning tree functor genus g n marked} and these inclusions give rise to a linear poic-fibration:
    \begin{equation}
        \st_{g,n}\colon \ST_{g,n}\to \Mtrop_{g,n}.\label{eq: spanning tree fib}
    \end{equation}
We call this linear poic-fibration the \emph{spanning tree fibration}. It is shown in Proposition 4.5.8 that $\ST_{g,n}$ is pure of rank $3g+n-3$ and the poic-fibration $\st_{g,n}$ is irreducible. 

These poic-spaces come naturally equipped with forgetting-the-marking morphisms of the spanning tree fibrations. These are describe in full detail in Section 4.6 of loc. cit., and the reader is directed there for the corresponding constructions, technicalities and particular properties. For this project, we content ourselves with the heuristics of the only relevant case for our endeavors of these which is forgetting all the markings. Namely, we have a (weakly proper) morphism of linear poic-fibrations (Proposition 4.5.6 of loc. cit.) 
    \begin{equation*}
        \mathfrak{ft}_{\{1,\dots,n\}}\colon \st_{g,n}\to \st_{g},
    \end{equation*}
where the underlying combinatorics are simply given by deleting the marked legs of the graph. To avoid excessive notation, we will refer to the underlying natural transformations of $\mathfrak{ft}_{\{1,\dots,n\}}^{\pspcs}$ and $\mathfrak{ft}_{\{1,\dots,n\}}^{\pcmplxs}$ simply by $\ft_{\{1,\dots,n\},\bullet}$, where $\bullet$ will indicate the $n$-marked graph whose legs are being forgotten.

We now proceed with the case of discrete admissible covers, for which we introduce the following notation throughout.
    \begin{nota}\label{nota: standing hdmmu}
        Let $h,d,m\geq 0$ be integers with $2h+m-2>0$, a vector $\vec{\mu}=(\mu_1,\dots,\mu_m)$ of partitions of $d$, set $n= \sum_{i=1}^m\ell(\mu_i)$, and let $g$ be defined by
    \begin{equation*}
        n+2(g-1) = d\cdot(m+2(h-1)).
    \end{equation*}
We assume that $g$ is an integer, which imposes further conditions on $d$, $m$ and $\vec{\mu}$. It is apparent (but worth mentioning) that this equation for $g$ comes from the Riemann-Hurwitz equality \eqref{eq: Riemann-Hurwitz}.
    \end{nota}
\begin{defi}\label{defi: category achm mu}
    Similar to the categories $\bbG_{g,n}$, we define a category $\bbAC_{d,h}(\vec{\mu})$ consisting of the discrete admissible covers with the previous specifications. Namely, the \emph{category of discrete admissible covers of genus-$h$ $m$-marked curves with ramification $\vec{\mu}$} is the category $\bbAC_{d,h}(\vec{\mu})$ given by the following data:
\begin{itemize}
    \item The objects of $\bbAC_{d,h}(\vec{\mu})$ consist of degree-$d$ discrete admissible covers $\pi\colon G\to H$ where:
    \begin{itemize}
        \item $H$ is an object of $\bbG_{h,m}$.
        \item $G$ is an object of $\bbG_{g,n}$ (it is immediate that $2g+n-2>0$). 
        \item For every $1\leq k\leq m$:
            \begin{equation*}
                \pi\left(\bigcup_{j=1}^{\ell(\mu_k)}\ell_{N+j}(G) \right) =\ell_k(H), 
            \end{equation*}
        where $N=\sum_{i<k}\ell(\mu_i)$.
        \item For every $1\leq k\leq m$ and every $1\leq j\leq \ell(\mu_k)$, the weight of the $\sum_{i<k}\ell(\mu_i)+j$-th leg of $G$ is $\mu_k^j$.
    \end{itemize}
    \item Morphisms are given by pairs of morphisms from the corresponding $\bbG_{g,n}$ and $\bbG_{h,m}$ categories yielding commutative squares of maps of graphs. Composition of morphisms is just composition of tuples of maps.
\end{itemize}
\end{defi}

If $\pi\colon G\to H$ is an object of $\bbAC_{d,h}(\vec{\mu})$, then we set $\src(\pi):=G$ (as in source) and $\trgt(\pi):=H$ (as in target). Given a metric on the target, we determine a metric on the source as specified in the next definition.

\begin{defi}\label{defi: matrix assoc to dac}
    Suppose $\pi\colon G\to H$ is a discrete admissible cover. The \emph{matrix associated to $\pi$} is the linear map $F_\pi:\bbR^{E(H)} \to \bbR^{E(G)},v\mapsto F_\pi(v),$ where for $h\in E(G)$ the $h$-coordinate of $F_\pi(v)$ is defined by
    \begin{equation}
    \left(F_\pi({v})\right)_h = \frac{\mathrm{lcm}\left((d_\pi(e))_{e\in \pi^{-1}(\pi(h))}\right)}{d_\pi(h)}({v})_{\pi(h)},\label{eq: assoc matrix}
    \end{equation}
     where $\mathrm{lcm}(\bullet)$ denotes the least common multiple. The \emph{cone of metrics of $\pi$} is the cone $\sigma_{\pi}$ given as the closure of the graph of the restriction of $F_\pi$ to $\sigma_H^0$ in $\sigma_H\times \sigma_G$.
\end{defi}

The cone of metrics construction gives rise to a poic-space
     \begin{equation*}
        \AC_{d,h,\vec{\mu}}\colon\bbAC_{d,h}^{\op}(\vec{\mu})\to\POIC,\pi\mapsto\sigma_\pi.
     \end{equation*}
Additionally, there are natural morphisms of poic-spaces $\src\colon \AC_{d,h,\vec{\mu}}\to \Mtrop_{g,n}$ and $\trgt\colon \AC_{d,h,\vec{\mu}}\to \Mtrop_{h,m}$ given by preserving the source or the target of the discrete admissible cover respectively.

To construct a poic-fibration over $\AC_{d,h,\vec{\mu}}$, we first observe that the product of linear poic-fibrations is a linear poic-fibration, and therefore the linear poic-fibrations
\begin{align*}
    &\st_{g}\colon \ST_{g}\to \Mtrop_{g},
    &\st_{h,m}\colon \ST_{h,m}\to \Mtrop_{h,m},
\end{align*}
give a linear poic-fibration $\st_{g}\times\st_{h,m}\colon \ST_{g,n}\times\ST_{h,m}\to \Mtrop_{g}\times\Mtrop_{h,m}$. In order to get a poic-fibration over $\AC_{d,h,\vec{\mu}}$ we pullback this poic-fibration through the morphism 
\begin{equation*}
    \left(\mathfrak{ft}_{\{1,\dots,n\}}\circ\mathfrak{src}\right) \times \mathfrak{trgt}\colon \AC_{d,h,\vec{\mu}}\to \Mtrop_g\times\Mtrop_{h,m}.
\end{equation*}

The motivation for this specific pullback is to study tropical  cycles in $\AC_{d,h,\vec{\mu}}$ through a poic-fibration that comes with a source morphism to $\st_{g}$ through which we can pushforward top dimensional cycles (i. e. weakly proper in top dimension). Since the details of this construction are seldom necessary for the purposes of this article, we refer the reader to Section 3.4 of \cite{DARB2} and content ourselves with a quick phrasing of the important definition and result. However, we do honor the notation therefrom. In this case, the previous construction procuces a linear poic-complex $\EST^\varnothing_{d,h,\vec{\mu}}$ together with a morphism of poic-spaces $\est^\varnothing_{d,h,\vec{\mu}}\colon \EST^\varnothing_{d,h,\vec{\mu}}\to \AC_{d,h,\vec{\mu}}$ that is actually a linear poic-fibration (Proposition 3.4.2 of loc. cit.). We refer to this fibration as the \emph{extended spanning tree fibration of unmarked discrete admissible covers of genus-$h$ $m$-marked graphs with ramification $\vec{\mu}$}. From the construction, this fibration comes directly with a (weakly proper in top dimension) morphism of linear poic-fibration $\mathfrak{src}\colon \est^\varnothing_{d,h,\vec{\mu}}\to \st_g$. We now describe a weight of this fibration.

\begin{defi}\label{defi: weight of the cover}
    For a cover $\pi\colon G\to H$ of $\bbAC_{d,h}(\vec{\mu})$ the \emph{standard weight $\varpi^\varnothing_{d,h,\vec{\mu}}(\pi) $ of the cover} is the number given by 
    \begin{equation}
        \varpi^\varnothing_{d,h,\vec{\mu}}(\pi) = \frac{\left(\prod_{e\in E(G)}d_\pi(e)\right)\cdot \left(\prod_{V\in V(G)} H(V)\cdot \mathrm{CF}(V)\right)}{\prod_{h\in E(H)}\mathrm{lcm}\left((d_\pi(e))_{e\in\pi^{-1}(h)}\right)} ,\label{eq: standard weight}
    \end{equation}
    where for $V\in V(G)$:
    \begin{itemize}
        \item $H(V)$ denotes the local rational connected Hurwitz number. Namely, around $V$ the cover induces integer partitions partitions $\lambda_1,\dots,\lambda_k$ of $d_\pi(V)$ and therefore $H(V) = H_{0\to 0}(\lambda_1,\dots,\lambda_k)$\footnote{This Hurwitz number $H_{0\to 0}(\lambda_1,\dots,\lambda_k)$ is just the product of $\frac{1}{d_\pi(V)!}$ with the number of isomorphism classes of branched coverings $f\colon\bbP^1\to\bbP^1$, where the branching locus consists of $m$ distinct points $p_1,\dots,p_m$ and the ramification profile of $f$ at $p_i$ is $\lambda_i$ (for $1\leq i\leq m$).}. 
        \item $\mathrm{CF}(V)$ is the product of factors of $k!$ for each $k$-tuple of adjacent edges or legs of the same weight that map to the same edge of the target.
    \end{itemize}
    Since this is defined over $\bbAC_{d,h}(\vec{\mu})$, we can use the fibration $\est^\varnothing_{d,h,\vec{\mu}}$ and this weight to readily define a $\est^\varnothing_{d,h,\vec{\mu}}$-equivariant weight. We abuse notation and refer to $\varpi^\varnothing_{d,h,\vec{\mu}}$ as an $\est^\varnothing_{d,h,\vec{\mu}}$-equivariant weight.
\end{defi}

The following is an instance of Theorem 4.1.1 of \cite{DARB2}, and we will use this result later in Section 4.

\begin{thm}\label{thm: standard weight}
    The weight $\varpi_{d,h,\vec{\mu}}^\varnothing$ is a top-dimensional $\est^\varnothing_{d,h,\vec{\mu}}$-equivariant Minkowski weight, and its weak pushforward $\mathfrak{src}_*\varpi_{d,h,\vec{\mu}}^\varnothing$ is a $(3h+m-3)$-dimensional $\st_{g}$-equivariant tropical cycle, whenever it is non-trivial. 
\end{thm}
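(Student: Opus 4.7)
The plan is to treat this as a direct instance of Theorem 4.1.1 of \cite{DARB2}, so the strategy is to identify the two substantive claims and indicate how the hypotheses of that general theorem are met in the present setting. The statement splits into two assertions: first, that the standard weight $\varpi^\varnothing_{d,h,\vec{\mu}}$ defines a top-dimensional $\est^\varnothing_{d,h,\vec{\mu}}$-equivariant Minkowski weight on $\EST^\varnothing_{d,h,\vec{\mu}}$, and second, that its weak pushforward along $\mathfrak{src}\colon \est^\varnothing_{d,h,\vec{\mu}}\to\st_g$ is a well-defined $(3h+m-3)$-dimensional $\st_g$-equivariant tropical cycle.

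For the first assertion, $\est^\varnothing_{d,h,\vec{\mu}}$-equivariance is immediate from the definition: the value $\varpi^\varnothing_{d,h,\vec{\mu}}(\pi)$ depends only on the isomorphism class of the cover $\pi$, and fibers of $[\est^\varnothing_{d,h,\vec{\mu}}]$ are precisely such isomorphism classes. The substantive content is the balancing condition at codimension-one cones. A codimension-one cone of $\EST^\varnothing_{d,h,\vec{\mu}}$ corresponds to a discrete admissible cover whose target has exactly one distinguished edge that degenerates, lifting via admissibility to a collection of edges in the source; the adjacent top-dimensional cones enumerate all simultaneous smoothings of those source edges. The balancing identity must be checked locally at the pair of vertices created by the target degeneration, and reduces to an identity among the local rational connected Hurwitz numbers $H(V)$, the edge-weight products $\prod_{e\in E(G)}d_\pi(e)$, the correction factors $\mathrm{CF}(V)$, and the $\mathrm{lcm}$ factors in the denominator of \eqref{eq: standard weight}.

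The main obstacle, and the reason this result is genuine content rather than formal manipulation, is precisely this local identity: it is the tropical shadow of the classical degeneration formula for rational Hurwitz numbers, which expresses the Hurwitz number at a degenerated vertex as a sum over gluing data of products of Hurwitz numbers at the two new vertices, weighted by intermediate edge degrees and compatible multinomial coefficients. Matching the classical degeneration formula against the specific combination of lcm-denominators and $\mathrm{CF}$-factors in \eqref{eq: standard weight} is where all the bookkeeping lives; since this identity is exactly what is established in Theorem 4.1.1 of \cite{DARB2}, my plan is to cite it rather than reprove it.

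For the second assertion, once $\varpi^\varnothing_{d,h,\vec{\mu}}$ is known to be a top-dimensional equivariant Minkowski weight, the existence of the weak pushforward as an equivariant tropical cycle follows from the general calculus of pushforward along morphisms of linear poic-fibrations that are weakly proper in top dimension, developed in Section 3.5 of \cite{DARB1} and applied in the form recorded by Theorem 4.1.1 of \cite{DARB2}. The source morphism $\mathfrak{src}\colon \est^\varnothing_{d,h,\vec{\mu}}\to \st_g$ is weakly proper in top dimension by the very construction of $\EST^\varnothing_{d,h,\vec{\mu}}$ as a pullback through $(\mathfrak{ft}_{\{1,\dots,n\}}\circ \mathfrak{src})\times \mathfrak{trgt}$, and the relative dimension bookkeeping (coming from the $\Mtrop_{h,m}$-factor retained in the pullback) places the resulting pushforward in dimension $3h+m-3$, as claimed.
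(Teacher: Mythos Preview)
Your approach is correct and matches the paper's own treatment: the paper does not give an independent proof but states this theorem explicitly as ``an instance of Theorem 4.1.1 of \cite{DARB2}'' and invokes it as such. Your additional heuristic unpacking of the balancing condition via the degeneration formula for Hurwitz numbers and of the weak properness of $\mathfrak{src}$ is accurate and more detailed than what the paper provides, but the strategy is the same.
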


\subsection{Realization} A poic $\sigma$ carries naturally an underlying topological space given by the cone itself. More specifically, for a poic $(\sigma,N)$, we let $|\sigma|$ denote the topological space defined by $\sigma\subset N_\bbR$ with the Euclidean topology. A morphism of poics $f\colon \sigma\to\xi$ induces naturally a continuous map
$|f|\colon |\sigma|\to|\xi|$, and this association gives rise to a functor
\begin{equation}
    |\bullet|\colon \POIC\to \Top,\sigma\xrightarrow{f}\xi\mapsto |\sigma|\xrightarrow{|f|}|\xi|,\label{eqdefi: realization of poics}
\end{equation}
which we call the \emph{realization functor}, and we refer to $|\sigma|$ as the \emph{realization of $\sigma$} (analogously with morphisms). The realization functor \ref{eqdefi: realization of poics} can be extended naturally to poic-spaces. More precisely, if $\cX$ is a poic-space then we have the composition
\begin{equation}
    |\cX|\colon C_X\to\Top,p\mapsto |\cX(p)|.
\end{equation}
Since $C_X$ is essentially finite, the colimit of this functor is representable and a topological space representing this colimit is called \emph{the realization of $\cX$}. 
\section{Tropical covers and divisor theory of metric graphs}

\subsection{Tropical curves and their moduli spaces} Let $G$ be a discrete graph and let $\delta\colon E(G)\to \bbR_{>0}$ be a metric. The \emph{topological realization of $(G,\delta)$} is the metric space $\left|(G,\delta)\right|$ given by identifying an edge $e\in E(G)$ with the interval $[0,\delta(e)]$, and a leg $l\in L(G)$ with the half ray $\bbR_{\geq0}$. For a non-negative integer $n\geq0$ with $2g+n-2\geq 0$, a \emph{tropical $n$-marked curve $\Gamma$ of genus $g$} is a topological space obtained as the realization $\Gamma = \left|(G,\delta)\right|$, where $G$ is an object of $\bbG_{g,n}$ and $\delta \in \sigma_G^0$. If $n=0$, we say that $\Gamma$ is a \emph{metric graph}. We additionally use the following notation, if $G$ is an object of $\bbG_{g,n}$ and $\delta$ is a metric on $G$, then $\underline{\left|(G,\delta)\right|}$ denotes the metric graph given by $\left|(\ft_{\{1,\dots,n\}}(G),\ft_{\{1,\dots,n\}}(\delta))\right|$.

The \emph{moduli space of genus-$g$ $n$-marked tropical curves} $\cM_{g,n}^{\trop}$ is the realization of $\Mtrop_{g,n}$. This is a topological space whose points parameterize all tropical genus-$g$ $n$-marked tropical curves up to isometry. If $g=0$, then it is well known (\cite{GathmannKerberMarkwigTFMSTC}, \cite{SpeyerSturmfels}) that the topological space $\cM_{0,n}^{\trop}$ can be embedded as an $(n-3)$-dimensional simplicial fan through the distance map in a $\left(\binom{n}{2}-n\right)$-dimensional vector space. 

\subsection{Rational functions and divisors}
Throughout this section, let $\Gamma$ denote a metric graph. We revisit the definitions and terminology concerning the divisor theory of metric graphs, and gather several basic results. 

\begin{defi}
    A \emph{rational function} $f$ on a metric graph $\Gamma$ is a continuous piecewise integral affine linear function $f\colon \Gamma\to \bbR$. We denote the set of rational functions on $\Gamma$ by $\Rat(\Gamma)$. If $f\in \Rat(\Gamma)$ and $x\in \Gamma$, then the \emph{order of $f$ at $x$} is the integer $\ord_xf$ defined as:
    \begin{equation*}
        \ord_xf := \textnormal{sum of outgoing slopes of }f\textnormal{ at }x.
    \end{equation*}
\end{defi}

Since $\Gamma$ is compact, the integer $\ord_xf$ vanishes for all but finitely many $x\in \Gamma$ (namely, $\ord_xf=0$ if $f$ is locally linear at $x$). The usual tropical operations (we use max convention) make $\Rat(\Gamma)$ into a semiring without an additive identity element zero. Namely, if $f,g\in \Rat(\Gamma)$ then $f\oplus g :=\max\{f,g\}\in \Rat(\Gamma)$ and $f\odot g:= f+g\in \Rat(\Gamma)$, and these operations satisfy the semiring axioms without an additive identity element (this can of course be circumvented by adjoining the constant function with value $-\infty$).

\begin{defi}
    The \emph{group of divisors} $\Div(\Gamma)$ of $\Gamma$ is the free abelian group on the points of $\Gamma$, that is
    \begin{equation*}
        \Div(\Gamma) : = \bbZ\{\Gamma\}.
    \end{equation*}
    An element $D\in \Div(\Gamma)$ is called a \emph{divisor} of $\Gamma$. Given $x\in \Gamma$, we let $[x]\in\Div(\Gamma)$ denote the divisor determined by the point. In this regard, an arbitrary divisor $D\in\Div(\Gamma)$ is of the form
    \begin{equation*}
        D = \sum_{x\in \Gamma} D_x \cdot [x],
    \end{equation*}
    where the $D_x$ are integers, and all but finitely many are zero. If $D\in \Div(\Gamma)$, then the \emph{degree} of $D$ is the integer 
    \begin{equation*}
        \deg(D) := \sum_{x\in \Gamma} D_x.
    \end{equation*}
    By definition of $\Div(\Gamma)$, this assignment gives rise to a (surjective) linear map
    \begin{equation*}
        \deg : \Div(\Gamma)\to \bbZ.
    \end{equation*}
    A divisor $E\in \Div(\Gamma)$ is called \emph{effective} if $E_x\geq0$ for all $x\in \Gamma$. We let $\Eff(\Gamma)$ denote the submonoid of effective divisors of $\Gamma$.  
\end{defi}

Rational functions give rise to divisors by means of which the notion of linear equivalence is introduced. 

\begin{defi}
    For a rational function $f\in \Rat(\Gamma)$, the \emph{divisor associated to $f$} is defined as
    \begin{equation*}
        \div(f) = \sum_{x\in \Gamma}\ord_xf\cdot [x].
    \end{equation*}
    
    It is readily checked that if $f\in\Rat(\Gamma)$, then $\deg(\div(f))= 0$.
\end{defi}

\begin{remark}
    The map principal divisor map $\div\colon\Rat(\Gamma)\to \Div(\Gamma)$ is linear with respect to tropical multiplication. Namely, if $f,g\in \Rat(\Gamma)$, then $\div(f\odot g) = \div(f)+\div(g)$. 
\end{remark}
   
\begin{defi}
    Two divisors $D,D^\prime\in\Div(\Gamma)$ are called \emph{linearly equivalent} if there exists $f\in\Rat(X)$ such that 
    \begin{equation*}
        D-D^\prime = \div(f).
    \end{equation*}
\end{defi}

\begin{defi}
    Let $D\in\Div(\Gamma)$. The \emph{linear system} associated to $D$ is the set 
    \begin{equation*}
        \left|D\right| := \{E\in \Eff(\Gamma) : E= \div(f)+D \textnormal{ for some }f\in \Rat(\Gamma) \}.
    \end{equation*}
    The \emph{rank of $D$} is defined as:
    \begin{equation*}
        \rk (D) := \max\{k\in \bbZ : \left|D-E\right|\neq \varnothing, \textnormal{ for all }E\in\Eff(\Gamma)\textnormal{ with }\deg(E)=k\}.
    \end{equation*}
\end{defi}
\begin{lem}
    For arbitrary $D,D^\prime\in \Div(\Gamma)$ we have that
    \begin{equation}
        \rk(D)+\rk(D^\prime) \leq \rk(D+D^\prime).  \label{eq: inequality1}
    \end{equation}
    Furthermore, if $D^\prime$ is effective, then
    \begin{equation}
        \rk(D+D^\prime)\leq \rk(D)+\deg(D^\prime). \label{eq: inequality2}
    \end{equation}
\end{lem}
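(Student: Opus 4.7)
The plan is to derive both inequalities directly from the definition of rank, using the linearity of the principal-divisor map with respect to tropical multiplication (i.e.\ pointwise addition of rational functions), which was noted in the remark right above the lemma.

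For the inequality \eqref{eq: inequality1}, set $r=\rk(D)$ and $r'=\rk(D')$ and assume $r,r'\geq 0$ (the edge case where one rank is $-1$, under the standard convention $\rk(D)=-1$ when $|D|=\varnothing$, can be handled afterwards using \eqref{eq: inequality2}). Let $E\in\Eff(\Gamma)$ with $\deg(E)=r+r'$. The key combinatorial observation is that any effective integral divisor can be split: writing $E=\sum_{x\in\Gamma}E_x\cdot[x]$ with $E_x\in\bbZ_{\geq 0}$ summing to $r+r'$, we choose a partition of these multiplicities to obtain $E=E_1+E_2$ with $E_1,E_2\in\Eff(\Gamma)$ of degrees $r$ and $r'$ respectively. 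By the definitions of $\rk(D)$ and $\rk(D')$, one has $|D-E_1|\neq\varnothing$ and $|D'-E_2|\neq\varnothing$, so there exist $f,g\in\Rat(\Gamma)$ such that $D-E_1+\div(f)\in\Eff(\Gamma)$ and $D'-E_2+\div(g)\in\Eff(\Gamma)$. Summing these effective divisors and applying $\div(f)+\div(g)=\div(f\odot g)$, one gets $(D+D')-E+\div(f\odot g)\in\Eff(\Gamma)$, i.e.\ $|(D+D')-E|\neq\varnothing$. Since $E$ was arbitrary of degree $r+r'$, the conclusion $\rk(D+D')\geq r+r'$ follows.

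For \eqref{eq: inequality2}, the plan is induction on $\deg(D')$, with base $\deg(D')=0$ trivial since then $D'=0$. The inductive step reduces to the one-point case: we claim $\rk(D+[x])\leq \rk(D)+1$ for every $x\in\Gamma$. To see this, set $k=\rk(D+[x])$ and let $E'\in\Eff(\Gamma)$ with $\deg(E')=k-1$. Then $E:=E'+[x]$ is effective of degree $k$, so by definition of $k$ we have $|D+[x]-E|\neq\varnothing$; but $D+[x]-E=D-E'$, so $|D-E'|\neq\varnothing$. As $E'$ was arbitrary of degree $k-1$, this gives $\rk(D)\geq k-1$, proving the claim. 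The general case follows by writing an effective $D'$ with $\deg(D')=m$ as $D'=D''+[x]$ with $D''$ effective of degree $m-1$ and applying the one-point bound together with the inductive hypothesis: $\rk(D+D')\leq\rk(D+D'')+1\leq\rk(D)+\deg(D'')+1=\rk(D)+\deg(D')$.

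Neither direction presents a serious technical obstacle — both reduce to careful bookkeeping against the definition of $\rk$. The mildest subtlety is simply being explicit about the splitting of an effective divisor into pieces of prescribed degree in part (1), and being cautious about the $\rk = -1$ convention when invoking (2) to patch the boundary case of (1).
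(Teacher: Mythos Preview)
Your argument is correct and matches the paper's for \eqref{eq: inequality1}; for \eqref{eq: inequality2} the paper skips the induction and directly takes $E\in\Eff(\Gamma)$ of degree $\rk(D+D')-\deg(D')$ and sets $E':=E+D'$, which is exactly your one-point step carried out in a single shot for general effective $D'$. One caution: your parenthetical about patching the $\rk=-1$ case via \eqref{eq: inequality2} does not actually work (indeed \eqref{eq: inequality1} can fail when a rank equals $-1$, e.g.\ $D=-3[p]$, $D'=2[p]$ on a genus-$1$ graph), but the paper's proof likewise tacitly restricts to nonnegative ranks, so this is not a discrepancy between the two.
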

\begin{proof}
    For \eqref{eq: inequality1} we show that if $\rk(D)\geq k$ and $\rk(D^\prime)\geq k^\prime$, then $\rk(D+D^\prime)\geq k+k^\prime$. Suppose $\tilde{E}$ is an effective divisor of degree $k+k^\prime$, and write $\tilde{E} = E+E^\prime$ where both $E$ and $E^\prime$ are effective and $\deg(E)=k$, $\deg(E^\prime) = k^\prime$.\\
    Since $\rk(D)\geq k$, there is $f\in \Rat(\Gamma)$ such that $D-E +\div(f)\geq 0$. Analogously, there is $f^\prime\in \Rat(\Gamma)$ such that $D^\prime-E^\prime+\div(f^\prime)\geq0$. It then follows that 
    \begin{align*}
        (D+D^\prime)-\tilde{E} + \div(f\odot f^\prime) &=(D+D^\prime)-\tilde{E} + \div(f) +\div(f^\prime)\\
        &= D+D^\prime-E-E^\prime + \div(f) +\div(f^\prime)\\
        &=D-E+\div(f) + D^\prime-E^\prime+\div(f^\prime)\geq 0.
    \end{align*}
    In conclusion, $\rk(D+D^\prime)\geq k+k^\prime$, as $\tilde{E}$ was arbitrary. \\
    For \eqref{eq: inequality2}, let $D^\prime$ be effective. We proceed analogously and show that if $\rk(D+D^\prime)\geq k$, then $\rk(D)\geq k-\deg(D^\prime)$. If $k-\deg D^\prime\leq -1$, then clearly $\rk(D)\geq k-\deg D^\prime$. So we can assume without loss of generality that $k-\deg D^\prime\geq 0$. Let $E\in \Eff(\Gamma)$ be arbitrary with $\deg(E) = k-\deg D^\prime$. Then $E^\prime := E+D^\prime$ is effective and $\deg(E^\prime) = k$. Given that $\rk(D+D^\prime)\geq k$, there exists $f\in \Rat(\Gamma)$ such that $D+D^\prime-E^\prime +\div(f)\geq0$. Since $D+D^\prime-E^\prime = D-E$, it follows that $D-E+\div(f)\geq 0$.  Therefore $\rk(D) \geq \rk(D+D^\prime)-\deg(D^\prime)$, because $E$ was arbitrary.
\end{proof}
We close with a definition and a well-known theorem intrinsic to the whole divisor theory of metric graphs.
\begin{defi}
    The \emph{canonical divisor $K_\Gamma$ on $\Gamma$} is 
\begin{equation*}
    K_\Gamma:= \sum_{x\in \Gamma}(\val(x)-2)\cdot[x].
\end{equation*}
\end{defi}

\begin{thm}[Tropical Riemann-Roch theorem \cite{GathmannKerberARRTITG}, \cite{MikhalkinZharkovTCTJATF}]
For an arbitrary divisor $D$ of $\Gamma$ the following equation holds:
\begin{equation*}
    \rk(D) -\rk(K_\Gamma-D) = \deg(D)-(g(\Gamma)-1).
\end{equation*}
\end{thm}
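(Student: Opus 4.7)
The plan is to follow the strategy pioneered by Baker--Norine for finite graphs and adapted to the metric setting. The main tool is the theory of $v$-reduced divisors: for a fixed base point $v \in \Gamma$, every divisor class contains a unique $v$-reduced representative, which is effective away from $v$. Moreover, a divisor class contains an effective divisor if and only if its $v$-reduced representative has non-negative coefficient at $v$, and this representative can be computed via a metric analogue of Dhar's burning algorithm.

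This machinery immediately yields Riemann's inequality $\rk(D) \geq \deg(D) - g$. Indeed, if $\deg(D) \geq g$, then a $v$-reduced divisor of degree $\deg(D)$ has non-negative coefficient at $v$ (since the sum of the other coefficients is at most $g - 1$), so it is equivalent to an effective divisor; applying this to $D - E$ for arbitrary effective $E$ of degree $\deg(D) - g$ gives the claim. The second key ingredient is the existence of a \emph{non-special divisor} $N$ of degree $g - 1$, meaning $|N| = \varnothing$. In the discrete case one constructs $N$ from an acyclic orientation $\mathcal{O}$ with a unique source via $N = \sum_v (\indeg_{\mathcal{O}}(v) - 1)[v]$; in the metric setting this is lifted to a sufficiently fine rational model of $\Gamma$, using that $\rk$ is subdivision-invariant. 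Reversing the orientation $\mathcal{O}$ produces another non-special divisor $N'$ with $K_\Gamma - N \sim N'$, so the family of non-special divisors is self-dual under $D \mapsto K_\Gamma - D$.

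With these ingredients, the theorem follows from the Baker--Norine characterization
\begin{equation*}
    \rk(D) + 1 = \min\{\deg(E) : E \in \Eff(\Gamma),\ |D - E| = \varnothing\}.
\end{equation*}
The condition $|D - E| = \varnothing$ is equivalent, via the non-special $N$, to the existence of a decomposition $D - E \sim N - F$ with $F \in \Eff(\Gamma)$; dualizing this statement under $D \mapsto K_\Gamma - D$ and invoking both the self-duality of non-special divisors and the identity $\deg(D) + \deg(K_\Gamma - D) = 2g - 2$ produces the desired equality $\rk(D) - \rk(K_\Gamma - D) = \deg(D) - (g - 1)$.

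The main obstacle will be the passage from the discrete Baker--Norine framework to metric graphs: the reduced divisor theory, the chip-firing procedure, and the non-special divisor construction must be shown to be compatible with refinement of models and continuous variation of edge lengths. Once invariance of $\rk$ under subdivision is verified, and the non-special divisor can be produced uniformly on a cofinal system of rational models of $\Gamma$, the discrete combinatorial arguments transport verbatim to the metric setting.
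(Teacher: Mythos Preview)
The paper does not prove this theorem at all: it is stated with attribution to \cite{GathmannKerberARRTITG} and \cite{MikhalkinZharkovTCTJATF} and used as a black box. So there is no ``paper's own proof'' to compare against; your proposal is supplying an argument where the author deliberately chose to cite one.

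That said, your outline is precisely the strategy of the cited references, in particular Gathmann--Kerber. The reduction to the Baker--Norine machinery via $v$-reduced divisors, the construction of non-special degree-$(g-1)$ divisors from acyclic orientations of a model, and the self-duality of the non-special locus under $D\mapsto K_\Gamma-D$ are exactly the ingredients used there. One point where your sketch is a bit loose: the sentence ``the condition $|D-E|=\varnothing$ is equivalent, via the non-special $N$, to the existence of a decomposition $D-E\sim N-F$ with $F\in\Eff(\Gamma)$'' is not literally the lemma one proves. What is actually established (and what makes the duality argument work) is that a degree-$(g-1)$ divisor has empty linear system if and only if it is linearly equivalent to some orientation-induced non-special divisor $N$; equivalently, for any $D$ one has $\rk(D)+1=\min_N \deg^+(D-N)$ where the minimum runs over the finitely many such $N$ and $\deg^+$ denotes the effective part. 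From this formula the symmetry under $D\mapsto K_\Gamma-D$ is immediate once one knows $K_\Gamma-N$ is again of this type. Your phrasing conflates this with the trivial rank characterization and would benefit from stating the non-special lemma explicitly.

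The passage from discrete to metric is handled in \cite{GathmannKerberARRTITG} not by a limiting argument over rational models, but by working directly with continuous chip-firing (rational functions on $\Gamma$) and a continuous Dhar procedure; rank is defined intrinsically on the metric graph, so no subdivision-invariance statement is needed. Your last paragraph slightly overstates the difficulty here.
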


\subsection{Tropical admissible covers} Let $\Gamma$ and $\Delta$ denote two metric graphs. We now recall the definition of tropical admissible cover and explain their relationship with discrete admissible covers.

\begin{nota}
For a point $x\in \Gamma$, we let $T_x\Gamma$ denote the set of outgoing unit tangent vectors at $x$.
\end{nota}
\begin{defi}
    A \emph{tropical admissible cover} $\pi\colon \Gamma\to \Delta$ is a continuous surjective map with finite fibers that is locally affine integral and satisfies the following conditions:
     \begin{itemize}
        \item For every $x\in \Gamma$ the following integer is well defined (does not depend on the choice of $\nu^\prime\in T_{\pi(x)}\Delta$): 
        \begin{equation*}
            d_\pi(x) := \sum_{\nu\in T_x\Gamma \cap \pi^{-1}(\nu^\prime)}\left|\mathrm{slope}_\nu(\pi)\right|,
        \end{equation*}
         where $\nu^\prime\in T_{\pi(x)}\Delta$. Here, $\mathrm{slope}_\nu(\pi)$ denotes the integral slope of the map $\pi$ along $\nu$. We refer to $d_\pi(x)$ as the \emph{local degree of $\pi$ at $x\in\Gamma$}.
        
        \item For every $x\in \Gamma$ we have that (non-negativity of the Riemann-Hurwitz number)
        \begin{equation*}
            r_\pi(x):=\val(x)-2-d_\pi(x)\cdot\left(\val\pi(x)-2\right)\geq 0.
        \end{equation*}
        We refer to $r_\pi(x)$ as the \emph{Riemann-Hurwitz number of $\pi$ at $x\in \Gamma$}.
     \end{itemize}
   Analogous to the case of discrete admissible covers (Lemma 3.1.1 of \cite{DARB2}), the weighted sum of the fibers of points of $\Delta$ with weight given by the local degree is a constant integer $\deg(\pi)$ which we call the \emph{degree of $\pi$}.
\end{defi}
A way of producing tropical admissible covers is by means of discrete admissible covers and a metric on the target graph. For this we are forced to introduce an analog of the edge-length matrix of \cite{VargasDraisma}. This matrix is used to determine a metric on the source of a discrete admissible cover out of a metric on the corresponding target in such a way that the map between the corresponding geometric realizations (after forgetting the marked legs) is a tropical admissible cover. 
\begin{const}\label{const: dac to tac}
    Let $\pi\colon G_1\to G_2$ be a discrete admissible cover, and let $\delta$ denote a metric on $G_2$. The \emph{induced metric} $I_\pi(\delta)$ on $G_1$ is given by
    \begin{equation}
        I_\pi(\delta) (e) := \frac{\delta(\pi(e))}{d_\pi(e)}, \textnormal{ for }e\in E(G_1).\label{eq: edgelength matrix}
    \end{equation}
    If $\Gamma_1:= \underline{\left| (G_1,I_\pi(\delta))\right|}$ and $\Gamma_2:= \underline{\left| (G_2,\delta)\right|}$, then $\pi$ naturally gives rise to a continuous surjective map with finite fibers $\underline{\left|\pi\right|}\colon \Gamma_1\to \Gamma_2$ that is locally affine integral. We claim that $\underline{\left|\pi \right|}$ is a tropical admissible cover. Indeed, the first condition on tropical admissible covers follows directly from its discrete analog, whereas the second condition follows from the corresponding equality and the fact that for $V\in G_1$ and $e\in E(G_1)\cap r_{G_1}^{-1}(V)$: 
    \begin{equation*}
        \#r_{G_1}^{-1}(V)\cap \pi^{-1}(\pi(e)) \leq d_\pi(V). 
    \end{equation*}
\end{const}
\begin{remark}
    Let $\pi$ be as in the previous construction. The matrices $F_\pi$ and $I_\pi$ are naturally different, but nonetheless related. The columns of $I_\pi$ and $F_\pi$ are given by edges of $\trgt(\pi)$. The relationship between the matrices $F_\pi$ and $I_\pi$ is the following: the matrix $F_\pi$ is obtained from $I_\pi$ by multiplying the row indexed by $h\in E(G_2)$ by a factor of $\mathrm{lcm}(d_\pi(e))_{e\in\pi^{-1}(\pi(h))}$. In other words, $F_\pi =  I_\pi\cdot D_\pi$ where $D_\pi\colon \bbR^{E(G_2)}\to \bbR^{E(G_2)}$ is the diagonal matrix where for $h\in E(G_2)$ the corresponding diagonal entry is $\mathrm{lcm}(d_\pi(e))_{e\in\pi^{-1}(\pi(h))}$. Furthermore, the composition $\left(\ft_{\{1,\dots,n\},\src(\pi)}\circ I_\pi \right)$ is the edge-length matrix of \cite{VargasDraisma}.
\end{remark}
\begin{prop}
    Every tropical admissible cover arises from a discrete admissible cover as in Construction \ref{const: dac to tac}.
\end{prop}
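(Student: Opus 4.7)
The plan is to read off a discrete admissible cover directly from the combinatorial and slope data of $\pi$, and then to absorb the positive Riemann-Hurwitz contributions by adjoining marked legs on both sides, thereby upgrading the tropical non-negativity to the sharp vanishing required of a discrete admissible cover. Concretely, I first choose a combinatorial model $G_2$ of $\Delta$ whose vertex set consists of every $y\in\Delta$ that is $\geq 3$-valent in $\Delta$ or whose fiber $\pi^{-1}(y)$ contains a point $x$ with $\val(x)\geq 3$ or $r_\pi(x)>0$; this is a finite set because $\pi$ has finite fibers and $\Gamma$, $\Delta$ are compact. Setting $V(G_1):=\pi^{-1}(V(G_2))$ and letting the edges of $G_1$ be the connected components of $\pi^{-1}(E(G_2)\cup L(G_2))\setminus V(G_1)$ produces a model $G_1$ of $\Gamma$ and a non-contracting map of graphs $\tilde{\pi}\colon G_1\to G_2$. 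No slope change of $\pi$ can occur in the interior of an edge of $G_1$, so the assignment $d_{\tilde{\pi}}(e):=|\mathrm{slope}_\pi|$ on edges and $d_{\tilde{\pi}}(V):=d_\pi(V)$ on vertices is well defined, and the tropical well-definedness of $d_\pi$ at each vertex is exactly the edge-balancing axiom of a discrete admissible cover.

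The main obstacle is the remaining axiom, which demands $\val(V)-2 = d_{\tilde{\pi}}(V)(\val(\tilde{\pi}V)-2)$ at every $V\in V(G_1)$, whereas the tropical hypothesis only yields $r_\pi(V)\geq 0$. I will resolve this by adjoining marked legs: for each $W\in V(G_2)$ I set
\[
k_W\ :=\ \max_{V\in\pi^{-1}(W)} r_\pi(V),
\]
adjoin $k_W$ new legs at $W$, and at each $V\in\pi^{-1}(W)$ adjoin $k'_V:=d_{\tilde{\pi}}(V)\,k_W - r_\pi(V)$ new legs, distributed so that each new leg of $W$ receives at least one preimage at $V$ and the weights of the preimages of any fixed leg of $W$ partition $d_{\tilde{\pi}}(V)$. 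Such a distribution exists because the bounds $k_W\leq k'_V\leq d_{\tilde{\pi}}(V)\,k_W$ hold: the lower bound uses $(d_{\tilde{\pi}}(V)-1)\,k_W\geq r_\pi(V)$, valid for $d_{\tilde{\pi}}(V)\geq 2$ by the choice of $k_W$, while the case $d_{\tilde{\pi}}(V)=1$ already forces $r_\pi(V)=0$ by the tropical admissibility of $\pi$.

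A direct substitution then yields $(\val(V)+k'_V)-2 = d_{\tilde{\pi}}(V)\bigl((\val(W)+k_W)-2\bigr)$ at every $V$, so the augmented map $\tilde{\pi}$ is a bona fide discrete admissible cover. Equipping $G_2$ with the metric $\delta$ inherited from $\Delta$, formula \eqref{eq: edgelength matrix} yields the induced metric $I_{\tilde{\pi}}(\delta)$ on $G_1$, which agrees by construction with the length function on $G_1$ inherited from $\Gamma$. Applying Construction \ref{const: dac to tac} to $(\tilde{\pi},\delta)$ and forgetting the adjoined legs thus recovers $\pi\colon\Gamma\to\Delta$. The only delicate step is the leg-distribution bookkeeping in the previous paragraph; the rest of the argument is essentially combinatorial.
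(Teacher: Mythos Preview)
Your proof is correct and follows the same strategy as the paper: pass to a discrete model of the tropical cover and then graft marked legs on both sides to convert the non-negative Riemann--Hurwitz numbers into exact vanishing. The paper performs the grafting iteratively---for each $V_i$ in a fiber $\pi^{-1}(W)$ it adds $r_\pi(V_i)$ new legs at $W$ with simple ramification profile $(2,1^{d-2})$ over them---whereas you do it in a single pass with $k_W=\max_{V}r_\pi(V)$ legs at $W$ and an abstract partition argument for the preimages; your version is slightly more economical in the number of legs but trades the explicit profiles for the bookkeeping inequality $k_W\leq k'_V\leq d_{\tilde\pi}(V)k_W$. One small caveat: your prescription for $V(G_2)$ omits $1$-valent points of $\Delta$ and can fail to produce a combinatorial model at all (e.g.\ when $\Delta$ is a circle and $\pi$ is unramified); you should enlarge the vertex set to include such points, which is a trivial fix and does not affect the argument.
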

\begin{proof}
    Let $\pi\colon \Gamma\to \Delta$ denote a tropical admissible cover, and let $G_\Gamma$ and $G_\Delta$ be combinatorial models for $\Gamma$ and $\Delta$ respectively, such that $\pi^{-1}(V(G_\Delta)) = V(G_\Gamma)$. We also assume that $\pi$ is integral linear at every edge of $E(G_\Gamma)$. By itself, the tropical admissible cover $\pi$ gives rise to a harmonic morphism (we abuse notation and denote it by the same letter) $\pi\colon G_\Gamma\to G_\Delta$, where every RH number is non-negative. The proof proceeds by grafting legs to vertices of $G_\Delta$ and its fibers, to make these RH numbers vanish. It is sufficient to explain the procedure at $W\in V(G_\Delta)$. Let $\pi^{-1}(W)=\{V_1,\dots, V_k\}$, and suppose $r_\pi(V_1) = n$. Then we graft $n$ legs to $W$, say $\ell_1$, to $\ell_n$. Now, to each $V_j$ with $j>1$, we graft $d_\pi(V_j)\cdot n$ legs with weight one, where the first $n$ legs lie above $\ell_1$, the next $n$ legs lie above $\ell_2$, and so on. We observe that this does not modify the RH number at $V_j$. In the case of $V_1$ we graft $n$ legs of weight $2$, one above each $\ell_i$, and $(d_\pi(V_1)-2)\cdot n$ legs of weight $1$, where the first $n$ lie above $\ell_1$, the next $n$ lie above $\ell_2$, and so on. We now observe that the valency of $V_i$ has been increased by $(d_\pi(V_1)-1)\cdot n$ and the valency of $W$ has increased by $n$. Therefore, in the RH number of the modification, the grafting of these legs cancels the previous $n$. We now proceed with $V_2$ and the remaining $V_j$'s in the exact same way.
\end{proof}

\subsection{Pull-back and push-forward of divisors} We collect several results on the pull-back and push-forward of divisors through tropical admissible covers. These results are well-known, and since they are central to our endeavors, we provide proofs and concrete statements.

\begin{const}
    Let $\pi\colon \Gamma\to\Delta$ be a tropical admissible cover. For $x\in \Delta$ we set 
    \begin{equation*}
        \pi^*[x] := \sum_{y\in\pi^{-1}(x)} d_\pi(y)\cdot [y],
    \end{equation*}
    and extend linearly to obtain a linear map $\pi^*\colon \Div(\Delta)\to \Div(\Gamma)$, which we call the \emph{pull-back morphism}. The \emph{ramification divisor of $\pi$} is the divisor on $\Gamma$ defined by $R_\pi := \sum_{x\in\Gamma}r_\pi(x)$.
\end{const}
\begin{lem}
    For a tropical admissible cover $\pi\colon \Gamma\to \Delta$ the ramification divisor is effective and satisfies:
    \begin{equation*}
        K_\Gamma = \pi^*K_\Delta+R_\pi.
    \end{equation*}
\end{lem}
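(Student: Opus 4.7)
The plan is to verify both claims by direct point-by-point computation, since the defining equation for $r_\pi$ already encodes exactly the relation we want.

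First I would address effectiveness: by the very definition of a tropical admissible cover, the Riemann--Hurwitz number $r_\pi(x) = \val(x)-2-d_\pi(x)\bigl(\val(\pi(x))-2\bigr)$ is required to be non-negative at every $x \in \Gamma$. Hence all coefficients of $R_\pi$ are non-negative, so $R_\pi$ is effective. I would also briefly note that $R_\pi$ is a well-defined divisor: choosing combinatorial models $G_\Gamma$ and $G_\Delta$ of $\Gamma$ and $\Delta$ with $\pi^{-1}(V(G_\Delta)) = V(G_\Gamma)$ (which is possible because the non-negativity of $r_\pi$ forces $\pi$ to send $2$-valent points to $2$-valent points), only finitely many vertices can contribute a non-zero term.

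For the canonical divisor equality, I would compare the coefficient of $[x]$ on both sides for each $x \in \Gamma$. On the left, the coefficient of $[x]$ in $K_\Gamma$ is $\val(x) - 2$. On the right, by the definition of the pullback,
\begin{equation*}
\pi^*K_\Delta \;=\; \sum_{y \in \Delta} (\val(y)-2)\,\pi^*[y] \;=\; \sum_{y \in \Delta} (\val(y)-2)\sum_{x \in \pi^{-1}(y)} d_\pi(x)\,[x],
\end{equation*}
so the coefficient of $[x]$ in $\pi^*K_\Delta$ is $d_\pi(x)\cdot(\val(\pi(x))-2)$. Adding the coefficient $r_\pi(x)$ of $[x]$ in $R_\pi$, the total is
\begin{equation*}
d_\pi(x)\bigl(\val(\pi(x))-2\bigr) + \bigl(\val(x)-2-d_\pi(x)(\val(\pi(x))-2)\bigr) \;=\; \val(x)-2,
\end{equation*}
matching the coefficient in $K_\Gamma$. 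Summing over $x$ yields the desired divisorial identity.

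There is essentially no obstacle here: the argument is a tautological rearrangement of the local Riemann--Hurwitz identity that is built into the definition of a tropical admissible cover. The only care needed is in checking well-definedness of the divisors involved (finiteness of support), which follows immediately from choosing a sufficiently refined combinatorial model and from the observation that $r_\pi$ vanishes at every point where $\val(x)=2$ and $\val(\pi(x))=2$.
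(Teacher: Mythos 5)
Your proof is correct and follows essentially the same route as the paper: effectiveness is immediate from the non-negativity of the Riemann--Hurwitz numbers built into the definition of a tropical admissible cover, and the divisor identity is a pointwise comparison of coefficients that rearranges the defining formula for $r_\pi(x)$. The extra remarks on finiteness of support are a harmless (and reasonable) addition that the paper leaves implicit.
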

\begin{proof}
    This result follows from a direct computation. Namely, we observe that
    \begin{align*}
        K_\Gamma-\pi^*K_\Delta = \sum_{y\in \Gamma}(\val(y)-2 - d_\pi(y)(\val(\pi(y))-2))\cdot [y] = \sum_{y\in \Gamma }r_\pi(y)\cdot[y],
    \end{align*}
    which is the desired equality.
\end{proof}

\begin{prop}\label{prop: pullback}
    Suppose $\pi\colon\Gamma\to\Delta$ is a tropical admissible cover. Then the pull-back morphism maps effective divisors to effective divisors and preserves linear equivalence. Furthermore, if $D\in \Div(\Delta)$ then $\deg(\pi^*D) = \deg(\pi)\cdot \deg(D)$.
\end{prop}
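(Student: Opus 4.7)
The claim has three parts, and I would handle them in the order of increasing difficulty. For effectiveness: the formula $\pi^{*}[x]=\sum_{y\in\pi^{-1}(x)}d_\pi(y)\cdot[y]$ has non-negative integer coefficients because each local degree $d_\pi(y)$ is a positive integer, so extending linearly shows that effective divisors pull back to effective ones. For the degree identity: by linearity it suffices to verify it on generators $[x]\in\Div(\Delta)$, and there $\deg(\pi^{*}[x])=\sum_{y\in\pi^{-1}(x)}d_\pi(y)$, which equals $\deg(\pi)$ by the remark immediately following the definition of tropical admissible cover (the weighted fiber sum is constant); the general statement then follows by linearity.

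The main content is preservation of linear equivalence, and the hard part is identifying the pullback of a principal divisor. The strategy is to prove the pointwise identity
\begin{equation*}
\pi^{*}\div(f)=\div(f\circ\pi) \qquad \text{for every } f\in\Rat(\Delta).
\end{equation*}
Once this is established, if $D-D'=\div(f)$ then $\pi^{*}D-\pi^{*}D'=\pi^{*}\div(f)=\div(f\circ\pi)$, which is a principal divisor (note $f\circ\pi\in\Rat(\Gamma)$ because $\pi$ is continuous and locally affine integral and $f$ is continuous piecewise integral affine).

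To prove the pointwise identity, I would fix $y\in\Gamma$ and compute $\ord_y(f\circ\pi)$ tangent-by-tangent. Since $\pi$ is locally affine integral, for each outgoing tangent vector $\nu\in T_y\Gamma$ the outgoing slope of $f\circ\pi$ along $\nu$ equals $|\mathrm{slope}_\nu(\pi)|$ times the outgoing slope of $f$ along $\pi_{*}\nu\in T_{\pi(y)}\Delta$. Summing over $\nu\in T_y\Gamma$ and then grouping tangent vectors by their image under $\pi$ yields
\begin{equation*}
\ord_y(f\circ\pi)=\sum_{\nu'\in T_{\pi(y)}\Delta}\Bigl(\sum_{\nu\in T_y\Gamma\cap\pi^{-1}(\nu')}|\mathrm{slope}_\nu(\pi)|\Bigr)\cdot s_{\nu'}(f)=d_\pi(y)\sum_{\nu'\in T_{\pi(y)}\Delta}s_{\nu'}(f)=d_\pi(y)\cdot\ord_{\pi(y)}(f),
\end{equation*}
where the middle equality is precisely the well-definedness condition in the definition of tropical admissible cover.

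Finally, I would compare the two sides of the asserted identity. On the one hand, $\div(f\circ\pi)=\sum_{y\in\Gamma}d_\pi(y)\ord_{\pi(y)}(f)\cdot[y]$. On the other hand,
\begin{equation*}
\pi^{*}\div(f)=\sum_{x\in\Delta}\ord_x(f)\cdot\pi^{*}[x]=\sum_{x\in\Delta}\sum_{y\in\pi^{-1}(x)}d_\pi(y)\ord_x(f)\cdot[y],
\end{equation*}
which after reindexing is the same expression. I expect the tangent-vector computation to be the only substantive obstacle; the rest is bookkeeping.
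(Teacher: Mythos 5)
Your proposal is correct and follows essentially the same route as the paper: all three claims are handled identically, with the key step being the identity $\pi^*\div(f)=\div(f\circ\pi)$ reduced to the pointwise formula $\ord_y(f\circ\pi)=d_\pi(y)\cdot\ord_{\pi(y)}(f)$. The only difference is that the paper dispatches that formula as ``the usual chain rule from calculus,'' whereas you spell out the tangent-by-tangent computation and its reliance on the well-definedness of the local degree --- a welcome elaboration, not a divergence.
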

\begin{proof}
    The claim on effective divisors follows directly from the definition of the pull-back morphism. For the preservation of linear equivalence it is sufficient to show that $\pi^*\div (f) = \div(\pi^*f)$ for an arbitrary $f\in \Rat(\Delta)$, where $\pi^*f\in\Rat(\Gamma)$ is given by $f\circ \pi$. Let $f\in\Rat(\Delta)$ and observe that, by definition, we have
    \begin{align*}
        \div(f) &= \sum_{x\in \Delta} \left(\ord_x f\right)\cdot[x],\\
        \pi^*\div(f)&=\sum_{y\in\Gamma} \left(\ord_{\pi(y)} f\right)\cdot d_\pi(y)\cdot[y],\\
        \div(\pi^*f) &= \sum_{y\in\Gamma} \left(\ord_y(\pi^*f)\right)\cdot[y].
    \end{align*}
    The usual chain rule from calculus shows that $\ord_y(\pi^*f) = \ord_{\pi(y)} f\cdot d_\pi(y)$ for $y\in \Gamma$, and therefore $\pi^*\div(f) = \div(\pi^*f)$. If $D\in\Div(\Delta)$, then 
    \begin{equation*}
        \deg(\pi^*D) = \sum_{x\in \Delta}\sum_{y\in\pi^{-1}(x)}d_\pi(y) D_x=\sum_{x\in \Delta}\left(\sum_{y\in\pi^{-1}(x)}d_\pi(y)\right) D_x= \deg(\pi)\cdot \deg(D).
    \end{equation*}
\end{proof}
\begin{const}
    Let $\pi\colon \Gamma\to\Delta$ be a tropical admissible cover. For $y\in \Gamma$ we set $\pi_*[y] := [\pi(y)]$, and extend linearly to obtain a linear map $\pi_*\colon\Div(\Gamma)\to\Div(\Delta)$, which we call the \emph{pushforward morphism}. Similarly, if $f\in \Rat(\Gamma)$, then we let $\pi_*f\colon\Delta\to \bbR$ denote the function
    \begin{equation*}
        \pi_*f(x):=\sum_{y\in \pi^{-1}(x)} f(y).
    \end{equation*}
    It is readily seen that $\pi_*f\in \Rat(\Delta)$ and, furthermore, $\pi_*\div(f) = \div(\pi_*f)$.
\end{const}
\begin{prop}\label{prop: pushforward}
    The pushforward morphism preserves effective divisors and linear equivalence. If $D\in\Div(\Gamma)$, then $\pi_*D = \deg(D)$ and $\rk(D)\leq \rk(\pi_*D)$.
\end{prop}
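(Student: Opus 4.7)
The plan is to verify each claim in turn; the first three are essentially formal, while the rank inequality requires a lifting argument.

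For the preservation of effective divisors, observe that by linearity the pushforward sends $D = \sum_y D_y[y]$ with $D_y \geq 0$ to $\pi_*D = \sum_y D_y [\pi(y)]$, whose coefficient at any point $x \in \Delta$ is the non-negative sum $\sum_{y \in \pi^{-1}(x)} D_y$. For linear equivalence, the identity $\pi_*\div(f) = \div(\pi_*f)$ recorded in the preceding construction immediately gives that $D - D' = \div(f)$ on $\Gamma$ implies $\pi_*D - \pi_*D' = \div(\pi_*f)$ on $\Delta$. For the degree, a direct computation yields
\begin{equation*}
    \deg(\pi_*D) = \sum_{y \in \Gamma} D_y = \deg(D).
\end{equation*}

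The main point is the rank inequality $\rk(D) \leq \rk(\pi_*D)$. Suppose $\rk(D) \geq k$, and let $E \in \Eff(\Delta)$ be arbitrary with $\deg(E) = k$. Write $E = \sum_x E_x \cdot [x]$, and for each $x \in \supp(E)$ pick an arbitrary preimage $y_x \in \pi^{-1}(x)$ (which is possible since $\pi$ is surjective with non-empty fibers). Set
\begin{equation*}
    \tilde{E} := \sum_{x \in \supp(E)} E_x \cdot [y_x] \in \Eff(\Gamma),
\end{equation*}
which is effective of degree $k$ and satisfies $\pi_* \tilde{E} = E$ by construction. Since $\rk(D) \geq k$, there exists $f \in \Rat(\Gamma)$ with $D - \tilde{E} + \div(f) \geq 0$. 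Applying $\pi_*$ and using that the pushforward preserves effectiveness together with $\pi_*\div(f) = \div(\pi_*f)$, we obtain
\begin{equation*}
    \pi_*D - E + \div(\pi_*f) = \pi_*\bigl(D - \tilde{E} + \div(f)\bigr) \geq 0.
\end{equation*}
Since $E$ was arbitrary, this shows $\rk(\pi_*D) \geq k$, completing the proof.

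The only nontrivial step is the rank inequality, and its content is concentrated in the fiber-wise lifting $E \mapsto \tilde{E}$ of effective divisors from $\Delta$ to $\Gamma$. No obstacle arises, because the finite-fiber surjectivity of tropical admissible covers makes such a lift immediate and degree-preserving; the rest is formal manipulation using the identities already established.
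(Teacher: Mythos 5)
Your proposal is correct and follows essentially the same route as the paper: the first three claims by the same formal manipulations, and the rank inequality by lifting an arbitrary effective degree-$k$ divisor $E$ on $\Delta$ to an effective $\tilde{E}$ on $\Gamma$ with $\pi_*\tilde{E}=E$ and pushing forward the resulting effective divisor. The only difference is that you spell out the choice of preimages $y_x$ explicitly, where the paper simply asserts the existence of such a lift.
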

\begin{proof}
    The preservation of effective divisors follows from the definition of $\pi_*$, whereas the preservation of linear equivalence follows from the previous observation: if $f\in \Rat(\Gamma)$, then $\pi_*\div(f) = \div(\pi_*f)$. If $D\in \Div(\Gamma)$, then 
    \begin{equation*}
        \deg(\pi_*D) =\sum_{x\in \Delta}\left(\sum_{y\in\pi^{-1}(x)}D_y\right) = \sum_{y\in \Gamma}D_y = \deg(D).
    \end{equation*}
    To show the rank inequality, we show that if $\rk(D)\geq k$, then $\rk(\pi_*D)\geq k$. Let $E$ be an effective divisor on $\Delta$ with $\deg E= k$, and let $E^\prime$ be a effective divisor on $\Gamma$ with $\pi_*E^\prime= E$. In particular, $\deg E^\prime =k$ and, by definition, there exists $f\in \Rat(\Gamma)$ such that $D-E^\prime +\div(f) \geq 0$. Therefore $\pi_*D-E+\div(\pi_*f)$ is effective, and since $E$ was arbitrary, it follows that $\rk (\pi_*D)\geq k$.
\end{proof}

\begin{prop}\label{prop: rank of pullback}
    If $\pi\colon \Gamma\to \Delta$ is a tropical admissible cover, then for any $D\in \Div(\Delta)$ we have that $\rk( \pi^* D) \geq \rank(D)$. In particular, if $\Delta$ is a tree, then for any $x\in \Delta$ we have that $\rk(\pi^*[x])\geq 1$.
\end{prop}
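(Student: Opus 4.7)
The plan is to mimic the strategy used earlier for the inequalities \eqref{eq: inequality1} and \eqref{eq: inequality2}: I will show that $\rk(D)\geq k$ implies $\rk(\pi^*D)\geq k$. Fix an effective divisor $E^\prime\in\Eff(\Gamma)$ of degree $k$; the goal is to produce a rational function $F\in\Rat(\Gamma)$ with $\pi^*D-E^\prime+\div(F)\geq 0$.

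The first step is to transport $E^\prime$ down to $\Delta$. By Proposition \ref{prop: pushforward}, the divisor $\pi_*E^\prime$ is effective on $\Delta$ and has degree $k$. Since $\rk(D)\geq k$, there exists $f\in\Rat(\Delta)$ with $D-\pi_*E^\prime+\div(f)\geq 0$. Now pull this inequality back via $\pi$. By Proposition \ref{prop: pullback}, the pullback preserves effective divisors and satisfies $\pi^*\div(f)=\div(\pi^*f)$ where $\pi^*f=f\circ\pi\in\Rat(\Gamma)$. We therefore obtain
\begin{equation*}
    \pi^*D-\pi^*\pi_*E^\prime+\div(\pi^*f)\geq 0.
\end{equation*}

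The second step is to compare $\pi^*\pi_*E^\prime$ with $E^\prime$. Writing $E^\prime=\sum_{y\in\Gamma} a_y\cdot[y]$ with $a_y\geq 0$, the coefficient of $[y_0]$ in $\pi^*\pi_*E^\prime$ equals $d_\pi(y_0)\cdot\sum_{y\in\pi^{-1}(\pi(y_0))}a_y$, which is at least $d_\pi(y_0)\cdot a_{y_0}\geq a_{y_0}$ since local degrees are positive integers. Hence $\pi^*\pi_*E^\prime-E^\prime$ is effective, and adding it to the previous display yields
\begin{equation*}
    \pi^*D-E^\prime+\div(\pi^*f)\geq 0.
\end{equation*}
Since $E^\prime$ was an arbitrary effective divisor of degree $k$ on $\Gamma$, this shows $\rk(\pi^*D)\geq k$, as required.

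I do not expect a serious obstacle: the only substantive input beyond the definitions is the comparison $\pi^*\pi_*E^\prime\geq E^\prime$, which is a one-line coefficient check using $d_\pi(y)\geq 1$. For the ``in particular'' statement, if $\Delta$ is a (metric) tree then $g(\Delta)=0$, so tropical Riemann-Roch applied to $[x]$ together with $\deg(K_\Delta-[x])=-3<0$ gives $\rk([x])=1$; alternatively one checks directly that any single point on a metric tree has rank at least $1$. The main inequality then immediately yields $\rk(\pi^*[x])\geq 1$.
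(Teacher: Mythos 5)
Your proposal is correct and follows essentially the same route as the paper: push $E'$ forward, apply the hypothesis $\rk(D)\geq k$ on $\Delta$, pull back via $\pi^*$, and conclude using the effectivity of $\pi^*\pi_*E'-E'$ (which the paper states as ``readily observed'' and you verify by the same coefficient check). Your Riemann--Roch justification of $\rk([x])=1$ on a tree is a fine way to supply the detail the paper leaves implicit.
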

\begin{proof}
    Let $D\in \Div(\Delta)$. We will show that if $\rk(D)\geq k$, then $\rk(\pi^*D)\geq k$. Suppose $\rk(D)\geq k$ and let $E$ be an effective divisor on $\Gamma$ with $\deg(E) = k$. From Proposition \ref{prop: pushforward} it follows that $\pi_*E$ is also effective of degree $k$. Since $\rk(D)\geq k$, there exists $f\in \Rat(\Delta)$ such that $D-\pi_*E + \div(f)$ is effective. It is readily observed that $\pi^*\pi_*E-E$ is effective and therefore 
    \begin{equation*}
        \pi^*D-E +\div(\pi^*f) = \pi^*D-\pi^*\pi_*E+\div(\pi^*f)+\pi^*\pi_*E-E
    \end{equation*}
    is also effective. As $E$ was arbitrary, it follows that $\rk(\pi^*D)\geq k$. If $\Delta$ is a tree and $x\in \Delta$, then $[x]$ is an effective divisor of rank $1$ and therefore $\rk(\pi^* [x])\geq 1$.
\end{proof}
\section{Weierstrass points}
In what follows, $\Gamma$ will denote a metric graph. In this section we recall the definition of Weierstrass points on metric graphs, introduce the notion of geometric Weierstrass points and relate these to the former. Subsequently, we make use of the machinery from \cite{DARB1} and \cite{DARB2} to prove the main theorems from this paper. 
\subsection{Traditional Weierstrass points} Let $x\in \Gamma$ be an arbitrary point. It follows from the Tropical Riemann-Roch theorem that if $k\geq 2g-1$, then $\rk(k\cdot [x]) = k-g$. Furthermore, \eqref{eq: inequality1} and \eqref{eq: inequality2} show that if $n$ a non-negative integer, then 
\begin{equation*}
    \rk(n\cdot [x])\leq \rk((n+1)\cdot[x]) \leq \rk(n\cdot[x])+1.
\end{equation*}
\begin{defi}\label{defi: Wpts}
    A point $x\in \Gamma$ is a \emph{Weierstrass point} if $\rk(g(\Gamma) \cdot [x])\geq 1$.
\end{defi}
\begin{remark}
    If $p\in \Gamma$ is not a Weierstrass point, then $\rk(k\cdot[p]) = k-g$ for $k\geq g$.    
\end{remark}

It is shown in Theorem 4.13 of \cite{BakerSpecialization} that on an arbitrary tropical curve of genus $\geq2$ there exists at least one Weierstrass point. The proof makes use of Lemma 2.8 (Specialization Lemma) therefrom, and the classical fact that Weierstrass points (of algebraic curves) exist on every smooth
curve of genus at least $2$. Separatedly, it is easily shown that there are no Weierstrass points on a metric graph of genus $1$. We now proceed with an example of a Weierstrass and non-Weierstrass points on a genus-$2$ graph.

\begin{example}\label{ex: genus 2}
Consider a metric graph $\Delta$ having a combinatorial model as depicted in Figure \ref{fig: genus 2 metric graph} and where the length of the loop incident to the point $a$ is $4\delta$ for $\delta>0$. In Figure \ref{fig: Wpt of genus 2}, two points of $\Delta$ are depicted, and we claim that: The point $\color{purple}w$ of $\Delta$ is a Weierstrass point, and the point $\color{teal}p$ is a non-Weierstrass point.
\begin{itemize}
    \item To show that $\color{purple}w$ is a Weierstrass points observe that the divisor $2\cdot[w]$ is linearly equivalent to the divisor $[q_1]+[q_2]$ where $q_1$ and $q_2$ are points in this loop with the same distance to $a$. In particular, it is linearly equivalent to $2\cdot[a]$, and therefore, it is linearly equivalent to any effective degree-$2$ divisor supported on the edge given by $a$ and $b$. Since it is linearly equivalent to $2\cdot [b]$, it is also linearly equivalent to the divisor $[q_1^\prime]+[q_2^\prime]$ where $q_1^\prime$ and $q_2^\prime$ are points in the loop incident to $b$ with the same distance to $b$.
    \item To show that $\color{teal}p$ is a non-Weierstrass points, observe that any point in the loop of $b$ has two paths to $b$ and let $v\in\Gamma$ be the unique point where both paths have the same length (put differently, $v$ is the point in the loop maximizing the distance to $b$). We claim that $2\cdot[p]-[v]$ is not linearly equivalent to an effective divisor. Indeed, the divisor $2\cdot [p]-[v]$ is linearly equivalent to $[w]+[p]-[v]$, and this is the unique divisor whose restriction to the loop of $a$ is effective. Therefore, $1$ is the biggest degree at $b$ that we can obtain from a linearly equivalent divisor to $2\cdot [p]-[v]$ whose restriction to the loop of $a$ and the middle edge is effective. But to make $2\cdot[p]-[v]$ effective we must at least obtain a linearly equivalent divisor whose degree at $b$ is $2$. 
\end{itemize}
It is also worth remarking, that the previous arguments also show that any point lying in the edge connecting $a$ and $b$ is a Weierstrass point. In particular, $\Delta$ has infinitely many Weierstrass points.
\begin{figure}[!ht]
    \centering
    \begin{tikzpicture}
        \path (-5,2)--(5,2)--(5,-2)--(-5,-2);
        \draw[line width = 2pt] (-1,0)--(1,0);
        \draw[line width = 2pt] (-2,0) circle (1);
        \draw[line width = 2pt] (2,0) circle (1);
        
        \draw[fill=black] (-1,0) circle (3pt);
        \draw[fill=black] (1,0) circle (3pt);
        \node at (-1.3,0) {$a$};
        \node at (1.3,0) {$b$};
    \end{tikzpicture}
    \caption{The genus $2$ metric graph of Example \ref{ex: genus 2}}
    \label{fig: genus 2 metric graph}
\end{figure}
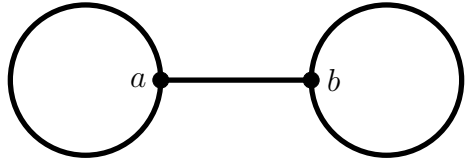

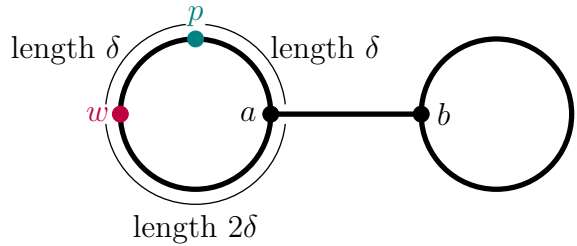
\begin{figure}[!ht]
    \centering
    \begin{tikzpicture}
        \path (-5,2)--(5,2)--(5,-2)--(-5,-2);
        \draw[line width = 0.5pt] (-0.8,0) arc[ start angle = 0, end angle = 90, radius =1.2] node [midway, right] {length $\delta$};
        \draw[line width = 0.5pt] (-2,1.2) arc[ start angle = 90, end angle = 180, radius =1.2] node [midway, left] {length $\delta$};
        \draw[line width = 0.5pt] (-3.2,-0) arc[ start angle = 180, end angle = 360, radius =1.2] node [midway, below] {length $2\delta$};
        \draw[white,fill=white] (-0.8,0) circle (3.5pt);
        \draw[white,fill=white] (-2,1.2) circle (3.5pt);
        \draw[white,fill=white] (-3.2,0) circle (3.5pt);
        \draw[line width = 2pt] (-1,0)--(1,0);
        \draw[line width = 2pt] (-2,0) circle (1);
        \draw[line width = 2pt] (2,0) circle (1);
        
        \draw[fill=black] (-1,0) circle (3pt);
        \draw[fill=black] (1,0) circle (3pt);
        \draw[purple, fill=purple] (-3,0) circle (3pt);
        \node at (-3.3,0) {{\color{purple}$w$}};
        \draw[teal, fill = teal] (-2,1) circle (3pt);
        \node at (-2,1.3) {{\color{teal}$p$}};
        \node at (-1.3,0) {$a$};
        \node at (1.3,0) {$b$};
    \end{tikzpicture}
    \caption{Weierstrass and non-Weierstrass points of the genus $2$ graph from Example \ref{ex: genus 2}}
    \label{fig: Wpt of genus 2}
\end{figure}

\end{example}

We continue with a definition, that will be used at this moment for illustrating the infinitude of Weierstrass points on metric graphs in a family of examples. Separately, this definition will be used in the proof of Theorem \ref{thm: nice computation}.

\begin{defi}
The genus-$g$ discrete graph $O_{g}$ is the object of $\bbG_{g,0}$ given by the following discrete graph:
        \begin{itemize}
        \item It has vertices $V_1,\dots,V_g$ and $W_0,\dots,W_{g-3}$.
        \item There are $g$ loops $l_i$, $1\leq i\leq g$, with $\partial l_i = \{V_i\}$.
        \item There are edges $e_0,\dots, e_{g-4}$ with $\partial e_i = \{W_i,W_{i+1}\}$ for $0\leq i\leq g-4$.
        \item There are edges $h_1,\dots, h_g$, with $\partial h_1=\{W_0,V_1\}$, $\partial h_i =\{V_i,W_{i-2}\}$ for $2\leq i\leq g-1$, and $\partial h_g = \{V_g,W_{g-3}\}$.
    \end{itemize}

In Figure \ref{fig: og} the graph $O_5$ is depicted. In the following example, we will explain how for $g\geq 3$ any metric graph $\Gamma$ having $O_g$ as a model has infinitely many Weierstrass points.  

\begin{figure}[ht!]
        \centering
        \begin{tikzpicture}
            \draw[line width = 2pt] (-6,0) ellipse [x radius = 1 , y radius = 0.5];
            \draw[line width = 2pt] (-5,0)--(-3,0) node [midway, above]{$h_1$};
            \draw[line width = 2pt] (-3,0)--(-1,0) node [midway, above]{$e_0$};
            \draw[line width = 2pt] (-1,0)--(1,0) node [midway, above]{$e_1$};
            \draw[line width = 2pt] (1,0)--(3,0) node [midway, above]{$h_5$};
            \draw[line width =2pt] (-3,0)--(-3,-2) node[midway, left] {$h_2$};
            \draw[line width =2pt] (-1,0)--(-1,-2) node[midway, left] {$h_3$};
            \draw[line width =2pt] (1,0)--(1,-2) node[midway, left] {$h_4$};
            \draw[line width = 2pt] (1,-3) ellipse [y radius = 1 , x radius = 0.5];
            \draw[line width = 2pt] (4,0) ellipse [x radius = 1 , y radius = 0.5];
            \draw[line width = 2pt] (-1,-3) ellipse [y radius = 1 , x radius = 0.5];
            \draw[line width = 2pt] (-3,-3) ellipse [y radius = 1 , x radius = 0.5];
            \draw[fill = black] (-5,0) circle (3pt);
            \draw[fill = black] (-3,0) circle (3pt);
            \draw[fill = black] (-1,0) circle (3pt);
            \draw[fill = black] (-1,-2) circle (3pt);
            \draw[fill = black] (-3,-2) circle (3pt);
            \draw[fill = black] (1,0) circle (3pt);
            \draw[fill = black] (3,0) circle (3pt);
            \draw[fill = black] (1,-2) circle (3pt);
            \node at (-1.5,-2) {$V_2$};
            \node at (-3.5,-2) {$V_3$};
            \node at (0.5,-2) {$V_4$};
            
            \node at (-5,0.5) {$V_1$};
            \node at (-3,0.5) {$W_0$};
            \node at (-1,0.5) {$W_1$};
            \node at (1,0.5) {$W_2$};
            \node at (3,0.5) {$V_5$};
            \node at (-6,0) {$l_1$};
            \node at (-3,-3) {$l_2$};
            \node at (-1,-3) {$l_3$};
            \node at (1,-3) {$l_4$};
            \node at (4,0) {$l_5$};
        \end{tikzpicture}
        \caption{Depiction of $O_{5}$}
        \label{fig: og}
\end{figure}
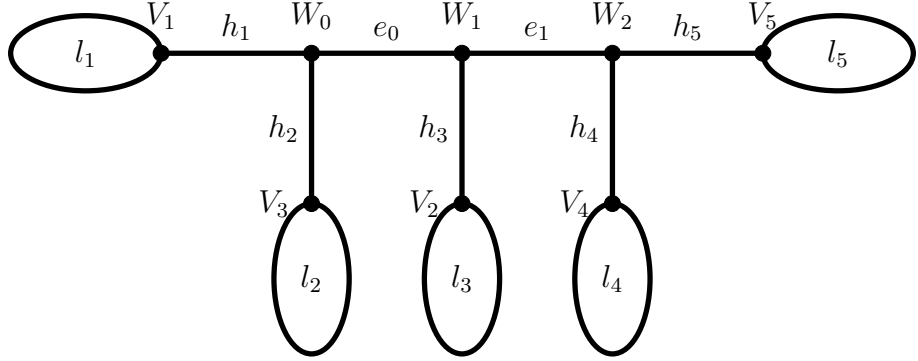
\end{defi}

\begin{example}\label{ex: genus bigger than 3}
    Consider a metric graph $\Gamma$ where the underlying model is $O_g$ for $g\geq 3$. We claim that any point $x\in \Gamma$ is a Weierstrass point. To prove this, we remark the following:
    \begin{itemize}
        \item If $p$ is a point of a genus-$1$ metric graph, then the Tropical Riemann-Roch theorem (or even a direct computation) shows that $\rk(n\cdot [p]) = n-1$.
        \item On a tree, the rank of a divisor coincides with its degree.
    \end{itemize}
    In particular, if $g\geq 3$, then $g\cdot[x]$ is linearly equivalent to $(g-2)\cdot[x]+2\cdot [V_i]$ for any $1\leq i\leq g$. Now, we must show that $g\cdot[x]-[v]$ is linearly equivalent to an effective divisor for arbitrary $[v]\in \Gamma$. Clearly, either $v$ lies on a loop of $\Gamma$, or it lies in the tree defined by the $e_i$'s and the $h_i$'s. In any case, our previous remarks imply that $g\cdot[x]-[v]$ is linearly equivalent to an effective divisor.
\end{example}

\begin{prop}\label{prop: Wpts from tac}
    Suppose $\pi\colon \Gamma\to \Delta$ is a tropical admissible cover where $\deg(\pi) = g(\Gamma)$, and $\Delta$ is a tree. If $y\in \Gamma$ is such that $d_\pi(y)=g(\Gamma)$, then $y$ is a Weierstrass point of $\Gamma$.
\end{prop}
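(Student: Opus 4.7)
The plan is to reduce the claim to the rank inequality for pullbacks along tropical admissible covers (Proposition \ref{prop: rank of pullback}), using the hypothesis on the local degree at $y$ to identify $\pi^*[\pi(y)]$ with $g(\Gamma)\cdot[y]$.

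First I would set $x:=\pi(y)\in \Delta$ and examine the fiber $\pi^{-1}(x)$. Since the tropical admissible cover $\pi$ is surjective with finite fibers and locally affine integral with positive slopes on each outgoing tangent direction, the local degree $d_\pi(z)$ is a positive integer for every $z\in\pi^{-1}(x)$. On the other hand, by the definition of the degree of $\pi$ recalled after the definition of tropical admissible cover, we have
\begin{equation*}
\sum_{z\in\pi^{-1}(x)} d_\pi(z) \;=\; \deg(\pi) \;=\; g(\Gamma),
\end{equation*}
and by hypothesis $d_\pi(y)=g(\Gamma)$. Combined with positivity of local degrees, this forces $\pi^{-1}(x)=\{y\}$.

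Next I would compute the pullback: by the construction of $\pi^*$ we obtain
\begin{equation*}
\pi^*[x] \;=\; \sum_{z\in\pi^{-1}(x)} d_\pi(z)\cdot [z] \;=\; g(\Gamma)\cdot [y].
\end{equation*}
Since $\Delta$ is a tree, Proposition \ref{prop: rank of pullback} yields $\rk(\pi^*[x])\geq 1$, that is, $\rk(g(\Gamma)\cdot[y])\geq 1$. By Definition \ref{defi: Wpts}, this means precisely that $y$ is a Weierstrass point of $\Gamma$.

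The proof is essentially a direct consequence of the tools already assembled: the only substantive observation is that the maximality of $d_\pi(y)$ inside a fiber summing to $\deg(\pi)$ collapses the fiber to a single point, after which the pullback-of-divisors computation and Proposition \ref{prop: rank of pullback} do all the work. I do not expect a significant obstacle; the subtlest point is verifying that local degrees along a tropical admissible cover are strictly positive, which follows from the finite-fiber and surjectivity assumptions together with the local affine integral structure.
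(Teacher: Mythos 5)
Your proof is correct and follows essentially the same route as the paper: identify $\pi^*[\pi(y)]$ with $g(\Gamma)\cdot[y]$ and invoke Proposition \ref{prop: rank of pullback} together with the fact that points on a tree have rank $1$. Your extra step verifying that the fiber over $\pi(y)$ collapses to $\{y\}$ is a detail the paper leaves implicit, and it is handled correctly.
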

\begin{proof}
    Let $x\in\Delta$ be given by $x=\pi(y)$. Then $g(\Gamma)\cdot[y] = \pi^*[x]$ and since $\Delta$ is a tree, we have that $\rk([x])=1$. It follows from Proposition \ref{prop: rank of pullback} that $\rk\left(g(\Gamma)\cdot [y]\right) \geq \rk([x]) =1$, so $[y]$ is a Weierstrass point. 
\end{proof}
\subsection{Geometric Weierstrass points} We now introduce the central notion of this article, and prove one of the main results (Theorem \ref{thm: nice computation}), from which we later deduce the existence of Weierstrass points on metric graphs of genus $g\geq 2$ (Corollary \ref{cor: existence}).

\begin{defi}
A point $x\in \Gamma$ is called a \emph{geometric Weierstrass point} if there exists:
\begin{itemize}
    \item A discrete admissible cover $\pi\colon G\to H$, with $\deg\pi= g(\Gamma),$ $g(G)= g(\Gamma)$, $H$ is an object of $\bbG_{0,3g}$, and $\pi^{-1}(\ell_1(H)) = \ell_1(G)$ (i. e. its ramification profile is $(g(\Gamma))$).
    \item A metric $\delta$ on $H$ such that $\Gamma = \underline{\left|\left(G, F_\pi(\delta) \right)\right|}$, and $x$ is the point given by $\partial\ell_1(G)$.
\end{itemize} 
\end{defi}

\begin{lem}\label{lem: gWpts are Wpts}
    If $x$ is a geometric Weierstrass point of $\Gamma$, then $x$ is a Weierstrass point.
\end{lem}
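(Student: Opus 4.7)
The plan is to realize the situation as a direct application of Proposition \ref{prop: rank of pullback} (equivalently, of Proposition \ref{prop: Wpts from tac}). First I want to convert the given data, the discrete admissible cover $\pi\colon G\to H$ together with the metric $\delta$ on $H$, into an honest tropical admissible cover between metric graphs. The relationship between $F_\pi$ and $I_\pi$ given in the remark after Construction \ref{const: dac to tac}, namely $F_\pi = I_\pi\cdot D_\pi$, lets me rescale on the target: setting $\delta^\prime := D_\pi(\delta)$ on $H$, I get $F_\pi(\delta) = I_\pi(\delta^\prime)$, so $(G,F_\pi(\delta))=(G,I_\pi(\delta^\prime))$. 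Applying Construction \ref{const: dac to tac} to $\pi$ with the target metric $\delta^\prime$ produces a tropical admissible cover $\underline{\lvert\pi\rvert}\colon \Gamma\to \Delta$, where $\Delta := \underline{\lvert (H,\delta^\prime)\rvert}$ is a metric tree because $H$ is an object of $\bbG_{0,3g}$.

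Next I will use the ramification at $\ell_1(H)$ to identify the divisor $g\cdot[x]$ with a pullback. By hypothesis $\mu_1=(g)$, so $\pi^{-1}(\ell_1(H))=\{\ell_1(G)\}$ with weight $g=\deg\pi$. Applying the defining identity of a discrete admissible cover at the vertex $V:=\partial\ell_1(G)$ with the choice $H^\prime=\ell_1(H)$ gives $d_\pi(V)=g$. Since the weighted sum over the fiber $\pi^{-1}(\pi(V))$ equals $\deg\pi = g$ and already $d_\pi(V)=g$, the vertex $V$ is the unique preimage of $\pi(V)$. This identification persists after forgetting the marked legs (the local degree can be computed from the other adjacent flags), so for the point $y:=\pi(x)\in\Delta$ one has $\pi^{-1}(y)=\{x\}$ with local degree $g$. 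Consequently
\begin{equation*}
    \pi^*[y] \;=\; d_\pi(x)\cdot [x] \;=\; g\cdot [x].
\end{equation*}

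To conclude, since $\Delta$ is a tree, Proposition \ref{prop: rank of pullback} gives $\rk(\pi^*[y])\geq \rk([y])\geq 1$, hence $\rk(g\cdot[x])\geq 1$, and by Definition \ref{defi: Wpts} the point $x$ is a Weierstrass point. The only step that requires any care is the persistence of the equality $\pi^{-1}(y)=\{x\}$ and of the local-degree computation $d_\pi(x)=g$ after the forgetting-the-markings operation $\ft_{\{1,\dots,n\}}$; both follow because removing the single flag $\ell_1(G)$ and its image $\ell_1(H)$ does not alter the remaining flags at $V$ or at $\pi(V)$, and the balancing identity can be evaluated on any of the surviving flags of $\pi(V)$.
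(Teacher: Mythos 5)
Your proposal is correct and follows essentially the same route as the paper: rescale the target metric via $\delta^\prime = D_\pi(\delta)$ using $F_\pi = I_\pi\cdot D_\pi$ to invoke Construction \ref{const: dac to tac}, obtain a tropical admissible cover onto a metric tree, and conclude via the pullback rank inequality. The only difference is that you inline the short argument of Proposition \ref{prop: Wpts from tac} (explicitly verifying $d_\pi(V)=g$ and $\pi^*[y]=g\cdot[x]$), whereas the paper simply cites that proposition.
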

\begin{proof}
    Let $x$ be a geometric Weierstrass point of $\Gamma$, let $\pi\colon G\to H$ be the discrete admissible cover from which it arises, and let $\delta$ be the metric on $H$ such that $\Gamma = \underline{\left|\left(G, F_\pi(\delta) \right)\right|}$. To apply Proposition \ref{prop: Wpts from tac} we need to produce a tropical admissible cover $\Gamma\to \Delta$, where $\Delta$ is a tree and such that the local degree at $x$ is $g(\Gamma)$. We recall from Construction \ref{const: dac to tac} that a discrete admissible cover produces a tropical admissible cover where the source graph comes with the induced metric. This means that we have find a metric $\delta^\prime$ on $H$, such that $\Gamma=\underline{\left|(G,I_\pi(\delta^\prime)\right|}$. In this regard, consider the diagonal matrix $D_\pi\colon \bbR^{E(H)}\to \bbR^{E(H)}$ where for $h\in E(H)$ the corresponding diagonal entry is $\mathrm{lcm}(d_\pi(e))_{e\in\pi^{-1}(h)}$. It is readily observed (and has previously been remarked) that $F_\pi= I_\pi\cdot D_\pi$, so that if $\delta^\prime$ is the metric on $H$ such that $D_\pi \cdot \delta= \delta^\prime$, then $\Gamma = \underline{\left| (G, I_\pi (\delta^\prime)) \right|}$. Since these changes on the metric of $H$ do not affect the local degrees, the lemma follows directly from Construction \ref{const: dac to tac} and Proposition \ref{prop: Wpts from tac}.
\end{proof}

\begin{remark}
    The converse does not hold in general. More precisely, it follows from Theorem \ref{thm: nice computation} and Theorem \ref{thm: nice count} that there are only finitely many geometric Weierstrass points on a generic genus-$g$ metric graph. In fact, it is explained in the proof of Theorem \ref{thm: nice computation} that any metric graph whose underlying combinatorial model is $O_g$ has finitely many geometric Weierstrass points (in fact, any genus-$g$ metric graph). However, Examples \ref{ex: genus 2} and \ref{ex: genus bigger than 3} show any metric graph whose underlying combinatorial model is $O_g$ with $g\geq 2$ has infinitely many Weierstrass points.
\end{remark}

Observe that if $\pi\colon G\to H$ is a degree-$g$ discrete admissible cover with $g(H)=0$, $g=g(G)$, and $\#L(H) = 3g$, then the Riemann-Hurwitz equality implies that:
    \begin{itemize}
        \item there is a unique leg of $H$ with ramification profile $(g)$,
        \item all other legs of $H$ have simple ramification $(2,1^{g-2})$.
    \end{itemize}
This previous observation prompts us toward the following notation, in the spirit of \cite{DARB2}:
\begin{nota}\label{nota: settingforgWpts}
    Let $g\geq2$ be given, set $h=0$, $m=3g$, and $\vec{\mu} = (\mu_1,\dots,\mu_m)$, where
    \begin{align*}
        \mu_1 = (g), &&\mu_i = (2,1^{g-2}), \textnormal{ for }1\leq i\leq m.
    \end{align*}
    In this case $n = 1+(3g-1)\cdot (g-1)$, and it is readily verified that:
    \begin{equation}
        n+2(g-1) = g\cdot (3g-2).\label{eq: eq1}
    \end{equation}
\end{nota}

We first require a combinatorial classification to obtain Theorem \ref{thm: nice computation}. To prove the theorem, we proceed as in Theorem 4.3.2 of \cite{DARB2} and show (following our previous notation) that the pushforward of the weight $\varpi_{g,0,\vec{\mu}}^\varnothing$ (Definition 4.1.1 and Theorem 4.1.1 of loc. cit.) through the morphism of poic-fibrations $\mathfrak{src}\colon \est^\varnothing_{g,0,\vec{\mu}}\to \st_g$ (Proposition 3.4.3 of loc. cit.) is a non-trivial multiple of the fundamental weight $[\st_g]$ of $\Mtrop_g$. Following the notation of Theorem 4.1.1 of loc. cit., in our present case of interest, we are not keeping any marked legs, so in the notation therefrom we must take $J=\varnothing$. Furthermore, the degree of the discrete admissible covers is $g$ and coincides with the genus of the source, the genus of the target is zero, and the ramification profiles above the marked legs are as specified in Notation \ref{nota: settingforgWpts}. Before we state, and prove, the theorem, we state a combinatorial classification similar to Lemma 4.2.1 of loc. cit. that is a cornerstone for the proof of Theorem \ref{thm: nice computation}.
\begin{restatable}{lem}{loopandbridge}\label{lem: loopandbridge}
    Following Notation \ref{nota: settingforgWpts}, let $\pi$ be an object of $\bbAC_{g,0}(\vec{\mu})$ of top dimension with $\src(\pi)=O_g$ and such that the composition $\left(\ft_{\{1,\dots,n\},\src(\pi)}\circ F_\pi\right)$ is invertible. If $V\in V(\src(\pi))$ is a vertex giving rise to any one of the $V_i$ vertices ($i=1,\dots,g$), then $\pi(V)$ is incident to a leaf of $\trgt(\pi)$ and $\legval(\pi(V))=1$. Furthermore, there is the following classification: If $e_1$ denotes the leaf of $\trgt(\pi)$ that $\pi(V)$ is incident to, $W$ denotes the other vertex of $\trgt(\pi)$ incident to $e_1$, and $e_2$ denotes the other edge of $\trgt(\pi)$ incident to $\pi(V)$, then one of the following holds (see Figure \ref{fig: classification})
    
    \begin{itemize}
        \item[(A)] The vertex $W$ is incident to the leg with ramification profile $(g)$, the ramification profile of $e_1$ is $(\lambda_1,\lambda_2)$ with $\lambda_1+\lambda_2=g$, and the ramification profile of $e_2$ is $(g)$.
        \item[(B)] The vertex $\pi(V)$ is incident to the leg with ramification profile $(g)$, the ramification profile of $e_1$ is $(1^g)$, the ramification profile above $W$ is $(2,1^{g-2})$, and the ramification profile of $e_2$ is $(g)$.
        \item[(C)] Neither $W$ nor $\pi(V)$ are incident to the leg with ramification profile $(g)$, the ramification profile above $e_1$ is $(1^g)$, and both ramification profiles above $V$ and $e_2$ are $(2,1^{g-2})$. 
    \end{itemize}
    
\end{restatable}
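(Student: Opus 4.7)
The plan is to combine the Riemann-Hurwitz constraint at $V$ with the structural implications of the invertibility of $\ft_{\{1,\dots,n\},\src(\pi)}\circ F_\pi$, and then to split into cases based on which leg of $H := \trgt(\pi)$ carries the ramification profile $(g)$.

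First, top-dimensionality of $\pi$ forces $H$ to be trivalent, so $\val(\pi(V)) = 3$; since all Riemann-Hurwitz numbers vanish identically on a discrete admissible cover, the RH equality reduces to $\val(Q) - 2 = d_\pi(Q)$ at every $Q \in V(G)$, where $G := \src(\pi)$. Next, the invertibility hypothesis forces a bijection between $E(O_g)$ and $E(H)$ arranged so that, for each $\bar e \in E(O_g)$, the chain of edges of $G$ smoothed together to compose $\bar e$ must all map under $\pi$ to a single common edge of $H$; moreover the endpoints of $\bar e$ in $O_g$ correspond to endpoints of the associated edge in $H$. Applied to the loop $l_i$ and the bridge $h_i$ at $V_i$, this yields distinct edges $e_1, e_2 \in E(H)$, both incident to $\pi(V)$, with $e_1$ arising as the image of a cycle-chain $V = V^{(0)}, V^{(1)}, \dots, V^{(k)}, V^{(k+1)} = V$ in $G$; letting $W$ denote the other endpoint of $e_1$, the intermediate vertices alternate between preimages of $\pi(V)$ (even indices) and preimages of $W$ (odd indices). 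Since $V_i$ is incident to only the two edges $l_i$ and $h_i$, a third edge of $H$ incident to $\pi(V)$ would produce a third edge of $O_g$ at $V_i$ via the bijection, a contradiction; hence the remaining half-edge at $\pi(V)$ is a leg and $\legval(\pi(V)) = 1$.

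The heart of the argument is to force $k = 1$. Any intermediate $V^{(j)}$ is smoothed under $\ft_{\{1,\dots,n\}}$, which forces its only non-leg half-edges to be the two chain edges, both mapping to $e_1$. If some such $V^{(j)}$ were a preimage of $\pi(V)$, the local-balance condition $d_\pi(V^{(j)}) = \sum_{H \in \pi^{-1}(H') \cap r^{-1}(V^{(j)})} d_\pi(H)$ evaluated at the half-edge $H'$ of $e_2$ at $\pi(V)$ would require a non-leg half-edge of $V^{(j)}$ mapping to $e_2$, which is absent. Hence $k = 1$, and the unique intermediate vertex $B := V^{(1)}$ lies above $W$. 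Applying the same local-balance argument at $B$ to every half-edge of $W$ distinct from $e_1$ shows that each such half-edge must be a leg receiving the legs of $B$; trivalence of $W$ then gives $\legval(W) = 2$, so $e_1$ is a leaf.

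It remains to distribute the unique leg of $H$ whose ramification profile is $(g)$: it is incident to $W$, to $\pi(V)$, or to neither. In each of these three subcases the degree-$g$ hypothesis and the local-balance equations at $V$ and $B$ together determine the ramification profiles of $e_1$ and $e_2$ uniquely, producing cases (A), (B), and (C) of the statement respectively. The principal obstacle is the chain-length argument: at $V$ itself there may be several additional $e_1$-preimage edges going to preimages of $W$ outside the cycle-chain (these extra edges are eventually removed together with their unstable endpoints when $\ft_{\{1,\dots,n\}}$ performs its successive deletions and smoothings), so one must isolate the loop-forming chain from this surrounding structure before the local-balance analysis at the intermediate $V^{(j)}$'s can be invoked.
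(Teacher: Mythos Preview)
Your proposal rests on an unproved structural claim that is essentially equivalent to what the lemma asserts. You assert that invertibility of $\ft_{\{1,\dots,n\},\src(\pi)}\circ F_\pi$ ``forces a bijection between $E(O_g)$ and $E(H)$ arranged so that, for each $\bar e\in E(O_g)$, the chain of edges of $G$ smoothed together to compose $\bar e$ must all map under $\pi$ to a single common edge of $H$.'' Invertibility of a square matrix does not by itself yield this monomial structure; a priori several columns could share support in a given row and the matrix could still be nonsingular. Establishing that, for instance, the two non-expunged edges at $V$ that add to the loop $l_i$ lie over the \emph{same} edge of $H$ is precisely the content of the paper's case $(\circ\circ)$: one follows each such edge along a path until it reaches an edge over a leaf of $H$, and then argues that if the terminal leaves differ then the corresponding columns of $\ft\circ F_\pi$ are proportional, contradicting invertibility. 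This path-tracing argument, together with Lemma~\ref{lem: second} (every expunged edge has weight~$1$), is the real work, and you have simply asserted its conclusion.

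Your chain-length argument for $k=1$ has a related gap. You claim that an intermediate vertex $V^{(j)}$ has ``only non-leg half-edges [equal to] the two chain edges,'' but this is false in $G$: such a vertex may well be incident to additional edges that are expunged after forgetting the marking (entire subtrees that get deleted during stabilization). In particular, if $V^{(j)}\in\pi^{-1}(\pi(V))$, it could carry expunged edges lying over $e_2$ that satisfy the local balancing condition, so your contradiction does not fire. Ruling this out again requires Lemma~\ref{lem: second} and the finer analysis the paper performs. Finally, even granting your bijection, the inference ``a third edge of $H$ at $\pi(V)$ would produce a third edge of $O_g$ at $V_i$'' is not justified: $\pi(V)$ may have many preimages in $G$, and nothing you have said forces the edge--edge correspondence to be local at $V$. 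The paper instead rules out $\legval(\pi(V))\in\{0,2\}$ directly, using Lemmas~\ref{lem: first} and~\ref{lem: second} and the Riemann--Hurwitz constraint, and only then identifies the leaf.
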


\begin{figure}
        \centering
        \begin{tikzpicture}
            \draw (8.5,6)--(8.5,-6)--(-8.5,-6)--(-8.5,6)--(8.5,6);
            \draw (-8.5,0)--(8.5,0);
            \draw (0,6)--(0,-6);
            
            \draw[line width=2pt] (-5.5,2.5) ellipse [x radius = 1.5, y radius = 1];
            \draw[line width = 2pt] (-4,2.5)--(-1,2.5);
            \draw[dashed, line width = 2pt,red] (-7,2.5) -- (-7.5,3.5) node [at end, above,left] {$g$};
            \draw[dashed, line width = 2pt,blue] (-7,2.5) -- (-7.5,1.5) node [at end, above, left] {$2$};
            \draw[dashed, line width = 2pt,blue] (-7,2.5) -- (-7.5,1);
            \draw[dashed, line width = 2pt,blue] (-7,2.5) -- (-7.5,0.6);
            \draw[dashed, line width = 2pt,blue] (-7,2.5) -- (-7.5,0.2);
            \node at (-5.5,1) {{\color{blue} \parbox{3cm}{$(g-2)$ legs with weight $1$}}};
            \draw[dashed, line width = 2pt,magenta] (-4,2.5) -- (-3.5,2) node [at end, above,right] {$2$};
            \draw[dashed, line width = 2pt,magenta] (-4,2.5) -- (-3.5,1);
            \draw[dashed, line width = 2pt,magenta] (-4,2.5) -- (-3.5,0.6);
            \draw[dashed, line width = 2pt,magenta] (-4,2.5) -- (-3.5,0.2);
            \node at (-2,1) {{\color{magenta} \parbox{3cm}{$(g-2)$ legs with weight $1$}}};
            \node at (-5.5,3.8) {$\lambda_1$};
            \node at (-5.5,1.8) {$\lambda_2$};

            \draw[fill=black] (-4,2.5) circle (3pt);
            \draw[fill=black] (-7,2.5) circle (3pt);
            \node at (-3.8,2.8) {$V$};

            \draw[line width = 2pt] (1+4,2.5) to[out=90, in = 0] (1+2.5,3.8) -- (1+1,3.8);
            \draw[line width = 2pt] (1+4,2.5) to[out=90, in = 0] (1+2.5,4.2) -- (1+1,4.2);
            \draw[line width = 2pt] (1+4,2.5) to[out=90, in = 0] (1+2.5,4.6) -- (1+1,4.6);
            \draw[line width=2pt] (9-5.5,2.5) ellipse [x radius = 1.5, y radius = 1];
            \draw[dashed, line width = 2pt, red] (1+1,3.8)--(1+0.5,4.1);
            \draw[dashed, line width = 2pt, red] (1+1,4.2)--(1+0.5,4.5);
            \draw[dashed, line width = 2pt, red] (1+1,4.6)--(1+0.5,4.9);
            \draw[dashed, line width = 2pt, blue] (1+1,3.8)--(1+0.5,3.6);
            \draw[dashed, line width = 2pt, blue] (1+1,4.2)--(1+0.5,4);
            \draw[dashed, line width = 2pt, blue] (1+1,4.6)--(1+0.5,4.4);
            \draw[fill=black] (2,3.8) circle (3pt);
            \draw[fill=black] (2,4.2) circle (3pt);
            \draw[fill=black] (2,4.6) circle (3pt);

            \node at (6.5,4) {\parbox{3cm}{$(g-2)$ branches}};
            
            \draw[line width = 2pt] (9-4,2.5)--(9-1,2.5);
            \draw[dashed, line width = 2pt,red] (9-7,2.5) -- (9-7.5,3.5) node [at end, above,left] {$2$};
            \draw[dashed, line width = 2pt,blue] (9-7,2.5) -- (9-7.5,1.5) node [at end, above, left] {$2$};
            \draw[dashed, line width = 2pt,magenta] (9-4,2.5) -- (9-3.5,2) node [at end, above,right] {$g$};
            
            \draw[fill=black] (9-4,2.5) circle (3pt);
            \draw[fill=black] (9-7,2.5) circle (3pt);
        
            \node at (5.2,2.8) {$V$};

            \draw[line width=2pt] (-5.5,-4.5) ellipse [x radius = 1.5, y radius = 1];
            \draw[line width = 2pt] (-4,-4.5)--(-1,-4.5) node [midway, above] {$2$};
            \draw[dashed, line width = 2pt,blue] (-7,-4.5) -- (-7.5,-5.5) node [at end, above,left] {$2$};
            \draw[dashed, line width = 2pt,red] (-7,-4.5) -- (-7.5,-3.5) node [at end, above, left] {$2$};
            \draw[dashed, line width = 2pt,magenta] (-4,-4.5) -- (-3.5,-5) node [at end, above,right] {$2$};

            \draw[dashed, line width=2pt,red](-7,-2)--(-7.5,-1.8);
            \draw[dashed, line width=2pt,blue](-7,-2)--(-7.5,-2.2);
            \draw[dashed, line width=2pt,red](-7,-2.5)--(-7.5,-2.3);
            \draw[dashed, line width=2pt,blue](-7,-2.5)--(-7.5,-2.7);
            \draw[dashed, line width=2pt,red](-7,-3)--(-7.5,-2.8);
            \draw[dashed, line width=2pt,blue](-7,-3)--(-7.5,-3.2);

            \draw[dashed, line width =2pt, magenta] (-4,-2)--(-3.5,-2.2);
            \draw[dashed, line width =2pt, magenta] (-4,-2.5)--(-3.5,-2.7);
            \draw[dashed, line width =2pt, magenta] (-4,-3)--(-3.5,-3.2);
            
            \draw[line width = 2pt] (-1,-2)--(-4,-2);
            \draw[line width = 2pt] (-1,-2.5)--(-4,-2.5);
            \draw[line width = 2pt] (-1,-3)--(-4,-3);
            \draw[line width = 2pt] (-7,-2)--(-4,-2);
            \draw[line width = 2pt] (-7,-2.5)--(-4,-2.5);
            \draw[line width = 2pt] (-7,-3)--(-4,-3);
            
            \draw[fill=black] (-4,-4.5) circle (3pt);
            \draw[fill=black] (-4,-2) circle (3pt);
            \draw[fill=black] (-4,-2.5) circle (3pt);
            \draw[fill=black] (-4,-3) circle (3pt);
            \draw[fill=black] (-7,-4.5) circle (3pt);
            \draw[fill=black] (-7,-2) circle (3pt);
            \draw[fill=black] (-7,-2.5) circle (3pt);
            \draw[fill=black] (-7,-3) circle (3pt);

            \node at (-3,-1.5) {$(g-2)$ branches};
            \node at (-3.8,-4.2) {$V$};

            \draw[dashed, line width = 2pt,red] (2,-3.5)--(1.5,-2.5);
            \draw[dashed, line width = 2pt,blue] (2,-3.5)--(1.5,-4.5);
            \draw[dashed, line width = 2pt,magenta] (5,-3.5)--(6,-4.5);
            \draw[line width=2pt] (2,-3.5)--(5,-3.5) node [midway, above] {$e_1$};
            \draw[line width = 2pt] (5,-3.5)--(8,-3.5) node [midway, above] {$e_2$};
            \node at (5,-3) {$\pi(V)$};
            \node at (2,-3) {$W$};

            \draw[fill=black] (5,-3.5) circle (3pt);
            \draw[fill=black] (2,-3.5) circle (3pt);

            \node at (-2.5,4) {$\lambda_1+\lambda_2=g$};
            \node at (-4.5, 5.5) {Case (A)};
            \node at (4.5, 5.5) {Case (B)};
            \node at (-4.5, -0.5) {Case (C)};
            \node at (4.5, -0.5) {$\trgt(\pi)$};
            \draw (-8.5,5)--(8.5,5);
            \draw (-8.5,-1)--(8.5,-1);
        \end{tikzpicture}
        \caption{Cases arising in Lemma \ref{lem: loopandbridge}}
        \label{fig: classification}
    \end{figure}
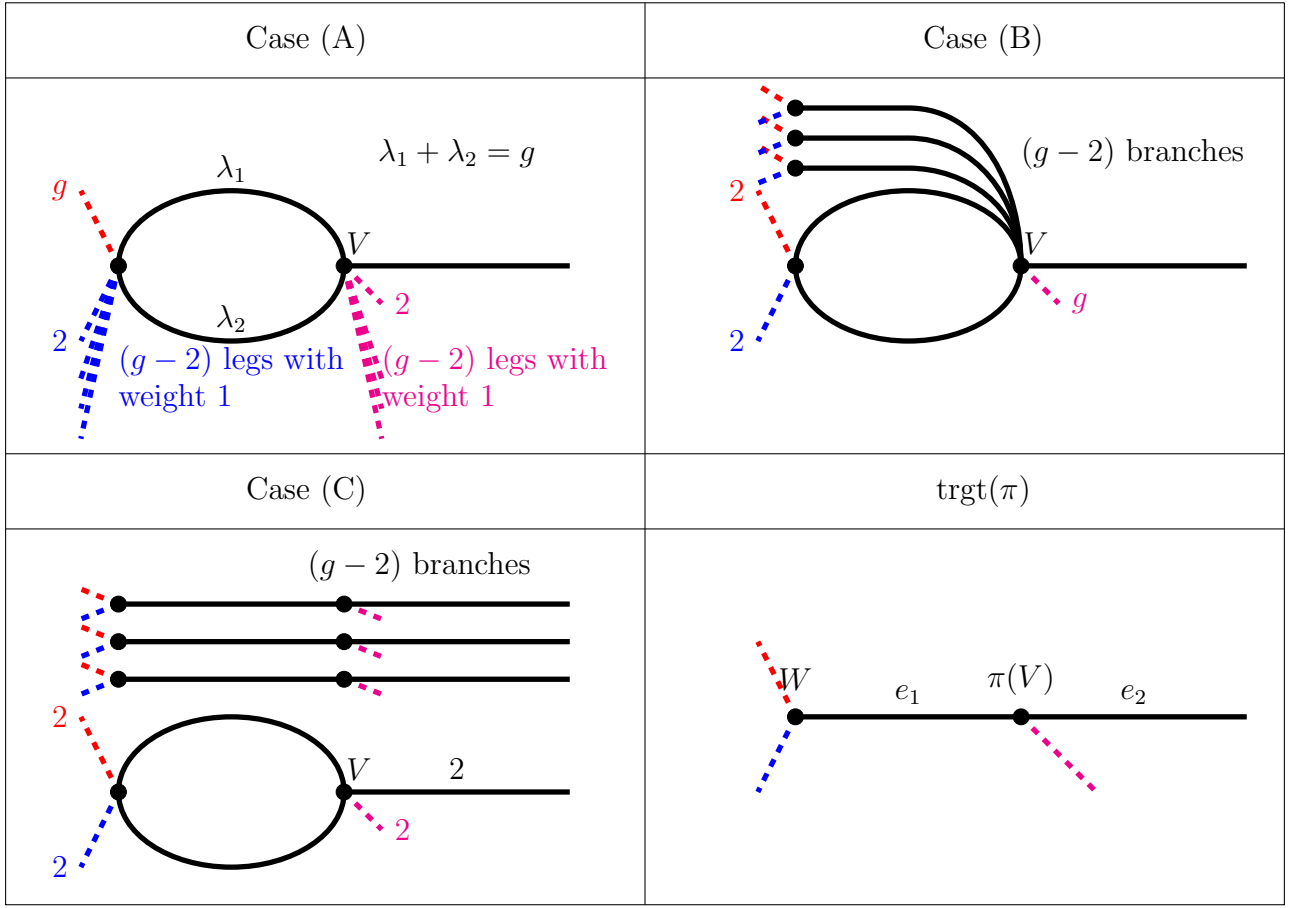

\begin{thm}\label{thm: nice computation}
    Let $g$, $h$, $m$, $\vec{\mu}$, and $n$ be as in Notation \ref{nota: settingforgWpts}. Consider the $\est^\varnothing_{g,0,\vec{\mu}}$-equivariant tropical cycle given by the standard weight $\varpi^\varnothing_{g,0,\vec{\mu}}$ \eqref{eq: standard weight} and the (weakly proper in the top dimension) morphism of linear poic-fibrations $\mathfrak{src}\colon \est^\varnothing_{g,0,\vec{\mu}}\to \st_g$. Then
    \begin{equation*}
        \mathfrak{src}_*\varpi^\varnothing_{g,0,\vec{\mu}} = \left((3g-1)! \cdot ((g-2)!)^{(3g-1)} \cdot (g^3-g)\cdot [\st_{g}]\right). 
    \end{equation*}
\end{thm}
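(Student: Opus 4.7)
The proof strategy mirrors that of Theorem 4.3.2 of \cite{DARB2}. By Theorem \ref{thm: standard weight}, the pushforward $\mathfrak{src}_* \varpi^\varnothing_{g,0,\vec{\mu}}$ is a $(3g-3)$-dimensional $\st_g$-equivariant tropical cycle on $\ST_g$. Since the linear poic-fibration $\st_g$ is irreducible of pure rank $3g-3$ (as recalled in Section 2.3), the group of top-dimensional $\st_g$-equivariant tropical cycles is one-dimensional and generated by $[\st_g]$. The pushforward is therefore automatically a rational multiple of $[\st_g]$, and the task reduces to computing this scalar.

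To evaluate the scalar, it suffices to compute the coefficient of $\mathfrak{src}_* \varpi^\varnothing_{g,0,\vec{\mu}}$ at a single top-dimensional cone of $\st_g$. A natural choice is the cone indexed by $O_g$ together with a choice of spanning tree, which is a maximal-dimensional cone in $\ST_g$ because $O_g$ is trivalent and connected. By the definition of the weak pushforward of equivariant tropical cycles, this coefficient equals the sum
\begin{equation*}
\sum_{\pi} \varpi^\varnothing_{g,0,\vec{\mu}}(\pi),
\end{equation*}
weighted by appropriate lattice indices, taken over all isomorphism classes of covers $\pi\in\bbAC_{g,0}(\vec{\mu})$ with $\src(\pi)=O_g$ for which $(\ft_{\{1,\dots,n\},\src(\pi)}\circ F_\pi)$ is invertible (the condition guaranteeing the cone of metrics has full image on the $O_g$-cone).

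Such covers are controlled by Lemma \ref{lem: loopandbridge}. Since $\vec{\mu}$ contains exactly one partition $(g)$ and $3g-1$ partitions $(2,1^{g-2})$, exactly one of the $g$ loop-vertices $V_1,\dots,V_g$ of $O_g$ falls under Case (A) or Case (B) of the lemma, while the remaining $g-1$ fall under Case (C). The enumeration therefore runs over: the choice of the distinguished loop-vertex, the binary choice between Case (A) and Case (B) at it, the partition $(\lambda_1,\lambda_2)$ with $\lambda_1+\lambda_2=g$ and $\lambda_i\geq1$ in Case (A), and the compatible assignment of ramification profiles to the legs of $\trgt(\pi)$. For every resulting cover the factors appearing in \eqref{eq: standard weight} are explicit: the local rational Hurwitz numbers $H(V)$ are given by connected genus-zero Hurwitz numbers with small ramification and admit closed-form expressions, while the edge-degrees $d_\pi(e)$, the $\mathrm{lcm}$-denominators, and the combinatorial factors $\mathrm{CF}(V)$ are read off from the classification.

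Summing the resulting contributions and simplifying should split the answer into two independent numerical factors. The combinatorial factor $(3g-1)!\cdot((g-2)!)^{3g-1}$ arises from the symmetries among the $3g-1$ target legs carrying ramification profile $(2,1^{g-2})$ and, inside each such profile, the $(g-2)$ simple legs of the source: each permutation produces an isomorphism class of cover with the same geometric source. The remaining factor $g^3-g = (g+1)\cdot g\cdot(g-1)$ emerges by combining the contributions from the three cases of Lemma \ref{lem: loopandbridge}, reproducing the classical Hurwitz-theoretic count of Weierstrass points on a smooth genus-$g$ curve. The hardest step is therefore the clean bookkeeping of the Hurwitz-number contributions across Cases (A), (B), (C), together with the verification that the lattice indices introduced by $\mathfrak{src}_*$ are trivial (or cancel uniformly) — both technical but following the blueprint of Theorem 4.3.2 of \cite{DARB2}.
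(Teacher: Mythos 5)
Your strategy coincides exactly with the paper's: reduce via Theorem \ref{thm: standard weight} and the irreducibility of $\st_g$ to computing a single scalar, evaluate that scalar on the cone of $O_g$, and organize the covers contributing there via Lemma \ref{lem: loopandbridge} together with the actions of $(S_{g-2})^{3g-1}$ and $S_{3g-1}$. The identification of the enumeration data (choice of distinguished loop, Case (A) versus Case (B), the ordered pair $(\lambda_1,\lambda_2)$ with $\lambda_1+\lambda_2=g$) is also correct.

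However, the proposal stops precisely where the theorem's content lies: the multiplicity $g^3-g$ is asserted to ``emerge'' by analogy with the classical count rather than derived. Three concrete steps are missing. First, you never show that once the distinguished loop and the weight-$g$ edge $h_i$ are fixed, the local degrees of \emph{all} other edges of $O_g$ are forced (the paper does this via the two Riemann--Hurwitz constraints $d_\pi(e)+d_\pi(e')=g+1$ across $h_i$ and $d_\pi(e)-d_\pi(e')=\pm1$ across the $h_j$, $j\neq i$, combined with monotonicity along the spine); without this uniqueness the sum over covers is not under control. Second, you do not evaluate the lattice-index/$\mathrm{lcm}$ bookkeeping: the paper observes that $\bigl(\ft_{\{1,\dots,n\},\src(\pi)}\circ F_\pi\bigr)$ is diagonal with entries $1$ in the columns of the $q_k$ and $m_j$, which is what makes the determinant factor disappear from the final answer. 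Third, and most importantly, the actual numerical contributions of the two cases --- $g\cdot\bigl(g\cdot(g-1)\bigr)$ for Case (A), coming from $g$ choices of loop, $g-1$ ordered splittings $(\lambda_1,\lambda_2)$, and the local factor $\lambda_1+\lambda_2=g$, versus $g\cdot(g-1)$ for Case (B), coming from $g$ choices of loop and the residual $(g-1)=(g-1)!/(g-2)!$ from the local Hurwitz number at the degree-$g$ vertex --- are never computed, so the identity $g\cdot g(g-1)+g(g-1)=(g+1)g(g-1)=g^3-g$ is not established. As written, the argument proves only that the pushforward is \emph{some} rational multiple of $[\st_g]$, not that the multiple is the stated one.
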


\begin{proof}
We proceed in a spirit similar to that of the proof of Theorem 4.3.2 of \cite{DARB2}. Namely, from Theorem \ref{thm: standard weight} we know that the weight $\varpi^{\varnothing}_{g,0,\vec{\mu}}$ defines an equivariant $\est^\varnothing_{g,0,\vec{\mu}}$ tropical cycle, and its pushforward $\mathfrak{src}_*\varpi^\varnothing_{g,0,\vec{\mu}}$ is a $3(g-1)$-dimensional $\st_g$-equivariant tropical cycle. Since $\st_g$ is irreducible (Proposition 4.5.8 of \cite{DARB1}), our set-up implies that if this pushforward is non-trivial, then it is immediately a rational multiple of the fundamental cycle. To show its non-triviality, we place ourselves at the very special case of $O_{g}$. In this case, we produce a full combinatorial classification following Lemma \ref{lem: loopandbridge} and use it to compute the multiplicity. In principle, we should consider a $\mathfrak{src}^\pcmplxs$-fine subdivision, but it turns out from this combinatorial classification, just as in Theorem 4.3.2 of loc. cit., that in this very special case of $O_g$ we do not need to subdivide. 

Following our set-up, consider a cover $\pi$ of $\bbAC_{g,0}(\vec{\mu})$ with $\ft_{\{1,\dots,n\}}\src(\pi) = O_g$ and such that the matrix $\left(\ft_{\{1,\dots,n\},\src(\pi)}\circ F_\pi\right)$ is invertible. Notation will be slightly abused, so that vertices and edges of $\src(\pi)$ will be denoted in the same way as those of $O_g$. It follows from Lemma \ref{lem: loopandbridge} that there is an $i$ with $1\leq i\leq g$ such that the edge $h_i$ must have weight $g$, and all the other $h_j$ with $j\neq i$ must have weight $2$ (the condition on the matrix being invertible translates to the cover possibly contributing to the pushforward). 
The vanishing of the RH numbers forces the following: 
\begin{itemize}
    \item[$(\star)$] If $e,e^\prime \in E(O_g)$ are incident to $h_i$, then $g+1=d_\pi(e)+d_\pi(e^\prime)$.
    \item[$(\star\star)$] If $e,e^\prime\in E(O_g)$ are incident to $h_j$ with $j\neq i$, then $d_\pi(e)-d_\pi(e^\prime) = \pm1$.
\end{itemize}
We will show, case by case, that if $d_\pi(h_i)=g$ then there is only one possibility of weight assignment for all other edges.
\begin{itemize}
    \item If $i=1,2$ or $i=g-1,g$. Assume for the sake of simplicity that $i=1$ (the other possibilities in this case are identical). Then $(\star)$ implies that $d_\pi(e_0)= g-1$, and we also have that $d_\pi(e_k)\geq d_\pi(e_{k+1})$ for $0\leq k\leq g-1$. Then $(\star\star)$ implies that $d_\pi(e_k) = g-k+1$, and this is the only possibility.
    \item If $3\leq i\leq g-2$. Then $(\star)$ implies that $g+1=d_\pi(e_{i-3})+d_\pi(e_{i-2})$. Furthermore, we have that 
    \begin{align*}
        &d_\pi(e_0)\leq d_\pi(e_1)\leq \dots \leq d_\pi(e_{i-3}),\\
        &d_\pi(e_{i-2})\geq d_{\pi}(e_{i-1})\geq \dots \geq d_\pi(e_g).
    \end{align*}
    The condition $(\star\star)$ shows that $d_\pi(e_{i-3})\leq i$. However, this same condition together with the decreasing chain of inequalities shows that if $d_\pi(e_{i-3})<i$, then $d_\pi(e_g)<1$. In conclusion, $d_\pi(e_{i-3}) = i$, $d_\pi(e_{i-2}) = g+1-i$, and these equalities, together with $(\star\star)$ and the chains of inequalities, fix the weights of all other edges.
\end{itemize}

We additionally remark that, following Lemma \ref{lem: loopandbridge}, we have a full classification of the ramification profile of the edges $\pi(h_j)$ for $1\leq j\leq g$, and moreover, since the matrix $\left(\ft_{\{1,\dots,n\},\src(\pi)}\circ F_\pi\right)$ is invertible, it necessarily follows that the ramification profile of $\pi(e_j)$ is $(d_\pi(e_j),1^{g-d_\pi(e_j)})$ for $1\leq j\leq g$. In addition, all other vertices of $\src(\pi)$ that do not give rise to vertices of $O_g$ (or lie on a loop) must necessarily have local degree $1$ and, in particular, their local Hurwitz numbers and accompanying combinatorial factors are also $1$. We also observe from condition $(\star\star)$ that the local Hurwitz numbers and accompanying combinatorial factors of the $W_j$ vertices with $0\leq j\leq g-3$ are always $1$.

The previous classification implies that $\trgt(\pi)$ is isomorphic up to permutation of the marking to the $3g$-marked tree $T_g$ that we now describe (see Figure \ref{fig: T5} for a depiction of $T_5$):
\begin{itemize}
            \item It has vertices $B_1$, $\dots$ , $B_g$, $B_1^\prime$, $\dots$, $B_g^\prime$, and $M_0$, $\dots$, $M_{g-3}$.
            \item There are $g$ edges $b_i$, $1\leq i\leq g$, with $\partial b_i = \{B_i,B_i^\prime\}$.
            \item There are edges $m_0,\dots, m_{g-4}$ with $\partial m_i = \{M_i,M_{i+1}\}$ for $0\leq i\leq g-4$.
            \item There are edges $q_1,\dots, q_g$, with $\partial q_1=\{M_0,B_1\}$, $\partial q_i =\{B_i,M_{i-2}\}$ for $2\leq i\leq g-1$, and $\partial q_g = \{B_g,M_{g-3}\}$.
            \item There are legs $L_1$, $\dots$, $L_{3g}$, with $\partial L_{3\cdot i-2}=\partial L_{3\cdot i-1} = \{B_i^\prime\}$ and $L_{3\cdot i} = \{B_i\}$ for $1\leq i\leq g$.
\end{itemize}

\begin{figure}[!ht]
    \centering
     \begin{tikzpicture}
            \draw[line width = 2pt] (-7,0) -- (-5,0) node [midway, above]{$b_1$};
            \draw[line width = 2pt] (-5,0)--(-3,0) node [midway, above]{$q_1$};
            \draw[line width = 2pt] (-3,0)--(-1,0) node [midway, above]{$m_0$};
            \draw[line width = 2pt] (-1,0)--(1,0) node [midway, above]{$m_1$};
            \draw[line width = 2pt] (1,0)--(3,0) node [midway, above]{$q_5$};
            \draw[line width =2pt] (-3,0)--(-3,-2) node[midway, left] {$q_2$};
            \draw[line width =2pt] (-1,0)--(-1,-2) node[midway, left] {$q_3$};
            \draw[line width =2pt] (1,0)--(1,-2) node[midway, left] {$q_4$};
            \draw[line width = 2pt] (1,-2)--(1,-4) node [midway, left] {$b_4$};
            
            \draw[line width = 2pt] (3,0)--(5,0)node [midway, above] {$b_5$};
            \draw[line width = 2pt] (-1,-2)--(-1,-4)node [midway, left] {$b_3$};
            \draw[line width = 2pt] (-3,-2)--(-3,-4)node [midway, left] {$b_2$};
            \draw[dashed, line width = 2pt] (-7,0)--(-8,0.5) node [at end, left] {$L_1$};
            \draw[dashed, line width = 2pt] (-7,0)--(-8,-0.5) node [at end,left] {$L_2$};
            \draw[dashed, line width = 2pt] (-5,0)--(-5,-1) node [at end, below] {$L_3$};
            \draw[dashed, line width = 2pt] (-3,-4)--(-4,-5) node [at end, below] {$L_4$};
            \draw[dashed, line width = 2pt] (-3,-4)--(-3,-5) node [at end, below] {$L_5$};
            \draw[dashed, line width = 2pt] (-3,-2)--(-4,-3) node [at end, below] {$L_6$};
            \draw[dashed, line width = 2pt] (-1,-4)--(-2,-5) node [at end, below] {$L_7$};
            \draw[dashed, line width = 2pt] (-1,-4)--(-1,-5) node [at end, below] {$L_8$};
            \draw[dashed, line width = 2pt] (-1,-2)--(-2,-3) node [at end, below] {$L_9$};
            \draw[dashed, line width = 2pt] (1,-4)--(0,-5) node [at end, below] {$L_{10}$};
            \draw[dashed, line width = 2pt] (1,-4)--(1,-5) node [at end, below] {$L_{11}$};
            \draw[dashed, line width = 2pt] (1,-2)--(0,-3) node [at end, below] {$L_{12}$};
            \draw[dashed, line width = 2pt] (5,0)--(6,0.5) node [at end, right]{$L_{13}$};
            \draw[dashed, line width = 2pt] (5,0)--(6,-0.5) node [at end, right]{$L_{14}$};
            \draw[dashed, line width = 2pt] (3,0)--(3,-1) node [at end, right]{$L_{15}$};
            \draw[fill = black] (-7,0) circle (3pt);
            \draw[fill = black] (-5,0) circle (3pt);
            \draw[fill = black] (-3,0) circle (3pt);
            \draw[fill = black] (-1,0) circle (3pt);
            \draw[fill = black] (-1,-2) circle (3pt);
            \draw[fill = black] (-1,-4) circle (3pt);
            \draw[fill = black] (-3,-2) circle (3pt);
            \draw[fill = black] (-3,-4) circle (3pt);
            \draw[fill = black] (1,0) circle (3pt);
            \draw[fill = black] (3,0) circle (3pt);
            \draw[fill = black] (5,0) circle (3pt);
            \draw[fill = black] (1,-2) circle (3pt);
            \draw[fill = black] (1,-4) circle (3pt);
            \node at (-1.6,-2) {$B_3$};
            \node at (-3.6,-2) {$B_2$};
            \node at (0.4,-2) {$B_4$};
            \node at (-1.6,-4) {$B_2^\prime$};
            \node at (-3.6,-4) {$B_3^\prime$};
            \node at (0.4,-4) {$B_4^\prime$};
            
            \node at (-5,0.5) {$B_1$};
            \node at (-7,0.5) {$B_1^\prime$};
            \node at (-3,0.5) {$M_0$};
            \node at (-1,0.5) {$M_1$};
            \node at (1,0.5) {$M_2$};
            \node at (3,0.5) {$B_5$};
            \node at (5,0.5) {$B_5^\prime$};
        \end{tikzpicture}
    \caption{Depiction of $T_5$}
    \label{fig: T5}
\end{figure}
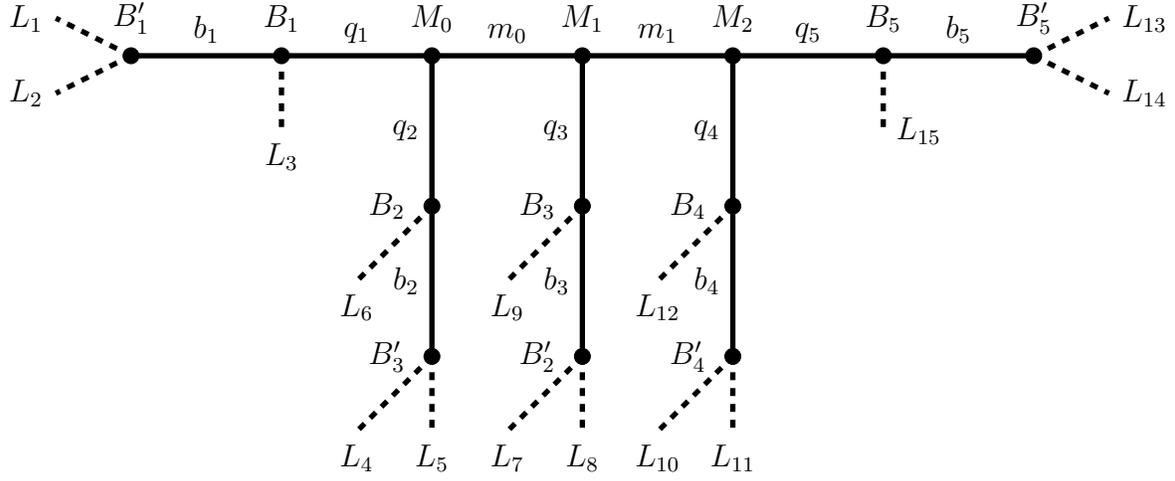

Following our previous classification, we observe that the matrix $\left(\ft_{\{1,\dots,n\},\src(\pi)}\circ F_\pi\right)$ is diagonal, and more interestingly the single entry corresponding to the columns given by the $q_k$ with $1\leq k\leq g$ and the $m_j$ with $0\leq j\leq g-4$ are $1$, because the least common multiple cancels the dividing factor. We have also previously described the behaviour of the local Hurwitz numbers and the accompanying combinatorial factors at all vertices except those lying on the loops. We proceed with through the classification from Lemma \ref{lem: loopandbridge}. 

We observe two cases that we handle independently and depend on the loop of :
    \begin{enumerate}
        \item[(a)] The special loop is as in Case (A) of Lemma \ref{lem: loopandbridge}. We observe the multiplicity $g\cdot 2^{g-1}$ coming from the corresponding loops. Namely, $g-1$ loops behave as in Case (C) of the lemma and contribute each a factor of $2$, and the one loop that behaves as in case (A) contributes the factor of $\lambda_1+\lambda_2=g$. In this case, the remaining Hurwitz numbers are either $1$ or cancel with the corresponding accompanying combinatorial factor.
        \item[(b)] The special loop is as in Case (B) of Lemma \ref{lem: loopandbridge}. In this case we obtain the multiplicity $2^g$ coming from all the possible loops. However, we have a non-trivial contribution from the local Hurwitz numbers and accompanying combinatorial factors. More precisely, if $V$ is the vertex incident to the leg of weight $g$, then $H(V)\cdot \mathrm{CF}(V) = (g-1)!$. The rest of the vertices have local Hurwitz numbers equal to $1$ or these cancel with the corresponding accompanying combinatorial factor.
\end{enumerate}
    To finish we just have to count, correctly, how many different times cases (a) and (b) from above occur and whats their total contribution to the multiplicity. We do this by studying the permutations of the markings. More precisely, permutation of the weight $1$ legs above legs of the target introduces an action of $\left(S_{g-2}\right)^{3g-1}$, and permutation of the markings of the target curve (whose fibers have simple ramification) introduces an action of $S_{3g-1}$.
    \begin{enumerate}
    \item[Case (a):] The remaining accompanying combinatorial factors take into account the action of $\left(S_{g-2}\right)^{3g-1}$ and serve as a weight, so that when we count the corresponding orbits with these factors we obtain a factor of $((g-2)!)^{3g-1}$. Separately, the definition of $T_g$ shows that when we consider the action of $S_{3g-1}$ we will obtain a factor of $\frac{(3g-1)!}{2^{g-1}}$ (the $2^{g-1}$ comes from the $(g-1)$ leaves that have a vertex incident to two legs with simple ramification). There are $g$ loops and hence there are $g$ different possibilities to adjoin the leg of weight $g$. Therefore, this contributes an additional factor of $g$. In addition, permuting the assignment of $\lambda_1$ and $\lambda_2$ to the edges of $\src(\pi)$ yields a different non-isomorphic object of $\bbEAC_{g,0,\vec{\mu}}^{\varnothing}$. Since the number of positive integral ordered tuples adding to $g$ is $(g-1)$, this gives an additional $(g-1)$ factor. In total, all possible instances of case (a) contribute
        \begin{equation*}
            \left((3g-1)!\right)\left(((g-2)!)^{3g-1}\right)\cdot g\cdot (g\cdot(g-1)).
        \end{equation*}

    \item[Case (b):] When we proceed with the analysis of the action of $\left(S_{g-2}\right)^{3g-1}$ we are simply left with:
        \begin{itemize}
            \item a factor of $(g-1)$ stemming out of the local Hurwitz numbers and the accompanying combinatorial factors at the unique vertex with local degree $g$ in the special loop,
            \item a factor of $((g-2)!)^{3g-1}$ coming from the action.
        \end{itemize}
    From the action of $S_{3g-1}$ we obtain a factor of $\frac{(3g-1)!}{2^g}$. As before, there are $g$ loops and hence there are $g$ different possibilities to adjoin the leg of weight $g$, thus yielding an additional factor of $g$. In total, all possible instances of case (b) contribute:
        \begin{equation*} 
            \left((3g-1)!\right)\left(((g-2)!)^{3g-1}\right)\cdot g\cdot (g-1).
        \end{equation*}
    \end{enumerate}
    Now, since 
    \begin{equation*}
        g\cdot(g(g-1))+g\cdot(g-1) = g\cdot(g(g-1)+(g-1)) = (g+1)\cdot g\cdot (g-1),
    \end{equation*}
    it follows that $\mathfrak{src}_*\varpi^{\varnothing}_{g,0,\vec{\mu}} = 
            \left((3g-1)!\right)\left(((g-2)!)^{3g-1}\right)\cdot (g+1)\cdot g\cdot (g-1)$.    
\end{proof}

\begin{remark}
    We observe that in the case of $g=2$, $r=0$, and $J=\varnothing$ Theorem 4.3.2 from \cite{DARB2} is the same as the previous result.
\end{remark}

\begin{cor}\label{cor: existence}
    Every metric graph of genus $g\geq 2$ has a Weierstrass point.
\end{cor}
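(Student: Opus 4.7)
The plan is to combine Theorem~\ref{thm: nice computation} and Lemma~\ref{lem: gWpts are Wpts} to settle the generic case, and then extend to arbitrary genus-$g$ metric graphs by a limiting argument based on the semicontinuity of the Baker-Norine rank.

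First I would treat the generic case. Any metric graph $\Gamma$ of genus $g\geq 2$ corresponding to a point in the relative interior of a top-dimensional cone of $\Mtrop_g$ (i.e.\ whose combinatorial model is trivalent) lies in a top-dimensional cone of $\st_g$, which by Theorem~\ref{thm: nice computation} carries the strictly positive weight $(3g-1)!\cdot((g-2)!)^{3g-1}\cdot(g^3-g)$ under $\mathfrak{src}_*\varpi^\varnothing_{g,0,\vec{\mu}}$. Since this pushforward at a top cone is a sum of strictly positive rational contributions, one per cover in the fiber of $\mathfrak{src}$, the fiber must be non-empty: there exists $\pi\in\bbAC_{g,0}(\vec{\mu})$ together with a metric on $\trgt(\pi)$ whose induced source metric realizes $\Gamma$ after forgetting the markings. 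By definition, the vertex $\partial\ell_1(\src(\pi))$ is a geometric Weierstrass point of $\Gamma$, which by Lemma~\ref{lem: gWpts are Wpts} is a Weierstrass point.

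For an arbitrary metric graph $\Gamma$ of genus $g\geq 2$ (potentially with high-valency vertices or with a non-trivalent model), I would approximate $\Gamma$ by a sequence of generic metric graphs $\Gamma_n$, obtained by resolving each high-valency vertex of a combinatorial model of $\Gamma$ into a tree of trivalent vertices connected by auxiliary edges whose lengths tend to zero, so that $\Gamma_n\to \Gamma$ as metric spaces. By the previous paragraph each $\Gamma_n$ carries a Weierstrass point $x_n\in\Gamma_n$, and compactness permits me to extract a subsequential limit $x\in\Gamma$.

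The main obstacle is to transport the Weierstrass condition to the limit. This would follow from the standard semicontinuity of the Baker-Norine rank under specialization of edge lengths: if $\Gamma_n\to \Gamma$ by contracting edges and $x_n\to x$, then
\[
    \rk_\Gamma(g\cdot [x])\;\geq\; \limsup_n \rk_{\Gamma_n}(g\cdot [x_n])\;\geq\; 1,
\]
so $x$ is a Weierstrass point of $\Gamma$. This is the step where care is needed, but it is a well-known specialization property of tropical linear systems and suffices to conclude the corollary.
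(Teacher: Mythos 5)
Your argument is correct in substance, but it takes a genuinely different route from the paper for the non-generic graphs. The paper's proof is a single step: since $\mathfrak{src}_*\varpi^\varnothing_{g,0,\vec{\mu}}$ is a \emph{non-trivial multiple of the fundamental cycle} $[\st_g]$, it is supported on all of $\Mtrop_g$ (not just on the relative interiors of the top-dimensional cones), so \emph{every} genus-$g$ metric graph already carries a geometric Weierstrass point, and Lemma~\ref{lem: gWpts are Wpts} finishes. You only extract from Theorem~\ref{thm: nice computation} the statement for graphs with trivalent models, and then recover the general case by degenerating auxiliary edge lengths to zero and invoking upper semicontinuity of the Baker--Norine rank. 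That semicontinuity is true and available in the literature (it amounts to the closedness of the locus $\rk\geq 1$ in families of metric graphs, provable by normalizing the witnessing rational functions, using that their slopes are bounded by the degree, and applying an Arzel\`a--Ascoli argument compatible with edge contraction), but it is an external input that the paper never needs and that you only gesture at as ``well-known''; if you go this way you should give a precise reference or a proof, since this is exactly the delicate step of your argument. Note also that your route yields only a Weierstrass point on the non-generic $\Gamma$, whereas the paper's support argument yields a \emph{geometric} Weierstrass point on every metric graph, which is the stronger statement implicitly used later (e.g.\ in the remark that every genus-$g$ metric graph has finitely many geometric Weierstrass points). What your approach buys is independence from the finer support/properness properties of the poic-fibration formalism on lower-dimensional strata, at the cost of importing tropical semicontinuity; what the paper's approach buys is a two-line proof and the stronger conclusion.
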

\begin{proof}
    It follows from Theorem \ref{thm: nice computation} that $\mathfrak{src}_*\varpi^\varnothing_{g,0,\vec{\mu}}$ (Notation \ref{nota: settingforgWpts}) is a non-trivial multiple of the fundamental cycle, hence it is supported on the whole $\Mtrop_{g}$, and we obtain that every genus-$g$ metric graph has at least one geometric Weierstrass point. The corollary now follows from Lemma \ref{lem: gWpts are Wpts}.
\end{proof}

\subsection{Combinatorial Lemmata}
We prove several combinatorial Lemmata on the structure of top-dimensional discrete admissible covers of $\bbAC_{g,0}(\vec{\mu})$ following Notation \ref{nota: settingforgWpts}. Namely, the source of a cover has genus $g\geq 0$, the target of a cover has genus $h=0$, the marking of the target is $m=3g$, and $\vec{\mu}=(\mu_1,\dots,\mu_m)$ with 
\begin{align*}
    \mu_1= (g), &&\mu_i = (2,1^{g-2}), \textnormal{ for }1\leq i\leq m.
\end{align*}
The marking of the target is, therefore, $n=1+(3g-1)\cdot (g-1)$. Assume, furthermore, that $\pi$ is an object of top dimension of $\bbAC_{g,0}(\vec{\mu})$ with $\left(\ft_{\{1,\dots,n\},\src(\pi)}\circ F_\pi\right)$ invertible.

\begin{lem}\label{lem: first}
    If $V$ is a vertex of $\trgt(\pi)$ with $\legval(V)=2$, then the fiber $\pi^{-1}(V)$ is described as one of the following:
    \begin{enumerate}
        \item[(i)] There fiber $\pi^{-1}(V)$ is a single vertex of local degree $g$, and $V$ is incident to $\ell_1(\trgt(\pi))$ (the leg with ramification profile $(g)$). 
        \item[(ii)] There is a unique vertex $\hat{V}$ with $d_\pi(V)= 2$, and all other vertices have local degree $1$. Furthermore, $V$ is incident to legs whose ramification profiles are $(2,1^{g-2})$.
    \end{enumerate}
\end{lem}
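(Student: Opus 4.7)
The plan is to analyze $\pi^{-1}(V)$ through the Riemann--Hurwitz balance together with the ramification profiles of the two legs at $V$. Since $\pi$ is top-dimensional, $\trgt(\pi)$ is trivalent, so $\val(V) = 3$ and $V$ is incident to its two legs $L_1, L_2$ together with a single edge $e$.

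For case (i), if $V$ is incident to $\ell_1(\trgt(\pi))$, take without loss of generality $L_1 = \ell_1(\trgt(\pi))$. The unique leg of $\src(\pi)$ above $L_1$ then has weight $g$, so its incident vertex $\hat V$ satisfies $d_\pi(\hat V) \geq g$ by the balance condition. Since $\sum_{\hat W \in \pi^{-1}(V)} d_\pi(\hat W) = \deg(\pi) = g$, equality forces $\pi^{-1}(V) = \{\hat V\}$ and $d_\pi(\hat V) = g$, which is conclusion (i).

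For case (ii) both $L_1, L_2$ have profile $(2, 1^{g-2})$. For each $\hat W \in \pi^{-1}(V)$ I set $d = d_\pi(\hat W)$ and let $a, b, c$ denote the number of half-edges at $\hat W$ above $L_1, L_2, e$ respectively. Riemann--Hurwitz gives $a + b + c = d + 2$; the balance condition above $L_1$ forces $d \in \{a, a+1\}$, with $d = a + 1$ exactly when $\hat W$ is incident to the weight-$2$ leg above $L_1$ (and similarly for $L_2$); and $c \geq 1$ is needed because the weight-sum above $e$ at $\hat W$ must equal $d \geq 1$. Denoting by $\hat V_1, \hat V_2$ the vertices of $\pi^{-1}(V)$ incident to the weight-$2$ legs above $L_1, L_2$, these constraints leave only three possible fiber structures:
\begin{itemize}
    \item[$(\alpha)$] $\hat V_1 = \hat V_2 =: \hat V$ with $d_\pi(\hat V) = 2$ (so $(a,b,c) = (1,1,2)$ there), and $g-2$ further vertices each of local degree $1$ with $(a,b,c) = (1,1,1)$;
    \item[$(\beta)$] $\hat V_1 = \hat V_2 =: \hat V$ with $d_\pi(\hat V) = 3$ (so $(a,b,c) = (2,2,1)$), and $g-3$ further vertices of local degree $1$;
    \item[$(\gamma)$] $\hat V_1 \neq \hat V_2$ with $d_\pi(\hat V_i) = 2$ (giving $(a,b,c) = (1,2,1)$ at $\hat V_1$ and $(2,1,1)$ at $\hat V_2$), and $g-4$ further vertices of local degree $1$.
\end{itemize}

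Possibility $(\alpha)$ is precisely conclusion (ii), so it remains to exclude $(\beta)$ and $(\gamma)$. Here the invertibility of $\ft_{\{1,\dots,n\},\src(\pi)} \circ F_\pi$ enters: this square matrix of size $3g-3$ being nonsingular forces the stabilization of $\src(\pi)$ after forgetting all marked legs to be a trivalent genus-$g$ graph with exactly $3g-3$ edges, so precisely $n$ edges must be deleted during stabilization. Each fiber vertex with $c = 1$ contributes one leaf-removal and each with $c = 2$ contributes one smoothing (both reducing the edge count by one), so $(\alpha)$ contributes $g-1$ deletions at $V$ while $(\beta)$ and $(\gamma)$ each contribute only $g-2$. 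The main obstacle will be turning this local count into a local obstruction rather than a merely global one, since stabilization can cascade along edges of $\src(\pi)$; I plan to handle this by first computing the (tightly constrained) contributions at vertices of $\trgt(\pi)$ with $\legval \in \{0, 1\}$, where the fiber structure is uniquely determined by the simple ramification, and then showing that the residual budget at the case-(ii) leaves can equal the required total only when every such leaf realizes $(\alpha)$.
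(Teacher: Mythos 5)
Your treatment of case (i) and your local enumeration of the case-(ii) fibers into $(\alpha)$, $(\beta)$, $(\gamma)$ are correct and in fact more explicit than the paper, which settles (i) in one line and delegates (ii) entirely to Lemma 4.3.4 of \cite{DARB2}. The genuine problem is that the exclusion of $(\beta)$ and $(\gamma)$ --- which is the whole content of case (ii) --- is never carried out: you state a plan and yourself flag the unresolved obstacle that stabilization cascades along $\src(\pi)$, so the local count of deleted edges at $V$ cannot be compared to a global budget without substantial further work. As written this is a gap, not a proof. The bookkeeping you set up also contains an error: it is not true that ``precisely $n$ edges must be deleted during stabilization'' --- $n$ is the number of legs of $\src(\pi)$, whereas the number of edges removed is $\#E(\src(\pi))-(3g-3)$, and $\#E(\src(\pi))$ is not determined by $n$.

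The exclusion you want is in fact local and much simpler, and it is exactly the mechanism the paper uses elsewhere (see item $(\bullet)$ in the proof of Lemma \ref{lem: second} and the arguments in Lemma \ref{lem: loopandbridge}): in both $(\beta)$ and $(\gamma)$ every vertex of $\pi^{-1}(V)$ has $c=1$, i.e.\ exactly one incident edge once the marked legs are forgotten. Hence every vertex of $\pi^{-1}(V)$ becomes $1$-valent, every edge of $\pi^{-1}(e)$ is expunged after forgetting the marked legs, and the column of $\left(\ft_{\{1,\dots,n\},\src(\pi)}\circ F_\pi\right)$ indexed by $e$ vanishes --- contradicting invertibility. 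In $(\alpha)$ the distinguished vertex $\hat V$ retains its two edges above $e$ and survives as a $2$-valent vertex, so no such contradiction arises. Replacing your global counting plan by this one-line observation closes the gap.
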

\begin{proof}
    Since $\legval(V) = 2$, it follows that $V$ is either incident to $\ell_1(\trgt(\pi))$ or not. In the former, situation we obtain case (i) directly, and in the latter we obtain case (ii) by the same arguments as Lemma 4.3.4 of \cite{DARB2}.
\end{proof}

We continue by recalling, for the sake of convenience, a useful definition from loc. cit.

\begin{defi}
Let $n\geq0$ and $g\geq 0$ be integers with $2g-2\geq 0$ and let $G$ be an object of $\bbG_{g,n}$. An edge $e\in E(G)$ is said to be \emph{expunged after forgetting the marked legs}, if the column corresponding to $e$ in the matrix $\ft_{\{1,\dots,n\}, G}$ is trivial. A vertex $V\in V(G)$ is said to be \emph{expunged after forgetting the marked legs}, if every edge incident to $V$ is expunged after forgetting the marked legs.
\end{defi}

\begin{lem}\label{lem: second}
    Following our notation and assumptions, any vertex of $\src(\pi)$ that is expunged after forgetting the marked legs has local degree $1$. Similarly, every edge of $\src(\pi)$ expunged after forgetting the marked legs has weight $1$.
\end{lem}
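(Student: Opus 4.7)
The plan is to prove both statements simultaneously by induction on the depth of the iterative stabilisation procedure producing $\ft G$ from the graph $G_0$ obtained from $\src(\pi)$ by deleting all marked legs. Each round of this procedure removes every $1$-valent vertex together with its unique incident edge; a vertex (or edge) of $\src(\pi)$ is expunged after forgetting the marked legs precisely when it is deleted at some round. Observe that $2$-valent vertices are merged rather than deleted and their edges survive with a non-zero column in $\ft_{\{1,\dots,n\},\src(\pi)}$, so such vertices and edges are never expunged.

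First I would establish the base case: let $V$ be a vertex of $\src(\pi)$ that is $1$-valent in $G_0$ and set $W = \pi(V)$. Top-dimensionality of $\pi$ forces $H$ to be trivalent, so $\val(V) = d_\pi(V) + 2$, and the $d_\pi(V) + 1$ half-edges of $V$ other than its sole remaining edge are marked legs lying above legs of $W$. Splitting on $\legval(W) \in \{0,1,2\}$: when $\legval(W) = 0$ there are no legs at $W$ at all, and when $\legval(W) = 1$ the $d_\pi(V) + 1$ positive-weight legs at $V$ above a single leg of $W$ cannot sum to the required value $d_\pi(V)$; only $\legval(W) = 2$ is possible, and Lemma~\ref{lem: first} classifies the fiber. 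The unique vertex of case~(i) and the distinguished $\hat V$ of case~(ii) each carry exactly two edges and are therefore $2$-valent, not $1$-valent, after forgetting the legs. Thus $V$ must be one of the $g - 2$ remaining fiber vertices of case~(ii); this vertex has $d_\pi(V) = 1$ and its unique edge is of weight $1$, as required.

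For the inductive step, suppose both assertions hold at all depths $\le k$, and let $V$ be expunged at depth $k+1$, with remaining edge $e^*$ of weight $w$ in $G_k$. By the inductive hypothesis every edge of $V$ deleted at an earlier depth had weight $1$ and led to an expunged vertex of local degree $1$, so if $V$ has $N$ edges in $G_0$ then $N-1$ of them are weight-$1$ edges and the remaining one is $e^*$. Combining $\val(V) = d_\pi(V) + 2$ with the weight-balance $\sum_{e \in \pi^{-1}(h) \cap r_G^{-1}(V)} d_\pi(e) = d_\pi(V)$ over each half-edge $h$ of $W$, I would again split on $\legval(W)$. For $\legval(W) = 0$, distributing $N-1$ weight-$1$ edges and one weight-$w$ edge over the three edges of $W$ with each column-sum equal to $d_\pi(V)$ is arithmetically consistent only when $d_\pi(V) = w = 1$. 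For $\legval(W) = 2$, Lemma~\ref{lem: first} again pins down the fiber: if $V$ were $\hat V$ of case~(ii), then together with the $g-2$ other fiber vertices (all $1$-valent in $G_0$ and thus already expunged at depth $0$ by the base case) every edge above the leaf edge of $W$ would be expunged, making the corresponding column of $\ft \circ F_\pi$ vanish and contradicting invertibility; case~(i) is ruled out analogously by its two edges of total weight $g$. For $\legval(W) = 1$: when $W$ is incident to $\ell_1$ the fiber is the single vertex of local degree $g$, so $e^*$ is the only preimage of $\pi(e^*)$ from the $W$-side, and invertibility forces $d_\pi(V) = 1$; when $W$ is not incident to $\ell_1$, a parallel enumeration using the $(2,1^{g-2})$-profile combined with invertibility again leaves only the configuration $d_\pi(V) = w = 1$.

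The principal obstacle is the $\legval(W) = 1$ case in the inductive step, in which local weight-balance alone would permit configurations with $d_\pi(V) \ge 2$; it is only the global invertibility of $\ft \circ F_\pi$---forcing every column indexed by an edge of $H$ to be non-zero---that rules such configurations out, because in any such configuration some edge of $H$ ends up having all of its preimages in the expunged subgraph. Once the vertex statement is secured, the edge statement follows immediately: every expunged edge is the unique remaining edge of some $1$-valent vertex at the moment it is expunged, and the inductive argument shows this edge has weight~$1$.
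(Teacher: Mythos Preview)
Your inductive framework on stabilisation depth is a perfectly reasonable way to organise the argument, and it differs from the paper's approach, which instead isolates three structural observations (the cases $\legval(\pi(V))=2$, $\legval(\pi(V))=1$, and a path-connectivity constraint) and then defers the remaining combinatorics to Lemma~4.3.5 of \cite{DARB2}. Your base case and the $\legval(W)\in\{0,2\}$ subcases of the inductive step are essentially correct, though the arithmetic claim in the $\legval(W)=0$ case deserves the extra line $w\leq d_\pi(V)$ to close.

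There is, however, a genuine gap in the $\legval(W)=1$ subcase when $W$ is \emph{not} incident to $\ell_1$. Your stated mechanism---``some edge of $H$ ends up having all of its preimages in the expunged subgraph''---does not fire here. Concretely: if $d_\pi(V)=2$ and $V$ carries the weight-$2$ leg, then $V$ has three edges (two of weight $1$ over one target edge $h_2$ and $e^*$ of weight $2$ over the other target edge $h_1$), and this is arithmetically consistent with the balancing and Riemann--Hurwitz constraints. The remaining fibre vertices $W'$ over $W$ need not be expunged, and each such $W'$ contributes a non-expunged edge over \emph{both} $h_1$ and $h_2$, so neither column of $\ft\circ F_\pi$ vanishes. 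What actually kills this case---and this is the content of the paper's $(\bullet\bullet)$---is that every non-expunged $W'$ carries only weight-$1$ legs, hence by the Riemann--Hurwitz vanishing has exactly two edges of equal weight $d_\pi(W')$, one over each $h_i$; since $W'$ is then $2$-valent in $G_0$, both edges merge into the \emph{same} edge of $\ft_{\{1,\dots,n\}}(\src(\pi))$, and so the columns of $\ft\circ F_\pi$ indexed by $h_1$ and $h_2$ become scalar multiples of one another. The contradiction with invertibility is therefore linear dependence of two columns, not a zero column. Once you replace your mechanism with this proportionality argument, your inductive scheme goes through. (A minor related point: in the $\ell_1$ subcase the fibre vertex has $d_\pi(V)=g$, so the correct conclusion is a contradiction, not $d_\pi(V)=1$.)
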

\begin{proof}
    The proof is similar to that of Lemma 4.3.5 of loc. cit. up to some modifications stemming out of Lemma \ref{lem: first}. We explain the argument up to the corresponding modifications, and afterward refer directly to Lemma 4.3.5 loc. cit., since it proceeds verbatim. Suppose $V$ is a vertex of $\src(\pi)$ that is expunged after forgetting the marked legs. Observe that $\pi(V)$ is necessarily $3$-valent, since $\pi$ is of top dimension. We have the following classification:
    \begin{itemize}
        \item[$(\bullet)$] If $\legval(\pi(V))=2$, then $d_\pi(V)=1$.
        Indeed, from Lemma \ref{lem: first} we know that $d_\pi(V)=1$, $d_\pi(V)=2$, or $d_\pi(V)=g$. If $d_\pi(V)>1$, then the column of the matrix $\left(\ft_{\{1,\dots,n\},\src(\pi)}\circ F_\pi\right)$ corespodning to the unique edge of $\trgt(\pi)$ incident to $\pi(V)$ would be trivial. Since this matrix is invertible, this is not possible.
        \item[$(\bullet\bullet)$] If $\legval(\pi(V))=1$ and $e_1,e_2\in E(\trgt(\pi))$ denote the two edges incident to $\pi(V)$, then every leg of $\src(\pi)$ incident to $V$ has degree $1$.
        We show that it is not possible for $V$ to be incident to a leg with weight bigger than $1$. 
        \begin{itemize}
            \item If $V$ were incident to the leg with weight $g$, then everything in the fiber of $e_1$ and $e_2$ is expunged. But this is not possible, because $\left(\ft_{\{1,\dots,n\}, \src(\pi)}\circ F_\pi\right)$ is invertible.
            \item If $V$ were incident to a leg with weight $2$, then there must be at least one vertex in the fiber $\pi^{-1}(\pi(V))$ that is not expunged after forgetting the marked legs. If $W$ is one of these vertices, then every leg of $\src(\pi)$ that is incident to $W$ must, necessarily, have weight $1$. However, the vanishing of $r_\pi(W)$ implies that there two edges $e_1^W,e_2^W\in E(\src(\pi))$ incident to $W$ with the same weight and mapping correspondingly to $e_i$. Since this $W\in\pi^{-1}(\pi(V))$ was arbitrary, this shows that the columns of $\left(\ft_{\{1,\dots,n\}, \src(\pi)}\circ F_\pi\right)$ corresponding to the $e_i$ are identical. This is impossible, since this matrix is invertible.
        \end{itemize}  
        \item[$\left(\bullet\bullet\bullet\right)$] If $A,B\in V(\src(\pi))$ are two vertices that are not expunged after forgetting the marked legs, then there cannot be a path between them that consists of edges that are expunged after forgetting the marked legs.
        This follows immediately from the behaviour of the forgetting-the-marking morphisms.
    \end{itemize}
    The rest of the argument follows verbatim that of Lemma 4.3.5 of loc. cit. and is therefore omitted.
\end{proof}

Before finally proving Lemma \ref{lem: loopandbridge}, we have to introduce, for clarity and conciseness, one last definition. 

\begin{defi}\label{defi: add}
We follow our previos notation and let $h\in E(\ft_{\{1,\dots,n\}}(\src(\pi))$. We say that an edge $e\in E(\src(\pi))$ \emph{adds} to the edge $h$ if the $(h,e)$-entry of the matrix $\left(\ft_{\{1,\dots,n\},\src(\pi)}\circ F_\pi\right)$ is non-zero.
\end{defi}

We are finally ready to prove Lemma \ref{lem: loopandbridge}, which is conveniently restated.

\loopandbridge*

\begin{proof}
    First, since $\pi$ is of top dimension, every vertex of $\trgt(\pi)$ is $3$-valent. In the same spirit of Lemma \ref{lem: first} and Lemma \ref{lem: second}, we follow the analogous Lemma 4.3.1 of loc. cit. and, in fact, mimic parts of its proof. We show that $\legval(\pi(V))=2$ and $\legval(\pi(V))=0$ are impossible, and then we prove that $\pi(V)$ is necessarily incident to a leaf. To show the former, we argue by contradiction:
    \begin{itemize}
        \item[$(\circ)$]If $\legval(\pi(V))=2$, then $d_\pi(V)\geq 3$. Therefore, following Lemma \ref{lem: first}, it must be of degree $g$ and $\pi(V)$ is incident to a leg with ramification profile $(g)$. Let $\eta$ denote the ramification profile of the edge incident to $V$, and observe that $\ell(\eta)\geq 3$, because $V$ gives rise to one of the $V_i$ and these are $3$-valent. Since the other leg must have simple ramification, it follows from the vanishing of $r_\pi(V)$ that $\ell(\eta)=2$, but this is a contradiction.
        \item[$(\circ\circ)$] Suppose that $\legval(\pi(V))=0$. Let $a,b,c\in E(\trgt(\pi))$ denote the three edges incident to $\pi(V)$. Let $\pi^{-1}(a)|_V$, $\pi^{-1}(b)|_V$, and $\pi^{-1}(c)|_V$ denote the edges in the corresponding fiber that are incident to $V$. We observe the following:
        \begin{itemize}
            \item If every edge of $\pi^{-1}(a)|_V$ and every edge of $\pi^{-1}(b)|_V$ is expunged after forgetting the marked legs, then $\#\pi^{-1}(c)|_V\geq 3$. But this is impossible due to the vanishing of $r_\pi(V)$ and Lemma \ref{lem: second}.
            \item If every edge of $\pi^{-1}(a)|_V$ is expunged after forgetting the marked legs, then
            \begin{equation*}
                \#\left(\pi^{-1}(b)|_V\cup\pi^{-1}(c)|_V\right)\geq 3.
            \end{equation*}
            The same arguments as above apply and show that this is not possible. 
        \end{itemize}
        From the previous two items, it follows that there are edges $e_a\in\pi^{-1}(a)|_V$, $e_b\in\pi^{-1}(b)|_V$, and $e_c\in\pi^{-1}(c)|_V$, that are not expunged after forgetting the marked legs. Necessarily, two of these edges must add to the (corresponding) loop and the other must add to the edge. Without loss of generality, we assume that $e_a$ and $e_b$ add to the loop while $e_c$ adds to the edge. We now arrive at a contradiction, because each of the edges $e_a$ and $e_b$ traces a path in $\src(\pi)$ that eventually leads to edges $h_a$ and $h_b$ (correspondingly) lying over different leaves of $\trgt(\pi)$ and the columns of the invertible matrix $\left(\ft_{\{1,\dots,n\}, \src(\pi)}\circ F_\pi \right)$ corresponding to the leaves $\pi(h_a)$ and $\pi(h_b)$ would be scalar multiples of each other. In conclusion, $\legval(\pi(V))=0$ is not possible.
    \end{itemize}
    The previous arguments show that $\legval(\pi(V))=1$ is the only possibility. Let $e_1$ and $e_2$ denote the edges of $\trgt(\pi)$ that are incident to $\pi(V)$. Just as before, let $\pi^{-1}(e_1)|_V$ and $\pi^{-1}(e_2)|_V$ denote the set of edges in the corresponding fiber that are incident to $V$. Since $V$ gives rise to a $3$-valent vertex, it follows that there are exactly three edges in $\left(\pi^{-1}(e_1)|_V\cup \pi^{-1}(e_2)|_V\right)$ that are not expunged after forgetting the marked legs. Let us denote these by $a$, $b$, and $c$. The vanishing of $r_h(V)$ implies that these do not lie above the same edge, and, furthermore, two of these must add to the loop and the other to the edge. Without loss of generality, we assume that $a$ and $b$ are the ones that add to the loop, and the same argument as in $(\circ\circ)$ shows that they must lie above the same edge. Hence, we can additionally assume that $a,b\in \pi^{-1}(e_1)|_V$ and $c\in\pi^{-1}(e_2)|_V$. In a similar fashion, the edges traces a path in $\src(\pi)$ that eventually leads to edges $h_a$ and $h_b$ respectively, such that: $h_a$ and $h_b$ add to the loop, and $\pi(h_a)$ and $\pi(h_b)$ are leaves of $\trgt(\pi)$. The same argument, again, as in $(\circ\circ)$ shows that $\pi(h_a)=\pi(h_b)$. Furthermore, since $\left(\ft_{\{1,\dots,n\},\src(\pi)}\circ F_\pi\right)$ is invertible, necessarily $h_a=a$ and $h_b=b$. In particular, $e_1$ has to be a leaf of $\trgt(\pi)$. 
    The classification follows by exhaustion. Namely, cases (A) and (B) arise from assuming that the leg $\ell_1$ of $\trgt(\pi)$ (namely, that with ramification profile $(g)$) is incident to the leaf incident to $\pi(V)$. In this case, the vanishing of $r_h(V)$ makes only these two possiblities feasible. The case (C) is the same as that of Lemma 4.3.1 of loc. cit., and can (separately) also be exhaustively obtained from Lemma \ref{lem: first}, Lemma \ref{lem: second}, and the vanishing of the RH numbers.
\end{proof}
\subsection{Enumeration of geometric Weierstrass points} In this subsection we keep Notation \ref{nota: settingforgWpts} and follow analogous lines to Section $4$ of \cite{DARB2}. That is, we make use of the actions of $(S_{g-2})^{3g-1}$ and the symmetric group $S_{3g-1}$, as in the proof of Theorem \ref{thm: nice computation}, to keep the $(g^3-g)$ factor in Theorem \ref{thm: nice computation}. The actions of these groups introduce an equivalence relation on isomorphism classes of discrete admissible covers of $\bbAC^\varnothing_{g,0}(\vec{\mu})$ (and also on the set of isomorphism classes of the underlying category of $\EST^\varnothing_{g,0,\vec{\mu}})$, and we introduce a multiplicity that is well defined on these. We then show that counting the corresponding equivalence classes with the given multiplicity give the sought enumerative statement (Theorem \ref{thm: nice count}). 

\begin{defi}
    If $\pi$ is a discrete admissible cover of $\bbAC_{g,0}(\vec{\mu})$, then its \emph{multiplicity} is the number
\begin{equation}
    \mt(\pi):=\frac{\varpi^\varnothing_{g,0,\vec{\mu}}\left(\pi\right) \cdot \left|\det(\ft_{\{1,\dots,n\},\src(\pi)}\circ F_\pi)\right|}{\HS(\pi)\cdot \VS(\pi)},\label{eq: multiplicity}
\end{equation}
    where $\VS(\pi)$ and $\HS(\pi)$ are the sizes of the stabilizers of the action of $(S_{g-2})^{3g-1}$ and $S_{3g-1}$ correspondingly.  
\end{defi}

It is a non-trivial fact that this multiplicity is, actually, a non-negative integer. We show this in later in Lemma \ref{lem: mult is int}, and now introduce the previously mentioned equivalence relation to finally state the enumeration result.

\begin{defi}
    The \emph{set of marked discrete admissible covers (of degree-$g$ with ramification $\vec{\mu}$ of trees)} is the set of equivalence classes of $[\bbAC_{g,0}(\vec{\mu})]$ under the following relation: $[\pi]\sim[\pi^\prime]$ if there exist $\vec{\alpha}\in\left(S_{g-2}\right)^{3g-1}$ and $\beta\in S_{3g-1}$ such that $[\pi] = \vec{\alpha}\left(\beta\left([\pi^\prime]\right)\right)$ and $\ft_{\{1,\dots,n\}}\left(\src\left([\pi]\right)\right) =\ft_{\{1,\dots,n\}}\left(\src\left([\pi^\prime]\right)\right)$. Naturally, the multiplicity \eqref{eq: multiplicity} gives a well-defined function on the set of marked discrete admissible covers.
\end{defi}

The relevance of this equivalence relation follows from the observation that a geometric Weierstrass point arises from two discrete admissible covers if and only if these are in the same equivalence class of marked discrete admissible covers.

\begin{thm}\label{thm: nice count}
    A generic genus-$g$ metric graph has $(g^3-g)$-many geometric Weierstrass points counted with multiplicity.
\end{thm}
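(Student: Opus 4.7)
The plan is to deduce Theorem \ref{thm: nice count} from Theorem \ref{thm: nice computation} by unpacking the pushforward formula for $\mathfrak{src}_*\varpi^\varnothing_{g,0,\vec{\mu}}$ at a generic top-dimensional cone of $\st_g$ and then collecting terms under the action of the labeling group $G := (S_{g-2})^{3g-1}\times S_{3g-1}$.

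First I would evaluate both sides of the identity from Theorem \ref{thm: nice computation} on a generic top-dimensional cone of $\st_g$, corresponding to a generic metric graph $\Gamma$. On a sufficiently fine $\mathfrak{src}^\pcmplxs$-compatible subdivision, the coefficient of the pushforward at such a cone is a weighted sum over the top-dimensional cones of $\EST^\varnothing_{g,0,\vec{\mu}}$ mapping onto it, the weight being $\varpi^\varnothing_{g,0,\vec{\mu}}(\pi)$ times the lattice index of the corresponding linear map. For a top-dimensional $\pi\in[\bbAC_{g,0}(\vec{\mu})]$ that contributes to a generic cone, this lattice index is exactly $|\det(\ft_{\{1,\dots,n\},\src(\pi)}\circ F_\pi)|$, and only $\pi$ with this matrix invertible contribute. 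Theorem \ref{thm: nice computation} thus yields
\begin{equation*}
\sum_\pi \varpi^\varnothing_{g,0,\vec{\mu}}(\pi)\cdot \left|\det(\ft_{\{1,\dots,n\},\src(\pi)}\circ F_\pi)\right| \;=\; (3g-1)!\cdot ((g-2)!)^{3g-1}\cdot (g^3-g),
\end{equation*}
where $\pi$ ranges over the top-dimensional covers of $\bbAC_{g,0}(\vec{\mu})$ whose image under $\mathfrak{src}$ lies over $\Gamma$.

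Next, the group $G$ acts on this indexing set: the first factor permutes the weight-$1$ source legs above each simply-ramified leg of the target, and the second permutes the simply-ramified legs of the target among themselves. Both $\varpi^\varnothing_{g,0,\vec{\mu}}$ and $|\det(\ft_{\{1,\dots,n\},\src(\pi)}\circ F_\pi)|$ are $G$-invariant, since the action merely relabels combinatorially identical data. By the orbit-stabilizer theorem, the orbit of a cover $\pi$ has cardinality $|G|/(\HS(\pi)\cdot\VS(\pi))$; grouping the previous sum by orbits gives
\begin{equation*}
|G|\cdot \sum_{\text{orbits}}\mt(\pi) \;=\; (3g-1)!\cdot ((g-2)!)^{3g-1}\cdot (g^3-g) \;=\; |G|\cdot (g^3-g),
\end{equation*}
so that $\sum_{\text{orbits}}\mt(\pi) = g^3-g$.

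To conclude, I would identify the set of orbits (marked discrete admissible covers with source a model of $\Gamma$) with the set of geometric Weierstrass points of $\Gamma$. By construction, each geometric Weierstrass point of $\Gamma$ is the image of $\partial\ell_1(\src(\pi))$ after forgetting the markings and applying realization, and two covers produce the same distinguished point precisely when they differ by the $G$-action, which is exactly the equivalence relation defining marked discrete admissible covers. Transporting $\mt$ along this bijection yields $g^3-g$ geometric Weierstrass points counted with multiplicity. The main obstacle is Step 1: cleanly identifying the lattice index in the pushforward formula with $|\det(\ft_{\{1,\dots,n\},\src(\pi)}\circ F_\pi)|$ on a $\mathfrak{src}^\pcmplxs$-fine subdivision via the constructions of Section 3.4 of \cite{DARB2}; the orbit decomposition and the bijection with geometric Weierstrass points then follow essentially by bookkeeping.
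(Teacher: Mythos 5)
Your proposal is correct and follows essentially the same route as the paper: the paper likewise evaluates the generic fiber of the realization of $\mathfrak{src}$ (via Corollary 2.36 of \cite{GathmannKerberMarkwigTFMSTC}) to get the total $(3g-1)!\cdot((g-2)!)^{3g-1}\cdot(g^3-g)$ with per-point multiplicity $\varpi^\varnothing_{g,0,\vec{\mu}}(\pi)\cdot\left|\det(\ft_{\{1,\dots,n\},\src(\pi)}\circ F_\pi)\right|$, and then quotients by the actions of $(S_{g-2})^{3g-1}$ and $S_{3g-1}$ exactly as in your orbit--stabilizer step. The only cosmetic difference is that you make the orbit-stabilizer bookkeeping explicit where the paper phrases it as the group actions ``removing'' the factorial factors.
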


\begin{proof}
    As has been previously observed, two discrete admissible covers give rise to the same geometric Weierstrass point if and only if they are in the same equivalence class of marked discrete admissible covers. In this regard, we will show that the number of different marked discrete admissible covers is $(g^3-g)$ when counted with their multiplicity \eqref{eq: multiplicity}. We apply Corollary 2.36 of \cite{GathmannKerberMarkwigTFMSTC} (the argument follows verbatim in this set-up) to the realization of $\mathfrak{src}\colon\EST^\varnothing_{g,0}(\vec{\mu})\to\ST_{g,\varnothing}$ following Theorem \ref{thm: nice count}. The fiber of a generic point $p\in\left|\ST_{g,\varnothing}\right|$ has therefore 
    \begin{equation*}
        \left((3g-1)!\cdot ((g-2)!)^{(3g-1)}\cdot (g^3-g)\right)    
    \end{equation*}
    many points counted with multiplicity coming from the weight $\varpi^\varnothing_{g,0,\vec{\mu}}$. By definition, if $q\in  |\mathfrak{src}^\pcmplxs|^{-1}(p)$ and its image in $|\AC_{g,0,\vec{\mu}}|$ lies in the cone of a cover $\pi$ of $\bbAC_{g,0}(\vec{\mu})$, then its multiplicity is
    \begin{equation}
        \varpi^\varnothing_{g,0,\vec{\mu}}(\pi)\cdot \left|\det\left(\ft_{\{1,\dots,n\},\src(\pi)}\circ F_\pi\right)\right|.\label{eq: multiplicity of fibre}
    \end{equation}
    The action of $(S_{g-2})^{3g-1}$ can naturally be extended to $\EST^\varnothing_{g,0}(\vec{\mu})$ and its realization, and it also restricts to an action on the fiber $|\mathfrak{src}^\pcmplxs|^{-1}(p)$. This action preserving the multiplicities of the weight $\varpi^{\varnothing}_{g,0,\vec{\mu}}$, and moves between points in the same class of marked discrete admissible covers. By definition, the size of the stabilizer is $\VS(\pi)$. Similarly, the action of $S_{3g-1}$ gives an action on the fiber $|\mathfrak{src}^\pcmplxs|^{-1}(p)$ that preserves the multiplicities of the weight $\varpi^{\varnothing}_{g,0,\vec{\mu}}$ and moves between points in the same class of marked discrete admissible covers. As before, the size of the stabilizers coincide with $\HS(\pi)$. Since the equivalence relation of marked discrete admissible covers is given by these two actions, it follows that counting each orbit with the multiplicity \eqref{eq: multiplicity} removes both the factor of $((g-2)!)^{(3g-1)}$ coming from the action of $(S_{g-2})^{(3g-1)}$ and the factor of $(3g-1)!$ coming from the action of $S_{3g-1}$, and there are $(g^3-g)$ marked discrete admissible covers with multiplicity.
\end{proof}

We follow with a reprisal of Examples \ref{ex: genus bigger than 3} and \ref{ex: genus 2} but illustrate the corresponding Weirstrass points, and then finish with Lemma \ref{lem: mult is int} and its proof.

\begin{example}\label{ex: genus 3 geometric}
    Consider a metric graph $\Gamma$ where the underlying model is $T_g$. For each $1\leq i\leq g$, let $p_i$ denote the point of the loop $l_i\subset \Gamma$ that maximizes the distance to $V_i$. Then, following the proof of Theorem \ref{thm: nice computation}, one can readily observe that the points $V_i$ and $p_i$ are geometric Weierstrass points. The multiplicity of each $p_i$ is $g\cdot (g-1)$, wherease the multiplicity of each $V_i$ is $(g-1)$. In Figure \ref{fig: genus 3 metric graph gWpts} we have illustrated the genus-$3$ case: the {\color{purple} purple points are the $p_i$ and have multiplicity $6$}, and the {\color{teal} teal points are the $V_i$ and have multiplicity $2$}.
    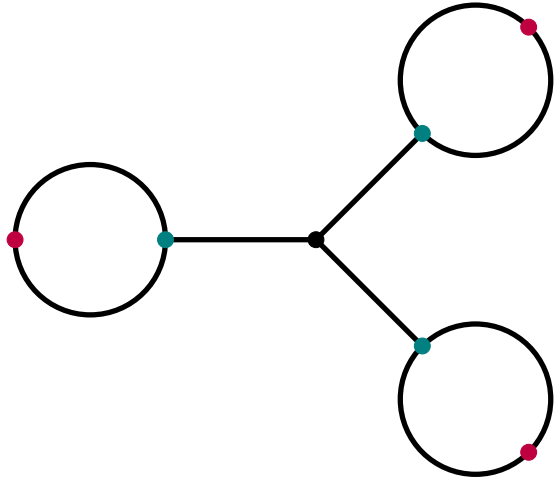
\begin{figure}[!ht]
        \centering
        \begin{tikzpicture}

            \draw[black, line width= 2pt] (-3,0) circle (1);
            \draw[black, line width= 2pt] (2.1213,2.1213) circle (1);
            \draw[black, line width= 2pt] (2.1213,-2.1213) circle (1);
            \draw[line width= 2pt] (-2,0)--(0,0);
            \draw[line width= 2pt] (0,0)--(1.414,1.414);
            \draw[line width= 2pt] (0,0)--(1.414,-1.414);

            \draw[purple,fill=purple] (2.828,2.828) circle (3pt);
            \draw[purple,fill=purple] (2.828,-2.828) circle (3pt);
            \draw[purple,fill=purple] (-4,0) circle (3pt);
            
            \draw[teal,fill=teal] (1.414,1.414) circle (3pt);
            \draw[teal,fill=teal] (1.414,-1.414) circle (3pt);
            \draw[teal,fill=teal] (-2,0) circle (3pt);
            \draw[fill=black] (0,0) circle (3pt);
        \end{tikzpicture}
        \caption{Depiction of geometric Weierstrass points of Example \ref{ex: genus 3 geometric} in genus $3$}
        \label{fig: genus 3 metric graph gWpts}
    \end{figure}

In Figure \ref{fig: genus 2 metric graph gWpts}, the geometric Weierstrass points are depicted: the {\color{purple} purple points are the $p_i$ and have multiplicity $2$}, and the {\color{teal} teal points are the $V_i$ and have multiplicity $1$}.

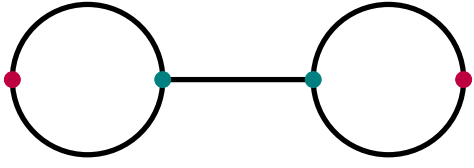
\begin{figure}[!ht]
    \centering
    \begin{tikzpicture}
        \path (-5,2)--(5,2)--(5,-2)--(-5,-2);
        \draw[white,fill=white] (0.8,0) circle (3.5pt);
        \draw[white,fill=white] (3.2,0) circle (3.5pt);
        \draw[white,fill=white] (-0.8,0) circle (3.5pt);
        \draw[white,fill=white] (-3.2,0) circle (3.5pt);
        \draw[line width = 2pt] (-1,0)--(1,0);
        \draw[line width = 2pt] (-2,0) circle (1);
        \draw[line width = 2pt] (2,0) circle (1);
        
        \draw[teal, fill=teal] (-1,0) circle (3pt);
        \draw[teal, fill=teal] (1,0) circle (3pt);
        \draw[purple, fill=purple] (-3,0) circle (3pt);
        \draw[purple, fill=purple] (3,0) circle (3pt);
    \end{tikzpicture}
    \caption{Depiction of geometric Weierstrass points of Example \ref{ex: genus 3 geometric} in genus $2$}
    \label{fig: genus 2 metric graph gWpts}
\end{figure}
\end{example}

\begin{lem}\label{lem: mult is int}
    If $\pi$ is an object of $\bbAC_{g,0}(\vec{\mu})$, then its multiplicity \eqref{eq: multiplicity} is a non-negative integer.
\end{lem}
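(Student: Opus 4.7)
The statement has two parts: non-negativity and integrality. Non-negativity of $\mt(\pi)$ is essentially immediate: the standard weight $\varpi^\varnothing_{g,0,\vec{\mu}}(\pi)$ is a ratio of non-negative quantities (local Hurwitz numbers $H_{0\to 0}(\lambda_1,\dots,\lambda_k)$ are non-negative rationals, the combinatorial factors $\mathrm{CF}(V)$ are positive integers, and both edge weights and least common multiples are positive integers), the absolute value $|\det(\ft_{\{1,\dots,n\},\src(\pi)}\circ F_\pi)|$ is non-negative, and $\HS(\pi)$, $\VS(\pi)$ are positive integers as sizes of stabilizers. So all the work lies in showing that $\HS(\pi)\cdot\VS(\pi)$ divides $\varpi^\varnothing_{g,0,\vec{\mu}}(\pi)\cdot|\det(\ft_{\{1,\dots,n\},\src(\pi)}\circ F_\pi)|$ after clearing the internal denominators of $\varpi^\varnothing$.

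The first step is to analyze $\VS(\pi)$. The action of $(S_{g-2})^{3g-1}$ permutes, independently for each $i\in\{2,\dots,3g\}$, the $g-2$ weight-$1$ legs of $\src(\pi)$ lying over $\ell_i(\trgt(\pi))$. For each such $i$ and each vertex $V\in\pi^{-1}(\partial\ell_i)$, let $a_{V,i}$ denote the number of weight-$1$ legs of $\src(\pi)$ that are incident to $V$ and lie over $\ell_i$. Then the $i$-th factor of the stabilizer decomposes as $\prod_{V}S_{a_{V,i}}$, so
\begin{equation*}
\VS(\pi)=\prod_{i=2}^{3g}\prod_{V\in\pi^{-1}(\partial\ell_i)}a_{V,i}!.
\end{equation*}
By the very definition of $\mathrm{CF}(V)$, each $a_{V,i}!$ appears as a factor in $\mathrm{CF}(V)$ (the $a_{V,i}$ weight-$1$ legs at $V$ are adjacent edges/legs of the same weight mapping to the same target leg $\ell_i$). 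Consequently, $\VS(\pi)$ divides $\prod_V\mathrm{CF}(V)$, and hence divides the relevant part of the numerator of $\varpi^\varnothing_{g,0,\vec{\mu}}(\pi)$.

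The second step is to analyze $\HS(\pi)$, which is the subgroup of $S_{3g-1}$ (permuting labels of $\ell_2,\dots,\ell_{3g}$) that extends to an isomorphism of $\pi$. Using the combinatorial classification from Lemma \ref{lem: loopandbridge}, the permissible permutations decompose into symmetric group actions on groups of target legs in combinatorially equivalent positions, so $\HS(\pi)$ factors as a product of factorials. Each such factorial must be matched against a compensating factor in $\varpi^\varnothing(\pi)\cdot|\det|$: transpositions that swap the two simple-ramification legs sitting at a single leaf of $\trgt(\pi)$ contribute a factor of $2$ coming from the $(\lambda_1+\lambda_2=g)$-split in Case (A), from the $(g-1)$-factor arising from the Hurwitz number $H(V)=(g-1)!$ in Case (B), or from the $2^{g-1}$ product of loop contributions in Case (C) of Lemma \ref{lem: loopandbridge}; larger symmetries (permuting entire leaves that lie in combinatorially identical positions along the linear backbone of $\trgt(\pi)$) are compensated by analogous factorials in $\mathrm{CF}$ at the vertices where multiple fibers collapse. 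Matching these term by term using the classification shows $\HS(\pi)$ divides what remains after cancelling $\VS(\pi)$.

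The main obstacle is the second step: unlike $\VS(\pi)$, which receives a clean ``leg-by-leg'' matching against $\mathrm{CF}(V)$, the factors of $\HS(\pi)$ arise from automorphisms of the target that mix local Hurwitz data and determinant data in more delicate ways, so a case-by-case argument through Lemma \ref{lem: loopandbridge} (and its generalization beyond the $O_g$ model used in Theorem \ref{thm: nice computation}) is unavoidable. Once this matching is accomplished the integrality of $\mt(\pi)$ follows, and the total sum $\sum_{[\pi]}\mt(\pi)=g^3-g$ computed in Theorem \ref{thm: nice count} provides a consistency check.
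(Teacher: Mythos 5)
Your first step is sound and matches the paper: non-negativity is immediate, and $\VS(\pi)$ cancels leg-by-leg against the combinatorial factors $\mathrm{CF}(V)$. The second step, however, has a genuine gap in two respects. First, you treat $\HS(\pi)$ as a possibly large product of factorials to be matched ``term by term'' against the factors $g\cdot 2^{g-1}$, $(g-1)!$, etc.\ arising in Cases (A)--(C) of Lemma \ref{lem: loopandbridge}. Those factors belong to the \emph{orbit count} in the proof of Theorem \ref{thm: nice computation} (they count how many non-isomorphic covers occur in each case), not to the weight $\varpi^\varnothing_{g,0,\vec{\mu}}(\pi)$ of a single fixed cover, so they are not available to cancel $\HS(\pi)$ inside \eqref{eq: multiplicity}. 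The paper instead identifies $\HS(\pi)=2^N$, where $N$ is the number of leaves of $\trgt(\pi)$ falling under case (ii) of Lemma \ref{lem: first}, and cancels this $2^N$ against the \emph{determinant}: by that same case (ii), the columns of $A_\pi=\ft_{\{1,\dots,n\},\src(\pi)}\circ I_\pi$ indexed by such leaves may be divided by $2$ while preserving integrality. Nothing in your argument plays this role. Second, and more seriously, you never clear the denominators of the local Hurwitz numbers: $H(V)$ can equal $\tfrac{1}{D}$ or $\tfrac{1}{2}$ at a vertex of local degree $D$, and the bulk of the paper's proof is an exhaustive local classification (cases (I)--(III)) showing that these denominators are absorbed partly by $\mathrm{CF}(V)$, partly by the edge-weight product $\prod_{e\in E(\src(\pi))}d_\pi(e)$ along paths of constant weight, and partly by the integrality of the suitably rescaled matrix $A_\pi$. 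Your plan announces ``clearing the internal denominators'' but supplies no mechanism for it, and this is where the actual work of the lemma lies.

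A further obstruction to your route: Lemma \ref{lem: loopandbridge} is stated only for covers with $\src(\pi)=O_g$, whereas Lemma \ref{lem: mult is int} concerns an arbitrary object of $\bbAC_{g,0}(\vec{\mu})$. You acknowledge that a generalization is needed but do not provide one; the correct local inputs, valid for arbitrary top-dimensional covers, are Lemmas \ref{lem: first} and \ref{lem: second}, on which the paper's case analysis is built.
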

\begin{proof}
    The proof of this Lemma is very similar to that of Proposition 4.4.2 of \cite{DARB2}, but adapted to our present case. In contrast to this proposition, we do not compare this multiplicity with the Vargas-Draisma multiplicity (in the pertaining case) of \cite{VargasThesis}. Without loss of generality, we can assume that $\left(\ft_{     \{1,\dots,n\},\src(\pi)}\circ F_\pi\right)$ is invertible (otherwise the multiplicity would readily be $0$). Let $A_\pi$ denote the composition $\left(\ft_{\{1,\dots,n\},\src(\pi)}\circ I_\pi\right)$. First, observe that the product of $\left|\det\left(\ft_{\{1,\dots,n\},\src(\pi)}\circ F_\pi\right)\right|$ with the denominator of $\varpi^\varnothing_{g,0,\vec{\mu}}(\pi)$, give precisely $\left|\det\left(A_\phi\right)\right|$. Separetedly, it is clear that $\HS(\pi) =2^N$ and that $\frac{1}{2^N}\det(A_\pi)$ is (because of case (ii) of Lemma \ref{lem: first}) the determinant of the matrix obtained from $A_\pi$ by dividing the columns corresponding to the leaves of $\trgt(\pi)$ where case (ii) of Lemma \ref{lem: first} happens. Therefore, we just have to show that the following expression is an integer
    \begin{equation}
        \frac{\left(\prod_{e\in E(\src(\pi))}d_\pi(e)\right)\cdot\left(\prod_{V\in V(\src(\pi))} H(V)\cdot \mathrm{CF}(V)\right)}{\VS(\pi)}\label{eq: reduction}
    \end{equation}
    that when multiplied with $\frac{1}{2^N}\left|\det(A_\pi)\right|$ gives actually an integer.
    We first seek to understand the local Hurwitz numbers and for this, we follow an exhaustive classification of all the local possibilities arising at the vertices of $\src(\pi)$ supported on Lemma \ref{lem: second}. Consider a vertex $V\in V\left(\src(\pi)\right)$ with $D=d_{\pi}(V)$, we described all the feasible possibilities for the local behaviour of $\pi$ around $V$ in (I), (II), and (III). These cases are accompanied with several depictions, where the coloring describes the edges that lie in the same fiber.
    \begin{enumerate}
        \item[(I)] If $\legval(\pi(V))=0$, then $\pi(V)$ is incident to three legs and, because of Lemma \ref{lem: second}, there is a unique possibility for $\src(\pi)$ around $V$ that we have depicted in Figure \ref{fig: 3valent case}.
        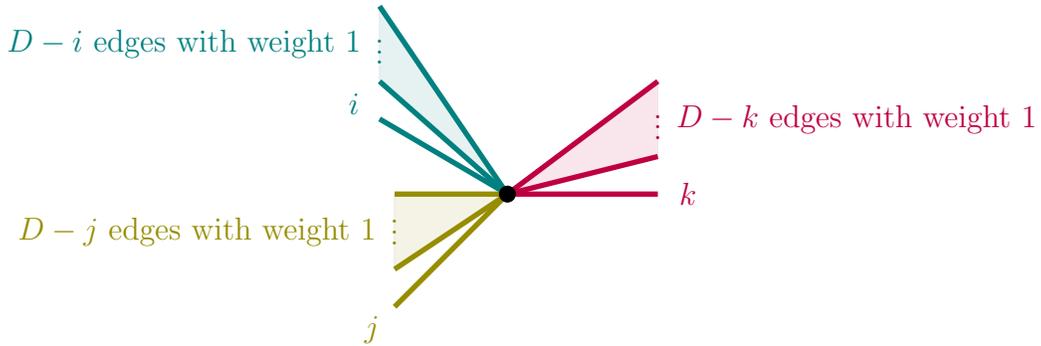
\begin{figure}[ht]
            \centering
            \begin{tikzpicture}
            
                \draw[fill = teal, opacity=0.1,line width=0pt] (0,0)--(-1.7,2.5)--(-1.7,1.5)--(0,0);
                \node[teal] at (-1.7,2) {$\boldsymbol\vdots$};
                \node[teal,anchor=east,align=right] at (-1.8,2) {$D-i$ edges with weight $1$};
                \draw[teal, line width =2pt] (0,0)--(-1.7,2.5);
                \draw[teal, line width =2pt] (0,0)--(-1.7,1.5);
                \draw[teal, line width =2pt] (0,0)--(-1.7,1) node [pos =1.2] {$i$};

                \node[olive] at (-1.5,-0.4) {$\boldsymbol\vdots$};
                \node[olive,anchor=east,align=right] at (-1.6,-0.5) {$D-j$ edges with weight $1$};
                \draw[fill = olive, opacity=0.1,line width=0pt] (0,0)--(-1.5,0)--(-1.5,-1)--(0,0);
                \draw[olive, line width =2pt] (0,0)--(-1.5,0);
                \draw[olive, line width =2pt] (0,0)--(-1.5,-1);
                \draw[olive, line width =2pt] (0,0)--(-1.5,-1.5) node [pos =1.2] {$j$};

                \node[purple] at (2,1) {$\boldsymbol\vdots$};
                \node[purple,anchor=west,align=left] at (2.1,1) {$D-k$ edges with weight $1$};
                \draw[fill = purple, opacity=0.1,line width=0pt] (0,0)--(2,1.5)--(2,0.5)--(0,0);
                \draw[purple, line width =2pt] (0,0)--(2,1.5);
                \draw[purple, line width =2pt] (0,0)--(2,0.5);
                \draw[purple, line width =2pt] (0,0)--(2,0) node [pos =1.2] {$k$};
                \draw[fill = black] (0,0) circle (3pt);
                
            \end{tikzpicture}
            \caption{Unique possiblity of $\src(\pi)$ around $V$ in case (I)}
            \label{fig: 3valent case}
        \end{figure}
        
        Since $r_\pi(V)=0$, necessarily $i+j+k=2D+1$. If one of $i$, $j$, or $k$ is $1$, then the other two must necessarily be equal to $D$ and therefore $H(V) = \frac{1}{D}$. In this case, the accompanying combinatorial factor is killed by $\VS(\pi)$. If $\min\{i,j,k\}>1$, then the local Hurwitz number $H(V)$ is the number of permutations  $\sigma_i$, $\sigma_j$, $\sigma_k$ of $S_D$ with cycle types $(i,1^{D-i})$, $(j,1^{D-j})$, and $(k,1^{D-k})$ respectively, that multiply to the identity and generate a transitive group, divided by $D!$. A direct computation shows that this number is $1$ (else, we refer the reader to case (I) in the proof of Proposition 4.4.2 of \cite{DARB2}). 
        
        \item[(II)] If $\legval(\pi(V))=1$, then we only get the two possibilities depicted in Figures \ref{fig: interesting 2valent case} and \ref{fig: not so interesting 2valent case}. In the second possibility, it is well known that $H(V) = \frac{1}{D}$ and the accompanying combinatorial factor from the {\color{teal} teal edges} is killed by $\VS(\pi)$. In the first possibility, we additionally get the condition that $D=i+j$. In this case, $H(V)$ corresponds to the number of transpositions $\tau$ and permutations $\sigma$ of cycle type $(i)(j)$ of $S_D$, such that $\sigma\tau$ is a $D$-cycle. Again, a direct computation (else the reader is referred to case (II) in the proof of Proposition 4.4.2 of loc. cit.) shows that:
        \begin{itemize}
            \item[(i)] If $i\neq j$, then $H(V)=1$.
            \item[(ii)] If $i=j$, then $H(V)=\frac{1}{2}$.
        \end{itemize}
        In case (ii), the Hurwitz number is cancelled by the accompanying combinatorial factor from the purple edges. In both cases, the accompanying combinatorial factor from the {\color{teal} teal edges} is killed by the $\VS(\pi)$.
        
        \begin{figure}[ht]
        \centering
        \begin{minipage}{0.45\textwidth}
            \centering
            \begin{tikzpicture}
                \node[teal] at (-1.7,2) {$\boldsymbol\vdots$};
                \node[teal,anchor=east,align=right] at (-1.8,2) {$D-2$ edges\\with weight $1$};
                \draw[fill = teal, opacity=0.1,line width=0pt] (0,0)--(-1.7,2.5)--(-1.7,1.5)--(0,0);
                \draw[teal, line width =2pt] (0,0)--(-1.7,2.5);
                \draw[teal, line width =2pt] (0,0)--(-1.7,1.5);
                \draw[teal, line width =2pt] (0,0)--(-1.7,1) node [pos =1.2] {$2$};

                \draw[olive, line width =2pt] (0,0)--(-1.5,-1) node [pos =1.2] {$D$};
                
                \draw[purple, line width =2pt] (0,0)--(2,0.5) node [pos=1.2] {$i$};
                \draw[purple, line width =2pt] (0,0)--(2,-0.5) node [pos =1.2] {$j$};
                \draw[fill = black] (0,0) circle (3pt);
                
            \end{tikzpicture}
            \caption{First possiblity of $\src(\pi)$ around $V$ in case (II)}
            \label{fig: interesting 2valent case} 
            \end{minipage}%
            \begin{minipage}{0.45\textwidth}
            \centering
            \begin{tikzpicture}
                \node[teal] at (-1.7,1.5) {$\boldsymbol\vdots$};
                \node[teal,anchor=east,align=right] at (-1.8,1.5) {$D$ edges\\with weight $1$};\draw[fill = teal, opacity=0.1,line width=0pt] (0,0)--(-1.7,2)--(-1.7,1)--(0,0);
                \draw[teal, line width =2pt] (0,0)--(-1.7,2);
                \draw[teal, line width =2pt] (0,0)--(-1.7,1);

                \draw[fill = black] (0,0) circle (3pt);

                \draw[olive, line width =2pt] (0,0)--(-1.5,-1) node [pos =1.2] {$D$};
                
                \draw[purple, line width =2pt] (0,0)--(2,0) node [pos=1.2] {$D$};
                
            \end{tikzpicture}
            \caption{Second possibility of $\src(\pi)$ around $V$ in case (II)}
            \label{fig: not so interesting 2valent case}
            \end{minipage}
        \end{figure}
        \item[(III)] If $\legval(\pi(V))=2$, then either $\val(V)=3$, $\val(V)=4$, or $\val(V) = g+2$. This is the first case different from the proof of Proposition 4.4.2 of loc. cit.. In this case we get the three possibilities depicted in Figures \ref{fig: first 1valent case} and \ref{fig: second 1valent case}. In the first we get $H(V)=1$, in the second we get $H(V)=\frac{1}{2}$ and this is killed by the accompanying combinatorial factor from the {\color{purple} purple edges}, and in the third we are basically in case (II) from above.
        \begin{figure}[ht]
            \centering
            \begin{minipage}{0.5\textwidth}
            \begin{tikzpicture}
                \draw[teal, line width =2pt] (0,0)--(-1.7,1) node [pos =1.2] {$1$};
                
                \draw[olive, line width =2pt] (0,0)--(-1.5,-1.5) node [pos =1.2] {$1$};
                
                \draw[purple, line width =2pt] (0,0)--(2,0) node [pos =1.2] {$1$};
                \draw[fill = black] (0,0) circle (3pt);
                
            \end{tikzpicture}
            \caption{Case of $\src(\pi)$ around $3$-valent $V$}
            \label{fig: first 1valent case}
            \end{minipage}%
            \begin{minipage}{0.33\textwidth}
            \begin{tikzpicture}
                \draw[teal, line width =2pt] (0,0)--(-1.7,1) node [pos =1.2] {$2$};

                \draw[olive, line width =2pt] (0,0)--(-1.5,-1.5) node [pos =1.2] {$2$};

                \draw[purple, line width =2pt] (0,0)--(2,-0.5) node [pos=1.2]{$1$};
                \draw[purple, line width =2pt] (0,0)--(2,0.5) node [pos =1.2] {$1$};
                \draw[fill = black] (0,0) circle (3pt);
                
            \end{tikzpicture}
            \caption{Possibility of $\src(\pi)$ around $4$-valent $V$}
            \label{fig: second 1valent case}
            \end{minipage}
            \end{figure}

            \begin{figure}[!ht]
                \centering
            \begin{minipage}{0.5\textwidth}
            \begin{tikzpicture}
               \node[teal] at (-1.7,2) {$\boldsymbol\vdots$};
                \node[teal,anchor=east,align=right] at (-1.8,2) {$g-2$ edges\\with weight $1$};
                \draw[fill = teal, opacity=0.1,line width=0pt] (0,0)--(-1.7,2.5)--(-1.7,1.5)--(0,0);
                \draw[teal, line width =2pt] (0,0)--(-1.7,2.5);
                \draw[teal, line width =2pt] (0,0)--(-1.7,1.5);
                \draw[teal, line width =2pt] (0,0)--(-1.7,1) node [pos =1.2] {$2$};
                
                \draw[olive, line width =2pt] (0,0)--(-1.5,-1.5) node [pos =1.2] {$g$};

                \draw[purple, line width =2pt] (0,0)--(2,-0.5) node [pos=1.2]{$\lambda_1$};
                \draw[purple, line width =2pt] (0,0)--(2,0.5) node [pos =1.2] {$\lambda_2$};
                \draw[fill = black] (0,0) circle (3pt);
                \node at (1,-1) {$\lambda_1+\lambda_2=g$};
            \end{tikzpicture}
            \caption{Possibility of $\src(\pi)$ around $(g+2)$-valent $V$}
            \label{fig: third 1valent case}
            \end{minipage}
        \end{figure}
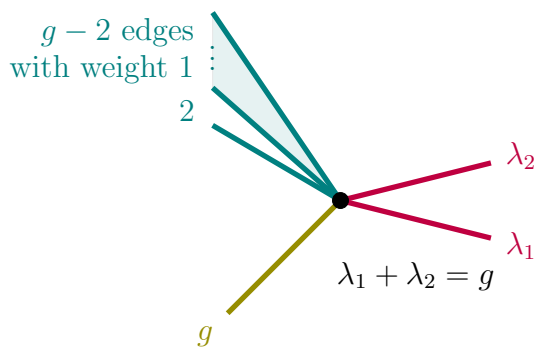
    \end{enumerate}
   From before, the single instance where we have not explained the canceling of the local Hurwitz number is the second possibility of case (II). In this case, the adjacent edges add to a single edge of $\ft_{\{1,\dots,n\}}$. More precisely, if $e\in E(\ft_{\{1,\dots,n\}}\src(\pi))$, then the edges of $\src(\pi)$ that add (Definition \ref{defi: add}) to $e$ form a path of $\src(\pi)$. In this sense, edges of $\ft_{\{1,\dots,n\}}\src(\pi)$ are given by paths of $\src(\pi)$. We further segregate these paths into ones where the weights of the edges are always the same. All instances of the second possibility of case (II) happen as vertices in the interior of a path like these, and we can focus on paths of edges of $\src(\pi)$ where all the edges have weight $D$. In these cases, the factors accompanying combinatorial factors (which are all just $D!$) coming from the {\color{teal} teal edges (or legs)} are killed by the $\VS(\pi)$. All the weights of the edges of $\src(\pi)$ in this path contribute a factor of $D$, and:
   \begin{itemize}
       \item If the path does not go through a leaf, then all but one are killed by the local Hurwitz numbers. 
       \item If the path happens to go through a vertex whose image has leg-valency $2$, then there is no additional contribution (as we have readily explained above) from this vertex and all but two edges are killed by the local Hurwitz number.
   \end{itemize} 
   Here we observe that each of these paths will contribute at least as much as the weight of the edges therein. By definition of the matrix $A_\pi$, these remaining factors when multiplied with $A_\pi$ will actually give an integral matrix and therefore the product we are after will be an integer.\\
   To finish the proof it is only necessary to show that the contribution of the extremal vertices of the paths given is trivial or cancels with what remains of $\VS(\pi)$. Due to Lemma \ref{lem: second}, these extremal vertices are only of the following form:
    \begin{itemize}
        \item The local picture is the unique possibility of case (I), where every edge except the three edges with weights $i$, $j$, and $k$ is expunged after forgetting the marked legs. If $\min\{i,j,k\}>1$, then the local Hurwitz number is $1$ and the accompanying combinatorial factors are cancelled by the remaining factors of $\VS(\pi)$. If (without loss of generality) $i=1$, then the local Hurwitz number is $\frac{1}{D}$ and the accompanying combinatorial factors give $D!$, but in this case $\VS(\pi)$ is precisely $(D-1)!$ coming from the permutations of the {\color{teal}teal edges} that are erased after forgetting the marking. So, the contribution of the extremal vertices having this form is always $1$.
        
        \item The local picture is the first possibility of case (II), where the {\color{teal} teal edges} are actually marked legs, and none of the {\color{purple} purple edges} are erased after forgetting the marking. In both instances, we have readily explained in case (II) what are the contributions and how the accompanying combinatorial factors get killed by $\VS(\pi)$.

        \item The local picture is the third possibility of case (III) with $\lambda_1\neq \lambda_2$, but, just as before, we have readily explained the corresponding behaviour.
        \end{itemize}
        
        Our comprehensive analysis of the local structure explains also how the $\VS(\pi)$ cancels with most of the accompanying combinatorial factors, how the remaining accompanying combinatorial factors cancel with some rational Hurwitz numbers, and how the remaining rational Hurwitz numbers are killed by several weights of the edges. From this exhaustive process, it follows that the multiplicity is necessarily an integer.
\end{proof}

\end{document}